\numberwithin{equation}{section}
\newtheorem{theorem}{Theorem}[section]
\newtheorem{lemma}[theorem]{Lemma}
\theoremstyle{definition}
\newtheorem{definition}[theorem]{Definition}
\theoremstyle{remark}
\newtheorem{remark}[theorem]{Remark}
\DeclareMathOperator{\sgn}{sign}
\begin{document}

\title[Zakharov--Kuznetsov Equation]
{Initial-Boundary Value Problems in a Rectangle for Two-Dimensional Zakharov--Kuznetsov Equation}

\author[A.V.~Faminskii]{Andrei~V.~Faminskii}

\thanks{The publication was financially supported by the Ministry of Education and Science of the Russian Federation (the Agreement 02.A03.21.0008 and the Project 1.962.2017/PCh)}

\address{RUDN University, 6 Miklukho--Maklaya Street, Moscow, 117198, Russia}

\email{afaminskii@sci.pfu.edu.ru}

\subjclass[2010]{Primary 35Q53; Secondary 35B40}

\keywords{Zakharov--Kuznetsov equation, initial-boundary value problem, global solution, decay, controllability}

\date{}

\begin{abstract}
Initial-boundary value problems in a bounded rectangle with different types of boundary conditions for two-dimensional Zakharov--Kuznetsov equation are considered. Results on global well-posedness in the classes of weak and regular solution are established. As applications of the developed technique results on boundary controllability and long-time decay of weak  solutions are also obtained.
\end{abstract}

\maketitle

\section{Introduction. Description of main results}\label{S1}
The two dimensional Zakharov--Kuznetsov equation (ZK)
\begin{equation}\label{1.1}
u_t+bu_x+u_{xxx}+u_{xyy}+uu_x=f(t,x,y)
\end{equation}
($b$ is a real constant) is one of the variants of multi-dimensional generalizations of Korteweg--de~Vries equation (KdV) 
$u_t+bu_x+u_{xxx}+uu_x=f(t,x)$.  For the first time it was derived in the three-dimensional case in \cite{ZK} for description of  ion-acoustic waves in magnetized plasma. The equation, considered is the present paper, is known as a model of two-dimensional nonlinear waves in dispersive media propagating in one preassigned ($x$) direction with deformations in the transverse ($y$) direction. A rigorous derivation of the ZK model can be found, for example, in \cite{H-K, LLS}.  

From the point of view of solubility and well-posedness the most significant results for ZK equation and its generalizations were obtained for the initial value problem. In the two-dimensional case the corresponding results in different functional spaces can be found in \cite{S, F89, F95, BL, LP09, LP11, RV, FLP, BJM, GH, MP, FP, G}. For initial-boundary value problems such a theory is most developed for domains, where the variable $y$ is considered in the whole line, (\cite{F02, F07-1, FB08, F08, ST, F12, DL}). 

Initial-boundary value problems posed on domains, where the variable $y$ is considered on a bounded interval, are studied less, although from the physical point of view they seem at least the same important.
Certain technique developed for the case $y\in\mathbb R$ (especially related to the investigation of the corresponding linear equation) up to this moment is extended to the case of bounded $y$ only partially. An initial-boundary value problem in a strip $\mathbb R\times (0,L)$ with periodic boundary conditions was considered in \cite{LPS} for ZK equation and local well-posedness result was established in the spaces $H^s$ for $s>3/2$. This result was improved in \cite{MP} where $s\geq 1$, in addition, in the space $H^1$ appropriate conservation laws provided global well-posedness. 
Initial-boundary value problems in such a strip with homogeneous boundary conditions of different types -- Dirichlet, Neumann or periodic -- were considered in \cite{BF13, F15-2} and results on global well-posedness in classes of weak solutions with power and exponential weights at $+\infty$ were established. Global well-posedness results for ZK equation with certain parabolic regularization also for the initial-boundary value problem in a strip $\mathbb R\times (0,L)$ with homogeneous Dirichlet boundary conditions can be found in \cite{F15-1, F15-2, L15, L16}. 

Similar results on global well-posedness in weighted spaces for initial-boundary value problems in a half-strip $\mathbb R_+\times (0,L)$ were obtained in \cite{LT, L13, F17}. 

Initial-boundary value problems in a bounded rectangle were studied in \cite{STW, DL}. In \cite{STW} either homogeneous Dirichlet or periodic boundary conditions with respect to $y$ were considered and results on global existence and uniqueness of weak solutions were established. In \cite{DL} similar results in more regular classes for homogeneous Dirichlet boundary conditions were obtained. In both papers boundary conditions with respect to $x$ were homogeneous.

In the present paper we consider initial-boundary value problems in a domain $Q_T=(0,T)\times\Omega$, where $\Omega=(0,R)\times (0,L)=\{(x,y): 0<x<R, 0<y<L\}$ is a bounded rectangle of given length $R$ and width $L$, $T>0$ is arbitrary, for equation \eqref{1.1}
with an initial condition
\begin{equation}\label{1.2}
u(0,x,y)=u_0(x,y),\qquad (x,y)\in\Omega,
\end{equation}
boundary conditions for $(t,y)\in B_{T}=(0,T)\times (0,L)$
\begin{equation}\label{1.3}
u(t,0,y)=\mu_0(t,y), \quad u(t,R,y)=\nu_0(t,y),\quad u_x(t,R,y)=\nu_1(t,y)
\end{equation}
and boundary conditions for $(t,x)\in (0,T)\times (0,R)$ of one of the following four types: 
\begin{equation}\label{1.4}
\begin{split}
\mbox{whether}\qquad &a)\mbox{ } u(t,x,0)=u(t,x,L)=0,\\
\mbox{or}\qquad &b)\mbox{ } u_y(t,x,0)=u_y(t,x,L)=0,\\
\mbox{or}\qquad &c)\mbox{ } u(t,x,0)=u_y(t,x,L)=0,\\
\mbox{or}\qquad &d)\mbox{ } u \mbox{ is an  $L$-periodic function with respect to $y$.}
\end{split}
\end{equation}
We use the notation "problem \eqref{1.1}--\eqref{1.4}" for each of these four cases.

The main results consist of theorems on global well-posedness in classes of weak and regular solutions. 
Besides that, certain results on large-time decay of small solutions and boundary controllability, when $\mu_0=\nu_0\equiv 0$, $f\equiv 0$, are established.

In what follows (unless stated otherwise) $j$, $k$, $l$, $m$, $n$ mean non-negative integers, $p\in [1,+\infty]$, $s\in\mathbb R$. Let $[s]$ be the integer part of $s$ ($s-[s]\in [0,1)$). For any multi-index $\alpha=(\alpha_1,\alpha_2)$ let $\partial^\alpha =\partial^{\alpha_1}_{x}\partial^{\alpha_2}_{y}$, let
\begin{equation*}
|D^k\varphi|=\Bigl(\sum_{|\alpha|\leq k}(\partial^\alpha \varphi)^2\Bigr)^{1/2}, \qquad
|D\varphi|=|D^1\varphi|.
\end{equation*}
Let $L_p=L_p(\Omega)$, $W_p^k=W_p^k(\Omega)$, $H^s=H^s(\Omega)$.

Introduce special function spaces taking into account boundary conditions \eqref{1.4}. Let 
$\Sigma= \mathbb R\times (0,L)$, 
$\widetilde{\EuScript S}(\overline{\Sigma})$ be a space of infinitely smooth on $\overline{\Sigma}$ functions $\varphi(x,y)$ such that $\displaystyle{(1+|x|)^n|\partial^\alpha\varphi(x,y)|\leq c(n,\alpha)}$ for any $n$, multi-index $\alpha$, $(x,y)\in \overline{\Sigma}$ and $\partial_y^{2m}\varphi\big|_{y=0} =\partial_y^{2m}\varphi\big|_{y=L}=0$ in the case a), $\partial_y^{2m+1}\varphi\big|_{y=0} =\partial_y^{2m+1}\varphi\big|_{y=L}=0$ in the case b), $\partial_y^{2m}\varphi\big|_{y=0} =\partial_y^{2m+1}\varphi\big|_{y=L}=0$ in the case c), $\partial_y^{m}\varphi\big|_{y=0} =\partial_y^{m}\varphi\big|_{y=L}$ in the case d) for any $m$.

Let $\widetilde H^s(\Sigma)$ be the closure of $\widetilde{\EuScript S}(\overline{\Sigma})$ in the norm $H^s(\Sigma)$ and $\widetilde H^s(I\times (0,L))$ be the restriction of $\widetilde H^s(\Sigma)$ on $I\times (0,L)$ for any interval $I\subset \mathbb R$, $\widetilde H^s =\widetilde H^s(\Omega)$.

It is easy to see, that $\widetilde H^0=L_2$; $\widetilde H^s=H^s$ if $s<0$; for $j \geq 1$ in the case a) $\widetilde H^j=\{\varphi\in H^j: \partial_y^{2m}\varphi|_{y=0}=\partial_y^{2m}\varphi|_{y=L}=0, \ 2m<j\}$, in the case b) $\widetilde H^j=\{\varphi\in H^j: \partial_y^{2m+1}\varphi|_{y=0}=\partial_y^{2m+1}\varphi|_{y=L}=0,\ 2m+1<j\}$,  in the case d) $\widetilde H^j=\{\varphi\in H^j: \partial_y^{m}\varphi|_{y=0}=\partial_y^{m}\varphi|_{y=L}, \ m<j\}$.  

We also use an anisotropic Sobolev space $\widetilde H^{(0,k)}$ which is defined as the restriction on $\Omega$ of a space $\widetilde H^{(0,k)}(\Sigma)$, where the last space is the closure of $\widetilde{\EuScript S}(\overline{\Sigma})$ in the norm $\sum\limits_{m=0}^k \|\partial_y^m\varphi\|_{L_2(\Sigma)}$.

We construct solutions to the considered problems in spaces $X^k(Q_T)$ for $k=0$ and $k=3$, consisting of functions $u(t,x,y)$, such that 
\begin{equation}\label{1.5}
\partial_t^j u\in C([0,T]; \widetilde H^{k-3j})\cap L_2(0,T;\widetilde H^{k-3j+1})
\end{equation}
if $k-3j\geq 0$,
let $X(Q_T)=X^{0}(Q_T)$. 

For description of properties of the boundary data introduce anisotropic functional spaces. Let $B=\mathbb R^t \times (0,L)$. Define the functional space $\widetilde{\EuScript S}(\overline{B})$ similarly to $\widetilde{\EuScript S}(\overline{\Sigma})$, where the variable $x$ is substituted by $t$. Let $\widetilde H^{s/3,s}(B)$ be the closure of $\widetilde{\EuScript S}(\overline{B})$ in the norm $H^{s/3,s}(B)$. 

More exactly, let $\psi_l(y)$, $l=1,2\dots$, be the orthonormal in $L_2(0,L)$ system of the eigenfunctions for the operator $(-\psi'')$ on the segment $[0,L]$ with corresponding boundary conditions   $\psi(0)=\psi(L)=0$ in the case a), $\psi'(0)=\psi'(L)=0$ in the case b), $\psi(0)=\psi'(L)=0$ in the case c), $\psi(0)=\psi(L),\psi'(0)=\psi'(L)$ in the case d), $\lambda_l$ be the corresponding eigenvalues. Such systems are well-known and are written in trigonometric functions.

For any  $\mu\in \widetilde{\EuScript S}(\overline{B})$, $\theta\in\mathbb R$ and $l$ let
\begin{equation}\label{1.6}
\widehat\mu(\theta,l) \equiv \iint_B e^{-i\theta t}\psi_l(y)\mu(t,y)\,dtdy.
\end{equation}
Then the norm in $H^{s/3,s}(B)$ is defined as $\displaystyle\Bigl(\sum\limits_{l=1}^{+\infty} 
\bigl\| (|\theta|^{2/3}+l^2)^{s/2}\widehat\mu(\theta,l)\bigr\|_{L_2(\mathbb R^\theta)}^2\Bigr)^{1/2}$ and the norm in $H^{s/3,s}(I\times (0,L))$ for any interval $I\subset \mathbb R$ as the restriction norm. 

The use of these norm is justified by the following fact. Let $v(t,x,y)$ be the appropriate solution to the initial value problem
$$
v_t+v_{xxx}+v_{xyy}=0,\qquad v\big|_{t=0}=v_0.
$$
Then according to \cite{F08} uniformly with respect to $x\in \mathbb R$
\begin{equation}\label{1.7}
\bigl\|D_t^{1/3}v\bigr\|_{H_{t,y}^{s/3,s}(\mathbb R^2)}^2+
\bigl\|\partial_x v\bigr\|_{H_{t,y}^{s/3,s}(\mathbb R^2)}^2+
\bigl\|\partial_y v\bigr\|_{H_{t,y}^{s/3,s}(\mathbb R^2)}^2
\sim \|v_0\|_{H^s(\mathbb R^2)}^2.
\end{equation}
%(here $D^\alpha$ denotes the Riesz potential of the order $-\alpha$).

Introduce the notion of weak solutions to the considered problems.

\begin{definition}\label{D1.1}
Let $u_0\in L_2$, $\mu_0,\nu_0,\nu_1\in L_2(B_T)$, $f\in L_1(0,T;L_2)$. A function $u\in L_\infty(0,T;L_2)$ is called a generalized solution to problem \eqref{1.1}--\eqref{1.4} if for any function $\phi\in L_2(0,T;\widetilde H^2)$, such that $\phi_t, \phi_{xxx}, \phi_{xyy}\in L_2(Q_T)$, $\phi\big|_{t=T}\equiv 0$, $\phi\big|_{x=0} =\phi_x\big|_{x=0} =\phi\big|_{x=R}\equiv 0$, the following equality holds:
\begin{multline}\label{1.8}
\iiint_{Q_T}\Bigl[u(\phi_t+b\phi_x+\phi_{xxx}+\phi_{xyy}) +\frac 12 u^2 \phi_x +f\phi\Bigr]\,dxdydt 
+\iint_{\Omega} u_0\phi\big|_{t=0}\,dxdy \\+
\iint_{B_T} \Bigl[\mu_0\phi_{xx}\big|_{x=0}-\nu_0\phi_{xx}\big|_{x=R}+
\nu_1\phi_x\big|_{x=R}\Bigr]\,dydt =0.
\end{multline}
\end{definition}

\begin{remark}\label{R1.1}
Note that the integrals in \eqref{1.8} are well defined (in particular, since $\phi_x\in L_2(0,T;H^2)\subset L_2(0,T;L_\infty)$).
\end{remark}

Now we can formulate the main results of the paper concerning  well-posedness, which means existence, uniqueness of solutions and Lipschitz continuity of the map $(u_0,\mu_0,\nu_0,\nu_1,f)\mapsto u$ in the corresponding norms on any ball in the space of the input data.

\begin{theorem}\label{T1.1}
Let $u_0\in L_2$, $f\in L_1(0,T; L_2)$ for certain $T>0$, $\mu_0,\nu_0\in \widetilde H^{s/3,s}(B_T)$ for certain $s>3/2$, $\nu_1\in L_2(B_T)$. Then problem \eqref{1.1}--\eqref{1.4} is well-posed in the space $X(Q_T)$.
\end{theorem}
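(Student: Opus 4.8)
The plan is to reduce the problem to one with homogeneous $x$-boundary data, solve the resulting linear problem through energy-type a priori estimates combined with a regularization, and then absorb the nonlinearity by a contraction argument that is made global via an a priori $L_2$-bound.

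First I would construct a boundary carrier $\Phi\in X(Q_T)$ with $\Phi\big|_{x=0}=\mu_0$, $\Phi\big|_{x=R}=\nu_0$, $\Phi_x\big|_{x=R}=\nu_1$ and $\|\Phi\|_{X(Q_T)}\le c\bigl(\|\mu_0\|_{\widetilde H^{s/3,s}(B_T)}+\|\nu_0\|_{\widetilde H^{s/3,s}(B_T)}+\|\nu_1\|_{L_2(B_T)}\bigr)$. This is precisely where the hypotheses $\mu_0,\nu_0\in\widetilde H^{s/3,s}(B_T)$ with $s>3/2$ enter: by the trace equivalence \eqref{1.7} for the free evolution $v_t+v_{xxx}+v_{xyy}=0$, data in the anisotropic class $H^{s/3,s}$ on a vertical section correspond, uniformly in $x$, to solutions whose $x$- and $y$-derivatives and fractional time derivative lie in the same class. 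Solving this linear equation from each lateral boundary (choosing the branch decaying in the appropriate $x$-direction), cutting off smoothly in $x$, and adding an explicit correction carrying the derivative datum $\nu_1$ at $x=R$, I would assemble $\Phi$ and control its $X(Q_T)$-norm by the data. Writing $u=v+\Phi$ then transforms \eqref{1.1}--\eqref{1.4} into a problem for $v$ with homogeneous $x$-conditions, modified right-hand side $f-(\Phi_t+b\Phi_x+\Phi_{xxx}+\Phi_{xyy})\in L_1(0,T;L_2)$ and quadratic term $vv_x+\partial_x(v\Phi)$.

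Next I would establish the linear theory for $v$ by two a priori estimates, obtained by testing the equation against $2v$ and against $2\rho(x)v$ with a bounded increasing weight $\rho$. The first, using the homogeneous conditions \eqref{1.3} together with the $y$-conditions \eqref{1.4} (which annihilate all $y$-boundary contributions), produces the dissipative trace term $\frac12\int_0^L v_x^2\big|_{x=0}\,dy$ and hence $L_\infty(0,T;L_2)$ control plus a bound on $v_x\big|_{x=0}$ in $L_2(B_T)$; the second yields the interior smoothing $v\in L_2(0,T;\widetilde H^1)$. For the full linear problem I would introduce a parabolic regularization, solve the regularized problem by a standard scheme, derive the above estimates uniformly in the regularization parameter, and pass to the limit, obtaining existence in $X(Q_T)$ together with the corresponding linear continuity bound. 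To treat the nonlinearity I first note that in the basic $L_2$-estimate the genuinely nonlinear contribution $\int 2v\,vv_x=\frac23\int_0^L[v^3]_{x=0}^{x=R}\,dy$ vanishes by the homogeneous conditions, while the cross term reduces after integration by parts to $\int v^2\Phi_x$, controlled by the regularity of $\Phi$; this makes the $L_2$-bound global, so the solution cannot blow up in finite time. Local solvability I would obtain by a contraction in a ball of $X(Q_{T'})$ for small $T'$: the map sending $v$ to the solution of the linear problem with source $\partial_x(\frac12 v^2+v\Phi)$ is a contraction because, by the two-dimensional Ladyzhenskaya inequality $\|w\|_{L_4}^2\le c\|w\|_{L_2}\|w\|_{H^1}$, the smoothing $v\in L_2(0,T;\widetilde H^1)$ embeds $X(Q_{T'})$ into $L_4(Q_{T'})$ and renders the quadratic term a small perturbation for $T'$ small; the uniform global $L_2$-bound then lets one take $T'$ uniform and iterate up to the arbitrary $T$. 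Uniqueness and Lipschitz dependence follow from the same mechanism applied to a difference $w=u_1-u_2$, whose quadratic term $\frac12\partial_x\bigl(w(u_1+u_2)\bigr)$ is estimated by $\|w_x\|_{L_2}\|w\|_{L_4}\|u_1+u_2\|_{L_4}$ and absorbed into the smoothing via Gronwall's inequality.

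The main obstacle I expect is twofold. Constructing the carrier $\Phi$ in the correct anisotropic spaces — reconciling the three $x$-conditions with the KdV-type asymmetry (one datum at $x=0$, two at $x=R$) while keeping the $y$-structure compatible with each of the four cases in \eqref{1.4} — is the principal technical hurdle, and is exactly what the trace theory \eqref{1.7} is designed to handle. Closing the nonlinear estimates in the weighted $L_2(0,T;\widetilde H^1)$ norm is the second delicate point, since in two dimensions $vv_x$ is only borderline controllable: one must absorb it into the smoothing gain with a small constant, using $\|v\|_{L_3}^3\le c\|v\|_{L_2}^2\|v\|_{H^1}$ and Young's inequality, rather than estimate it crudely.
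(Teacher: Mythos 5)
Your overall architecture (lift the lateral data, energy identities with weights $1$ and $1+x$, Ladyzhenskaya-type interpolation to handle the cubic term, Gronwall for uniqueness and Lipschitz dependence) matches the paper's, but two steps as you describe them would fail. First, you put $\nu_1$ into the boundary carrier $\Phi$ and then claim $f-(\Phi_t+b\Phi_x+\Phi_{xxx}+\Phi_{xyy})\in L_1(0,T;L_2)$. Since $\nu_1$ is only in $L_2(B_T)$, any ``explicit correction'' with $\Phi_x\big|_{x=R}=\nu_1$ (say $(x-R)\eta\,\nu_1$, or a cut-off boundary potential of type \eqref{2.22}) produces residual terms involving $\nu_{1t}$, $\nu_{1yy}$, or commutators with $\partial_x^2,\partial_x^3$ of the potential, none of which lie in $L_1(0,T;L_2)$; moreover such a $\Phi_x$ is at best of class $C_b([0,R];L_2(B_T))$, so the cross term $\iint v^2\Phi_x\,dxdy$ in the energy identity cannot be bounded by $\|\Phi_x\|_{L_\infty}\|v\|_{L_2}^2$. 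The paper deliberately lifts only $\mu_0,\nu_0$ (for which $s>3/2$ yields $\psi\in L_2(0,T;W^1_\infty)$, see \eqref{3.4}; this, not the trace equivalence \eqref{1.7} per se, is where $s>3/2$ is actually consumed) and keeps $\nu_1$ as a genuine boundary datum, which then enters the basic identity \eqref{2.47} solely through the harmless trace term $\rho(R)\iint_{B_t}\nu_1^2\,dyd\tau$.

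Second, your local contraction does not close for arbitrary $L_2$ data: the embedding constant in $\|v\|_{L_4(Q_{T'})}\le c\|v\|_{L_\infty(0,T';L_2)}^{1/2}\|v\|_{L_2(0,T';H^1)}^{1/2}$ does not tend to zero as $T'\to 0$ uniformly over a ball of $X(Q_{T'})$, so the Lipschitz constant of your map on a ball of radius $M$ is of order $cM$, not $o(1)$ in $T'$; the quadratic term is therefore not ``a small perturbation for $T'$ small.'' This is precisely why the paper first truncates the nonlinearity to $g_h$ with $|g_h'|\le 2/h$ --- then $\|g_h(v)-g_h(\tilde v)\|_{L_2(Q_{t_0})}\le c\,t_0^{1/2}\,h^{-1}\|v-\tilde v\|_{C([0,t_0];L_2)}$ does give smallness for each fixed $h$ --- proves the $h$-uniform bounds \eqref{3.12}, \eqref{3.16}, \eqref{3.17} using \eqref{1.15}, and passes to the limit $h\to 0$ by compactness, recovering $C([0,T];L_2)$ continuity a posteriori from Lemma~\ref{L2.11} with $f_1\equiv U^2/2+\psi U$. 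Your global a priori $L_2$ bound is correct but cannot rescue a map that is not contractive on any single time step; you must either adopt the truncation/compactness scheme or restructure the fixed point around the linear solution with a norm that genuinely shrinks with $T'$.
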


\begin{remark}\label{R1.2}
In the cases a) and d) for $\mu_0=\nu_0=\nu_1\equiv 0$ similar result was established in \cite{STW}. In the last paper certain properties of traces of $u_x$ with respect to $x$ were also obtained.
\end{remark}

\begin{theorem}\label{T1.2}
Let $u_0\in \widetilde H^3$, $f\in C([0,T];L_2)\cap L_2(0,T; \widetilde H^{(0,2)})$, $f_t\in L_1(0,T;H^{-1})$ for certain $T>0$, $\mu_0,\nu_0\in \widetilde H^{4/3,4}(B_T)$, $\nu_1\in \widetilde H^{1,3}(B_T)$, $\mu_0(0,y)\equiv u_0(0,y)$, $\nu_0(0,y)\equiv u_0(R,y)$, $\nu_1(0,y)\equiv u_{0x}(R,y)$. Then problem \eqref{1.1}--\eqref{1.4} is well-posed in the space $X^3(Q_T)$.
\end{theorem}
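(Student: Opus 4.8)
The plan is to build on the weak theory of Theorem~\ref{T1.1} and bootstrap to the regular class $X^3(Q_T)$ by differentiating the equation once in $t$ and then recovering the spatial smoothness from a stationary, elliptic-type relation. First I would remove the inhomogeneous data $\mu_0,\nu_0,\nu_1$: using the trace equivalence \eqref{1.7} and the anisotropic spaces $\widetilde H^{s/3,s}(B_T)$, I would construct an auxiliary function $\Psi\in X^3(Q_T)$ whose $x$-traces reproduce $\mu_0,\nu_0,\nu_1$ and whose initial value $\Psi\big|_{t=0}$ agrees with $u_0$ to the order demanded by the compatibility hypotheses $\mu_0(0,\cdot)\equiv u_0(0,\cdot)$, $\nu_0(0,\cdot)\equiv u_0(R,\cdot)$, $\nu_1(0,\cdot)\equiv u_{0x}(R,\cdot)$. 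The prescribed regularity $\mu_0,\nu_0\in\widetilde H^{4/3,4}(B_T)$, $\nu_1\in\widetilde H^{1,3}(B_T)$ is exactly what \eqref{1.7} requires of the traces of an $X^3$-function. Then $w=u-\Psi$ solves the same equation with homogeneous conditions \eqref{1.3}--\eqref{1.4}, a modified right-hand side $g=f-\Psi_t-b\Psi_x-\Psi_{xxx}-\Psi_{xyy}-\tfrac12\partial_x(\Psi^2)-\partial_x(w\Psi)$ and initial datum $w_0=u_0-\Psi\big|_{t=0}\in\widetilde H^3$ with vanishing relevant traces, so one may henceforth take $\mu_0=\nu_0=\nu_1\equiv0$.

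For the linear problem $w_t+bw_x+w_{xxx}+w_{xyy}=g$ with homogeneous conditions I would construct solutions by a parabolic (or Galerkin-in-$y$, using the basis $\psi_l$) regularization and pass to the limit on energy estimates. The basic $L_2$ multiplier gives $w\in C([0,T];L_2)\cap L_2(0,T;\widetilde H^1)$, the $X^0$ part. To reach $X^3$ I would differentiate in $t$: the function $w_t$ solves a problem of the same type, so the same $L_2$ estimate yields $w_t\in C([0,T];L_2)\cap L_2(0,T;\widetilde H^1)$, provided $w_t\big|_{t=0}=g\big|_{t=0}-bw_{0x}-w_{0xxx}-w_{0xyy}\in L_2$, which holds since $w_0\in\widetilde H^3$ and $g\big|_{t=0}\in L_2$. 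Finally I would recover the spatial derivatives from the stationary relation $w_{xxx}+w_{xyy}=g-w_t-bw_x$: expanding in the eigenbasis $\psi_l$ turns this into third-order ordinary differential equations $\varphi'''-\lambda_l\varphi'=h_l$ in $x$ subject to $\varphi(0)=\varphi(R)=\varphi'(R)=0$, each uniquely solvable with an $H^3_x$-bound, and summation in $l$ upgrades an $L_2$ right-hand side to $w\in\widetilde H^3$ and an $\widetilde H^1$ right-hand side to $w\in\widetilde H^4$. This closes the linear estimate in $X^3(Q_T)$.

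Reinstating the nonlinearity means carrying $uu_x$ through the two energy identities above. In the $t$-differentiated estimate one meets $(uu_x)_t=u_tu_x+uu_{xt}$, and in the stationary recovery one needs $\partial^\alpha(uu_x)$ for $|\alpha|\le2$ in $L_2$. I expect this to be the \emph{main obstacle}: each such term must be dominated, via Gagliardo--Nirenberg and Agmon-type multiplicative inequalities on $\Omega$, by the top-order $X^3$ norm times the lower norms already controlled through Theorem~\ref{T1.1}, so that a Gronwall argument closes the bound. One must also track the boundary integrals produced when the operator $\partial_x(\partial_x^2+\partial_y^2)$ is integrated by parts; the conditions \eqref{1.3}--\eqref{1.4} (in particular the two relations $u=u_x=0$ at $x=R$ against the single relation $u=0$ at $x=0$, together with each $y$-type in \eqref{1.4}) are arranged precisely so that these integrals either vanish or carry a favorable sign. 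Since the resulting differential inequality is at most quadratic in the controlled quantities, the $X^3$ bound is global on $[0,T]$.

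Existence then follows by a successive-approximation or contraction scheme on the linear problem, with the a priori bound keeping the iterates in a ball of $X^3(Q_T)$ and forcing convergence; uniqueness is inherited from the weak uniqueness of Theorem~\ref{T1.1}, and Lipschitz continuity of the map $(u_0,\mu_0,\nu_0,\nu_1,f)\mapsto u$ on balls follows from the same a priori estimate applied to differences of two solutions.
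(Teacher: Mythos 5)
Your overall architecture (lift the boundary data, reduce to homogeneous lateral conditions, get $u\in X^0$ and $u_t\in X^0$ by energy estimates, then recover spatial derivatives from the equation, then close the nonlinear terms by Gronwall) matches the paper's strategy in outline. But there is a genuine gap at the decisive step, the recovery of spatial regularity. You claim that the stationary relation $w_{xxx}+w_{xyy}=h$ with $h=g-w_t-bw_x\in L_2$, diagonalized in the basis $\psi_l$ into the problems $v_l'''-\lambda_l v_l'=h_l$ with $v_l(0)=v_l(R)=v_l'(R)=0$, ``upgrades an $L_2$ right-hand side to $w\in\widetilde H^3$.'' This is false: the operator $\partial_x(\partial_x^2+\partial_y^2)$ is not elliptic of order three and does not control $\partial_y^3 w$. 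Summing the modes to land in $H^3(\Omega)$ would require the uniform bound $\lambda_l^3\|v_l\|_{L_2(0,R)}^2\leq c\,\|h_l\|_{L_2(0,R)}^2$, which fails: fix a nonzero $v\in C_0^\infty(0,R)$ and set $h_\lambda=v'''-\lambda v'$; then $\|h_\lambda\|_{L_2}\leq c(1+\lambda)$ while $\lambda^3\|v\|^2_{L_2}$ grows like $\lambda^3$, so the ratio blows up as $\lambda\to\infty$. What the relation actually yields from $h\in L_2$ is $w_{xx}\in L_2$ (and $w_{xy}$ by interpolation), i.e.\ the $H^2$ level plus extra $x$-derivatives — this is exactly the content of Lemma~\ref{L1.2} and inequality \eqref{2.56} in the paper. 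The missing derivative $u_{yyy}$ cannot come from the equation read as a stationary identity; the paper obtains it by a separate dissipative energy estimate, differentiating the equation twice in $y$ and multiplying by $-2(1+x)u_{yyyy}$ (see \eqref{2.62}), which is precisely why the hypothesis $f\in L_2(0,T;\widetilde H^{(0,2)})$ appears in Theorem~\ref{T1.2}. Your proposal never uses that hypothesis — a reliable sign that the $y$-regularity is not actually being produced. The same objection applies to your claim that an $\widetilde H^1$ right-hand side gives $w\in\widetilde H^4$. Finally, even after $u_{yyy}$, $u_{xx}$, $u_{xxx}$ are controlled, the mixed top-order derivatives require more than algebra: the paper passes through boundary terms such as $\int_0^L u_{xyy}^2\big|_{x=0}\,dy$ and $\int_0^L u_{xyyy}^2\big|_{x=0}\,dy$ (controlled by the dissipative estimates) and an elliptic estimate for $g=\widetilde u_x$ with its trace on $x=0$ bounded via $u_{tx}\big|_{x=0}$, none of which is visible in your plan.

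A secondary remark: for the nonlinear part you correctly identify the difficulty (the terms $u_tu_x+uu_{tx}$ and $\partial^\alpha(uu_x)$, $|\alpha|\le 2$) but leave it as an expectation. The paper's Lemma~\ref{L3.3} shows this can be closed, but only through a specific ordered chain — $X^0$, then $u_y$, then $u_t$, then $X^2$, then $X^3$ — in which each Gronwall step uses the quantities already bounded at the previous level together with the interpolation inequalities \eqref{1.14}--\eqref{1.21}; the order matters because, e.g., the $X^2$ bound \eqref{2.52} is driven by $\|u_t\|_{C([0,t];L_2)}$. Your sketch would need to be organized the same way to avoid circularity.
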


\begin{remark}\label{R1.3}
According to \eqref{1.7} the assumptions on the boundary data $\mu$ are natural. In \cite{DL} for construction of regular solutions only homogeneous Dirichlet boundary conditions were considered. Moreover, in that paper for $u_{yyy}$ was established only that $u_{yyy}\in L_2(Q_T)$.  
\end{remark}

Estimates on solutions, established in the proof of Theorem~\ref{T1.1}, provide the following result on the large-time decay of small solutions. Let $B_+ = \mathbb R_+^t\times (0,L)$.

\begin{theorem}\label{T1.3}
Let there exists $\delta\in (0,1)$ such that $\varkappa>0$, where
\begin{equation}\label{1.9}
\varkappa = -b+\left\{
\begin{aligned}
\pi^2(1-\delta)\bigl(\frac 3{R^2} + \frac 1{L^2}\bigr) \qquad&\mbox{in the case a)},\\
\pi^2(1-\delta)\bigl(\frac 3{R^2} + \frac 1{4L^2}\bigr) \qquad&\mbox{in the case c)},\\
\pi^2(1-\delta)\frac 3{R^2} \qquad&\mbox{in the cases b) and d)}.
\end{aligned}
\right.
\end{equation}
Let
\begin{equation}\label{1.10}
\epsilon_0 = \frac{3^{5/4}\pi\delta}{4} \times\left\{
\begin{aligned}
\max\Bigl(\frac{\sqrt{3}}{R},\frac 1{L}\Bigr) \qquad&\mbox{in the case a)},\\
\max\Bigl(\frac{\sqrt{3}}{R},\frac 1{2L}\Bigr)\qquad&\mbox{in the case c)},\\
\frac{\sqrt{3}}{R}\times\frac {3^{1/4}(\pi L)^{1/2}}{R^{1/2}+3^{1/4}(\pi L)^{1/2}} \qquad&\mbox{in the cases b) and d)}.
\end{aligned}
\right.
\end{equation}
Let $u_0\in L_2$, $\nu_1\in L_2(B_+)$, 
$$
\|u_0\|^2_{L_2}+\|\nu_1\|^2_{L_2(B_+)}\leq \epsilon_0^2,
$$
$f\equiv 0$, $\mu_0=\nu_0\equiv 0$. Then the corresponding unique weak solution $u(t,x,y$) to problem \eqref{1.1}--\eqref{1.4} from the space $X(Q_T)$ $\forall T>0$ satisfies an inequality
\begin{equation}\label{1.11}
\|u(t,\cdot,\cdot)\|^2_{L_2}\leq(1+R) e^{-\varkappa t/(1+R)}\Bigl[\|u_0\|_{L_2}^2 +
\bigl\|e^{\varkappa\tau/(2(1+R))}\nu_1\bigr\|^2_{L_2(B_t)}\Bigr]
\quad \forall t\geq 0.
\end{equation}
\end{theorem}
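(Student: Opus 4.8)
The plan is to obtain \eqref{1.11} from a weighted $L_2$ a~priori estimate with the linear weight $\rho(x)=1+x$, derived exactly as in the proof of Theorem~\ref{T1.1}. It is enough to establish the estimate for the smooth approximating solutions constructed there, since well-posedness in $X(Q_T)$ lets it pass to the weak solution. First I would multiply \eqref{1.1} (with $f\equiv0$) by $2(1+x)u$ and integrate over $\Omega$. Because $\mu_0=\nu_0\equiv0$ we have $u\big|_{x=0}=u\big|_{x=R}=0$, hence also $u_y\big|_{x=0}=u_y\big|_{x=R}=0$; the transport and nonlinear terms then contribute $b\|u\|_{L_2}^2$ and $\frac23\iint u^3$, while the traces of the third–order terms at $x=0,R$ cancel except at $x=R$, where $u_x=\nu_1$. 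Integrating by parts (twice in $x$ for $u_{xxx}$, once in $y$ and once in $x$ for $u_{xyy}$, the $y$-boundary terms vanishing in each of the cases a)--d) by the corresponding conditions in \eqref{1.4}) yields the key identity
\begin{multline*}
\frac{d}{dt}\iint_\Omega (1+x)u^2\,dxdy + 3\|u_x\|_{L_2}^2 + \|u_y\|_{L_2}^2 + \int_0^L u_x^2\big|_{x=0}\,dy \\
= b\|u\|_{L_2}^2 + (1+R)\int_0^L \nu_1^2\,dy + \frac23\iint_\Omega u^3\,dxdy .
\end{multline*}

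Writing $E(t)=\iint_\Omega(1+x)u^2\,dxdy$, so that $\|u\|_{L_2}^2\le E\le(1+R)\|u\|_{L_2}^2$, I note that the unweighted version of the same computation (weight $\equiv1$) gives $\frac{d}{dt}\|u\|_{L_2}^2+\int_0^L u_x^2\big|_{x=0}\,dy=\int_0^L\nu_1^2\,dy$, whence $\|u(t)\|_{L_2}^2\le\|u_0\|_{L_2}^2+\|\nu_1\|_{L_2(B_+)}^2\le\epsilon_0^2$ for all $t\ge0$. This is the decisive point: the smallness of the solution is propagated for all time with no continuity/bootstrap argument, purely from $L_2$-dissipativity of the boundary conditions.

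Next I would split $3\|u_x\|_{L_2}^2+\|u_y\|_{L_2}^2=(1-\delta)(3\|u_x\|_{L_2}^2+\|u_y\|_{L_2}^2)+\delta(3\|u_x\|_{L_2}^2+\|u_y\|_{L_2}^2)$. The Friedrichs inequality $\|u_x\|_{L_2}^2\ge(\pi^2/R^2)\|u\|_{L_2}^2$ (from $u=0$ at $x=0,R$) together with, in the cases a) and c), the $y$-Poincaré inequalities $\|u_y\|_{L_2}^2\ge(\pi^2/L^2)\|u\|_{L_2}^2$ and $\|u_y\|_{L_2}^2\ge(\pi^2/(4L^2))\|u\|_{L_2}^2$ respectively (the first eigenvalues $\lambda_l$ of the systems $\psi_l$), and no $y$-coercivity in the cases b) and d), give $(1-\delta)(3\|u_x\|_{L_2}^2+\|u_y\|_{L_2}^2)-b\|u\|_{L_2}^2\ge\varkappa\|u\|_{L_2}^2\ge\frac{\varkappa}{1+R}E$, with $\varkappa$ exactly as in \eqref{1.9}. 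It then remains to absorb $\frac23\iint u^3$ into the reserved part $\delta(3\|u_x\|_{L_2}^2+\|u_y\|_{L_2}^2)$. I expect this to be the main obstacle: one needs a sharp anisotropic multiplicative (Gagliardo--Nirenberg/Ladyzhenskaya-type) inequality of the form $\iint u^3\le c\,\|u\|_{L_2}^2\,\|u_x\|_{L_2}^{1/2}\|u_y\|_{L_2}^{1/2}$, then a conversion of one factor $\|u\|_{L_2}$ through the $x$-Friedrichs inequality and a Young/AM--GM balancing of $\|u_x\|_{L_2}$ against $\|u_y\|_{L_2}$ with the weights $3$ and $1$, arranged so that $\frac23\iint u^3\le\delta(3\|u_x\|_{L_2}^2+\|u_y\|_{L_2}^2)$ holds precisely when $\|u\|_{L_2}\le\epsilon_0$. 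Tracking the optimal constants through this chain—including the extra optimization that produces the compound factor $3^{1/4}(\pi L)^{1/2}/(R^{1/2}+3^{1/4}(\pi L)^{1/2})$ in the cases b),d) where $\|u_y\|_{L_2}$ is not controlled from below, and the maxima in the cases a),c)—is what fixes all the explicit geometry of \eqref{1.10}; everything else in the argument is soft.

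Once the nonlinear term is absorbed, the weighted identity reduces to the differential inequality $\frac{d}{dt}E+\frac{\varkappa}{1+R}E\le(1+R)\int_0^L\nu_1^2\,dy$. Multiplying by $e^{\varkappa t/(1+R)}$, integrating in time, and using $E(0)\le(1+R)\|u_0\|_{L_2}^2$, $\|u(t)\|_{L_2}^2\le E(t)$, together with $\int_0^t e^{\varkappa\tau/(1+R)}\int_0^L\nu_1^2\,dy\,d\tau=\bigl\|e^{\varkappa\tau/(2(1+R))}\nu_1\bigr\|_{L_2(B_t)}^2$, I obtain \eqref{1.11}. Since the bound $\|u(t)\|_{L_2}\le\epsilon_0$ from the second step holds for every $t\ge0$, the estimate is valid on all of $[0,+\infty)$, which completes the argument.
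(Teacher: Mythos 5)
Your proposal follows the paper's proof essentially step for step: the unweighted identity yielding $\|u(t,\cdot,\cdot)\|_{L_2}\le\epsilon_0$ for all $t\ge 0$, the weighted identity with $\rho(x)=1+x$, the $(1-\delta)/\delta$ splitting combined with Steklov's inequalities \eqref{1.17}, \eqref{1.18} to produce $\varkappa$, absorption of $\frac23\iint u^3$ via an anisotropic interpolation inequality, and a Gronwall argument. The only part you leave unexecuted is the constant-tracking that certifies the specific $\epsilon_0$ of \eqref{1.10}; for that one needs the sharp form of Lemma~\ref{L1.1}, i.e.\ inequality \eqref{1.15} with its explicit constant $2$ and the additional $\sigma L^{-1/2}$ term in the cases b) and d) (where $u$ does not vanish at $y=0$ or $y=L$ and the second term, not merely the absence of $y$-coercivity, is what produces the compound factor), which is exactly the estimate the paper applies in \eqref{4.7}--\eqref{4.9}.
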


\begin{remark}\label{R1.4}
In the case a) if $b=1$, $\nu_1\equiv 0$ a similar result for regular solutions in a slightly different form was previously established in \cite{DL}.
\end{remark}

On the basis of ideas and results from \cite{R} as an application of the developed technique we obtain the following result on the controllability problem for system \eqref{1.4}--\eqref{1.4}  with the unknown boundary control $\nu_1$ and with the condition of final overdetermination
\begin{equation}\label{1.12}
u(T,x,y)=u_T(x,y),\qquad (x,y)\in\Omega.
\end{equation}

\begin{theorem}\label{T1.4}
Let for any natural $l$, such that $\lambda_l <b$ (where $\lambda_l$ are the aforementioned eigenvalues of the operator $(-\psi'')$ on $(0,L)$ with corresponding boundary conditions),
\begin{equation}\label{1.13}
R \ne 2\pi \Bigl(\frac{k^2+km+m^2}{3(b-\lambda_l)}\Bigr)^{1/2}\qquad \forall k,m\in\mathbb N.
\end{equation}
Let $T>0$, $f\equiv 0$, $\mu_0=\nu_0\equiv 0$, $u_0, u_T\in L_2$. Then there exists $\varepsilon>0$, such that if $\|u_0\|_{L_2}, \|u_T\|_{L_2} <\varepsilon$ there exists a function $\nu_1\in L_2(B_T)$, such that there exists a unique solution $u\in X(Q_T)$ to problem \eqref{1.1}--\eqref{1.4}, satisfying \eqref{1.12}.
\end{theorem}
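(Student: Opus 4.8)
The plan is to reduce the nonlinear controllability problem to the controllability of the linearized equation
$$
v_t + b v_x + v_{xxx} + v_{xyy} = g,
$$
carrying the boundary conditions \eqref{1.3}--\eqref{1.4} with $\mu_0=\nu_0\equiv 0$ and with $\nu_1$ playing the role of the control, and then to close the argument by a fixed-point scheme in which the nonlinearity $uu_x=\tfrac12\partial_x(u^2)$ is absorbed into the source $g$. Throughout I would rely on the linear well-posedness and the a priori estimates underlying Theorem~\ref{T1.1}, which make the solution map and its dependence on $(u_0,\nu_1,g)$ Lipschitz in the relevant norms.

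For the linear problem I would expand in the $y$-eigenbasis $\{\psi_l\}$. Writing $v(t,x,y)=\sum_l v_l(t,x)\psi_l(y)$ and using $-\psi_l''=\lambda_l\psi_l$ turns the equation into the decoupled family of linear Korteweg--de~Vries equations
$$
\partial_t v_l + (b-\lambda_l)\partial_x v_l + \partial_x^3 v_l = g_l,
$$
each posed on $(0,R)$ with $v_l(t,0)=v_l(t,R)=0$ and control $\partial_x v_l(t,R)=\nu_{1,l}(t)$, where $\nu_{1,l}$ and $g_l$ are the $l$-th coefficients of $\nu_1$ and $g$. Thus controllability in the rectangle reduces to a family of one-dimensional boundary-control problems of exactly the type treated in \cite{R}, the drift coefficient of the $l$-th problem being $b-\lambda_l$.

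By the Hilbert Uniqueness Method each of these is equivalent to an observability inequality for the backward adjoint system, and after separation of variables the observability for the rectangle splits, via Parseval in $y$, into the individual modal inequalities, the observation being the boundary trace $\partial_x\varphi|_{x=0}$ of the adjoint. For a mode with $b-\lambda_l\le 0$ the drift is non-positive and observability holds with no restriction on $R$; for a mode with $b-\lambda_l>0$ the critical-length phenomenon of \cite{R} appears and observability can fail precisely when $R$ is a critical length. A compactness--uniqueness argument reduces the inequality to a unique continuation property for the stationary adjoint problem, and the characteristic cubic governing its solutions admits a nontrivial admissible triple of roots exactly when $R=2\pi\bigl((k^2+km+m^2)/(3(b-\lambda_l))\bigr)^{1/2}$ for some $k,m\in\mathbb N$. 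Hypothesis \eqref{1.13} excludes this for every $l$ with $\lambda_l<b$, so every mode is observable; since $\lambda_l\to+\infty$, only finitely many $l$ satisfy $\lambda_l<b$, and one can then combine the modal observability constants into a single constant for the whole system, yielding a bounded linear control operator $\nu_1=\Phi(u_0,u_T,g)$.

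Finally I would solve the nonlinear problem by a contraction argument. For $\|u_0\|_{L_2},\|u_T\|_{L_2}<\varepsilon$ define the map sending $w\in X(Q_T)$ to the solution of the linear control problem with source $g=-\tfrac12\partial_x(w^2)$ steered from $u_0$ to $u_T$ by the control $\Phi(u_0,u_T,g)$; a fixed point is exactly a solution of \eqref{1.1}--\eqref{1.4} meeting \eqref{1.12}. Using the multilinear estimates from the proof of Theorem~\ref{T1.1} to bound $\partial_x(w^2)$ in the source norm, one verifies that on a sufficiently small ball of $X(Q_T)$ this map is a contraction once $\varepsilon$ is small, which produces the required $\nu_1\in L_2(B_T)$ and the unique controlled trajectory. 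I expect the main obstacle to be the observability/unique-continuation step: establishing it across all modes and pinning the exceptional set to the algebraic condition \eqref{1.13} is where the real work lies, the fixed-point closure being comparatively routine given Theorem~\ref{T1.1}.
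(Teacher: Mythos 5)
Your overall architecture coincides with the paper's: an HUM-type construction of the linear control operator from an observability inequality, Rosier's critical-length analysis applied modewise to produce condition \eqref{1.13}, and a small-data contraction in $X(Q_T)$ that absorbs $uu_x=\tfrac12\partial_x(u^2)$ as a divergence-form source (your fixed-point map is exactly the paper's map \eqref{5.8}, built from $P$, $P_1\circ\Gamma$ and $P_2$). The duality/Lax--Milgram construction of $\Gamma=P_{1T}^{-1}$ and the contraction step are sound as you describe them.

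The one place where you genuinely diverge --- and where there is a gap --- is the observability inequality. You propose to prove it mode by mode, obtaining for each $l$ a one-dimensional observability constant $c_l$ for the KdV equation with drift $b-\lambda_l$, and then to ``combine the modal observability constants into a single constant.'' Parseval in $y$ does reduce the two-dimensional inequality to the modal ones, but only if $\sup_l c_l<\infty$. Your finiteness remark (only finitely many $l$ satisfy $\lambda_l<b$) disposes of the critical-length issue, but says nothing about the infinitely many modes with $\lambda_l\ge b$: there the drift $b-\lambda_l\to-\infty$, Rosier's compactness--uniqueness argument yields a non-explicit constant depending on the coefficient, and uniformity in $l$ must be proved rather than asserted (a direct multiplier estimate gives it when $T$ is large relative to $R$, but for arbitrary $T>0$ this requires additional work). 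The paper avoids the issue in Lemma~\ref{L2.16} by running compactness--uniqueness once on the full two-dimensional system: precompactness of $\{Pu_{0n}\}$ in $L_2(Q_T)$ follows from the energy identity \eqref{2.47} and a bound on $\partial_t Pu_{0n}$, and the eigenfunction expansion is invoked only for the limit function, i.e.\ only in the qualitative unique-continuation step \eqref{2.72}--\eqref{2.74}, where no uniformity over $l$ is needed. To complete your argument you must either prove $\sup_l c_l<\infty$ or restructure the observability proof as in Lemma~\ref{L2.16}.
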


\begin{remark}\label{R1.5}
In comparison with Theorem~\ref{T1.3} the constant $\varepsilon$ is not evaluated explicitly.
\end{remark}

Further, let $\eta(x)$ denotes a cut-off function, namely, $\eta$ is an infinitely smooth non-decreasing function on $\mathbb R$  such that $\eta(x)=0$ when $x\leq 0$, $\eta(x)=1$ when $x\geq 1$, $\eta(x)+\eta(1-x)\equiv 1$.

We drop limits of integration in integrals over the rectangle $\Omega$.

\medskip

The following interpolating inequality specifying the one from \cite{LSU} is crucial for the study.

\begin{lemma}\label{L1.1}
Let $\varphi(x,y)\in H^1$ satisfy $\varphi\big|_{x=0}=0$ or $\varphi|_{x=R}=0$, then the following inequalities hold:
\begin{multline}\label{1.14}
\iint \varphi^4 dxdy \leq 4\Bigl(\iint \varphi_x^2\,dxdy \iint \varphi_y^2\,dxdy\Bigr)^{1/2} \iint \varphi^2\,dxdy \\
+\frac{4\sigma}{L} \Bigl(\iint \varphi_x^2\,dxdy\Bigr)^{1/2} \Bigl(\iint \varphi^2\,dxdy\Bigr)^{3/2},
\end{multline}
\begin{multline}\label{1.15}
\iint |\varphi|^3 dxdy \leq 2\Bigl(\iint \varphi_x^2\,dxdy \iint \varphi_y^2\,dxdy\Bigr)^{1/4} \iint \varphi^2\,dxdy \\
+\frac{2\sigma}{L^{1/2}} \Bigl(\iint \varphi_x^2\,dxdy\Bigr)^{1/4} \Bigl(\iint \varphi^2\,dxdy\Bigr)^{5/4},
\end{multline}
where $\sigma=0$ if $\varphi\big|_{y=0}=0$ or $\varphi\big|_{y=L}=0$ and $\sigma=1$ in the general case.
\end{lemma}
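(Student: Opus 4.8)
The plan is to reduce both inequalities to one-dimensional pointwise bounds on $\varphi^2$, obtained by integrating $\partial_x(\varphi^2)$ and $\partial_y(\varphi^2)$, and then to integrate a product of these bounds; by density of $\widetilde{\EuScript S}(\overline\Sigma)$-type functions it suffices to treat smooth $\varphi$ satisfying the stated conditions. Since $\varphi$ vanishes at $x=0$ or at $x=R$, writing $\varphi^2(x,y)=\int\partial_{x'}(\varphi^2)\,dx'$ and using $|\partial_{x'}(\varphi^2)|=2|\varphi\varphi_{x'}|$ yields the clean bound $P(y):=\sup_x\varphi^2(x,y)\le 2\int_0^R|\varphi\varphi_x|\,dx$. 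In the transverse direction, if $\varphi$ vanishes at $y=0$ or $y=L$ (the case $\sigma=0$) the same argument gives $Q(x):=\sup_y\varphi^2(x,y)\le 2\int_0^L|\varphi\varphi_y|\,dy$; in the general case $\sigma=1$ there is no boundary condition, and I would instead invoke the mean value inequality $\sup_y g\le \frac1L\int_0^L g\,dy+\int_0^L|g'|\,dy$ (choose $y_0$ with $g(y_0)=\frac1L\int_0^L g$ and integrate $g'$ from $y_0$) applied to $g=\varphi^2$, giving $Q(x)\le \frac1L\int_0^L\varphi^2\,dy+2\int_0^L|\varphi\varphi_y|\,dy$. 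Integrating in the remaining variable and applying the Cauchy--Schwarz inequality, with $A=\iint\varphi^2$, $X=\iint\varphi_x^2$, $Y=\iint\varphi_y^2$, produces $\int_0^L P\,dy\le 2A^{1/2}X^{1/2}$ and $\int_0^R Q\,dx\le \tfrac{\sigma}{L}A+2A^{1/2}Y^{1/2}$.

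For \eqref{1.14} I would observe that $\varphi^4(x,y)\le P(y)Q(x)$, since $\varphi^2\le P(y)$ and $\varphi^2\le Q(x)$ pointwise, so the double integral factors:
\[
\iint\varphi^4\,dxdy\le\Bigl(\int_0^L P\,dy\Bigr)\Bigl(\int_0^R Q\,dx\Bigr)\le 2A^{1/2}X^{1/2}\Bigl(\tfrac{\sigma}{L}A+2A^{1/2}Y^{1/2}\Bigr),
\]
which expands to $4(XY)^{1/2}A+\tfrac{2\sigma}{L}X^{1/2}A^{3/2}$ and hence gives \eqref{1.14}. For \eqref{1.15} I would instead use $\varphi^2\le P(y)^{1/2}Q(x)^{1/2}$ and estimate, by Cauchy--Schwarz,
\[
\iint|\varphi|^3\,dxdy=\iint|\varphi|\,\varphi^2\,dxdy\le A^{1/2}\Bigl(\int_0^L P\,dy\,\int_0^R Q\,dx\Bigr)^{1/2};
\]
substituting the two factor bounds and splitting the root of the sum via $(a+b)^{1/2}\le a^{1/2}+b^{1/2}$ separates the result into the term $2(XY)^{1/4}A$ and the term carrying $\sigma/L^{1/2}$, which is exactly the structure of \eqref{1.15}.

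The only genuinely delicate point is the general case $\sigma=1$: without a boundary condition in $y$ one cannot control $\sup_y\varphi^2$ by the derivative alone, and the mean value inequality is precisely what supplies the additional term with the correct $1/L$ scaling (and $1/L^{1/2}$ after the square-root splitting in the cubic case). The remaining computations are bookkeeping, and the explicit constants $4\sigma/L$ and $2\sigma/L^{1/2}$ are comfortably met by this argument, the coefficients $4$ and $2$ of the leading $(XY)$-terms being reproduced exactly.
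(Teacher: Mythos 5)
Your proof is correct and follows essentially the same route as the paper's: both are the Ladyzhenskaya-type argument bounding $\sup_x\varphi^2$ by $2\int|\varphi\varphi_x|\,dx$ and $\sup_y\varphi^2$ by a derivative term plus a mean term, multiplying the two one-dimensional bounds and applying Cauchy--Schwarz (the paper packages this as inequality \eqref{1.16} applied to $\varphi^2$, and gets \eqref{1.15} from \eqref{1.14} by H\"older rather than your $P^{1/2}Q^{1/2}$ splitting, but these are cosmetic differences). Your mean-value-point trick for the transverse direction even yields the slightly better constants $2\sigma/L$ and $2^{1/2}\sigma/L^{1/2}$ in place of $4\sigma/L$ and $2\sigma/L^{1/2}$, so the stated inequalities follow a fortiori.
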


\begin{proof}
We follow the argument from \cite{LSU} and start with the following inequality:
\begin{equation}\label{1.16}
\iint \varphi^2\,dxdy \leq \iint |\varphi_x|\,dxdy \Bigl( \iint |\varphi_y|\,dxdy +
\frac{2\sigma}{L} \iint |\varphi|\,dxdy\Bigr).
\end{equation}
In fact,
$$
\sup\limits_{x\in (0,R)} |\varphi(x,y)| \leq \int_0^R |\varphi_x(x,y)|\,dx;
$$
in the general case $\varphi(x,y) = \displaystyle \varphi(x,y)\frac{y}{L} + \varphi(x,y)\frac{L-y}{L} \equiv
\varphi_1(x,y)+\varphi_2(x,y)$, where
$$
\sup\limits_{y\in (0,L)} |\varphi_j(x,y)| \leq \int_0^L |\varphi_y(x,y)|\alpha_j(y)\,dy +\frac{1}{L}\int_0^L |\varphi(x,y)|\,dy,
$$
 where either $\alpha_j(y)\equiv y/L$, or $\alpha_j(y)\equiv (L-y)/L$, therefore,
$$
\sup\limits_{y\in (0,L)} |\varphi(x,y)| \leq \int_0^L |\varphi_y(x,y)|\,dy + \frac{2\sigma}{L} \int_0^L |\varphi(x,y)|\,dy.
$$
Since
$$
\iint \varphi^2(x,y)\,dxdy 
\leq \int_0^L \sup_{x\in (0,R)} |\varphi(x,y)|\,dy
\int_0^R \sup_{y\in (0,L)} |\varphi(x,y)|\,dx,
$$
we obtain \eqref{1.16}. Therefore,
$$
\iint \varphi^4\,dxdy \leq \iint \bigl|(\varphi^2)_x\bigr|\,dxdy \Bigl(
\iint \bigl|(\varphi^2)_y\bigr|\,dxdy + \frac{2\sigma}{L}\iint \varphi^2\,dxdy\Bigr),
$$
whence \eqref{1.14} succeeds. Inequality \eqref{1.15} obviously follows from \eqref{1.14} and H\"older's inequality.
\end{proof}

For the decay results, we need Steklov's inequalities in the following form: for $\psi\in H_0^1(0,L)$,
\begin{equation}\label{1.17}
\int_0^L \psi^2(y)\,dy \leq \frac{L^2}{\pi^2} \int_0^L \bigl(\psi'(y)\bigr)^2\,dy,
\end{equation}
for $\psi\in H^1(0,L)$, $\psi\big|_{y=0}=0$,
\begin{equation}\label{1.18}
\int_0^L \psi^2(y)\,dy \leq \frac{4L^2}{\pi^2} \int_0^L \bigl(\psi'(y)\bigr)^2\,dy.
\end{equation}

In the following obvious interpolating results values of constants are indifferent for our purposes: for $\varphi\in H^1$
\begin{equation}\label{1.19}
\sup\limits_{x\in [0,R]}\int_0^L \varphi^2(x,y)\,dy \leq c \Bigl(\iint \varphi_x^2\,dxdy \iint \varphi^2\,dxdy\Bigr)^{1/2}+ c\iint \varphi^2\,dxdy,
\end{equation}
\begin{equation}\label{1.20}
\|\varphi\|_{L_4} \leq c\|\varphi\|^{1/2}_{H^1}\|\varphi\|^{1/2}_{L_2}
\end{equation}
and for $\varphi\in H^2$
\begin{equation}\label{1.21}
\|\varphi\|_{L_{\infty}} \leq c \|\varphi\|_{H^2}.
\end{equation}

\begin{lemma}\label{L1.2}
For $k=1$ and $k=2$ introduce functional spaces 
$$
H^{(-k,0)}= \{\varphi = \sum\limits_{m=0}^k \partial_x^m \varphi_m: \varphi_m \in L_2\}
$$
endowed with the natural norms. Then for $j=1$ and $j=2$
\begin{equation}\label{1.22}
\|\partial_x^j \varphi\|_{L_{2}} \leq c(R)\bigl( \|\varphi_{xxx}\|_{H^{(j-3,0)}} +
\|\varphi\|_{L_2}\bigr).
\end{equation}
\end{lemma}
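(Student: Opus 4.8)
The plan is to exploit that only $x$--derivatives occur, so that \eqref{1.22} is really a one--dimensional (Hilbert--space--valued) interpolation inequality. Setting $H:=L_2(0,L)$ and regarding $\varphi$ as a map $x\mapsto\varphi(x,\cdot)\in H$, the space $H^{(-k,0)}$ carries the natural (infimum) norm $\|\varphi\|_{H^{(-k,0)}}=\inf\{(\sum_{m=0}^k\|\varphi_m\|_{L_2}^2)^{1/2}:\varphi=\sum_{m=0}^k\partial_x^m\varphi_m\}$, which identifies it with the $H$--valued space $H^{-k}\bigl((0,R);H\bigr)$. Thus, with $k=3-j$, it suffices to prove, for $w\in L_2((0,R);H)$ with $w'''\in H^{-k}((0,R);H)$, the estimate $\|w^{(j)}\|_{L_2}\le c(R)\bigl(\|w'''\|_{H^{-k}}+\|w\|_{L_2}\bigr)$ for $j=1,2$. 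On the whole line this is immediate from the Fourier transform, since pointwise $|\xi|^{j}\le C\bigl((1+\xi^2)^{-k/2}|\xi|^{3}+1\bigr)$; the only genuine work is to obtain the inequality on the finite interval with an explicit dependence on $R$, which I would do by direct integration rather than through an extension operator.

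To this end I would fix a near--optimal representation $w'''=\sum_{m=0}^{k}\partial_x^m g_m$ with $g_m\in L_2((0,R);H)$ and $\sum_{m=0}^k\|g_m\|_{L_2}^2\le 2\|w'''\|_{H^{-k}}^2$, and integrate this identity three times starting from $x=0$. Writing $(\mathcal I h)(x)=\int_0^x h(t)\,dt$, Cauchy--Schwarz gives $\|\mathcal I h\|_{L_2}\le R\|h\|_{L_2}$, hence $\|\mathcal I^{n} h\|_{L_2}\le R^{n}\|h\|_{L_2}$. After the integrations each term $\partial_x^m g_m$ with $m\le k=3-j$ contributes, at the level of $w^{(j)}$, exactly $g_m$ or an iterated primitive $\mathcal I^{n}g_m$ (never a genuine derivative, precisely because $m\le 3-j$), while the constants of integration assemble into an $H$--valued polynomial $p(x)$ in $x$ of degree $\le 2$. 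This yields decompositions $w=Q+p$ and $w^{(j)}=Q^{(j)}+p^{(j)}$, in which the primitive part satisfies $\|Q\|_{L_2}+\|Q^{(j)}\|_{L_2}\le c(R)\sum_{m=0}^k\|g_m\|_{L_2}\le c(R)\|w'''\|_{H^{-k}}$.

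The crux --- and the step I expect to be the main obstacle --- is to bound the polynomial remainder $p$ by $\|w\|_{L_2}$. From $p=w-Q$ one has $\|p\|_{L_2}\le\|w\|_{L_2}+c(R)\|w'''\|_{H^{-k}}$. Since the $H$--valued polynomials of degree $\le 2$ form a finite--dimensional space in the $x$--variable on which all norms are equivalent, $\|p^{(j)}\|_{L_2}\le c(R)\|p\|_{L_2}$; concretely this follows by applying the invertible Gram matrix of $1,x,x^2$ on $(0,R)$ to the scalar functions $\langle p(\cdot),h\rangle_H$, $h\in H$, which also exhibits the dependence on $R$. Combining the two displays gives $\|w^{(j)}\|_{L_2}\le\|Q^{(j)}\|_{L_2}+\|p^{(j)}\|_{L_2}\le c(R)(\|w'''\|_{H^{-k}}+\|w\|_{L_2})$, which, read back in $H=L_2(0,L)$, is precisely \eqref{1.22}. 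Every estimate except the polynomial bound is a routine primitive estimate; the care is needed only in verifying that the constants of integration are genuinely controlled by $\|\varphi\|_{L_2}$ with a constant of the stated type $c(R)$.
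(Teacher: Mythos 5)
Your argument is correct, and it takes a genuinely different route from the paper's. The paper proves \eqref{1.22} by duality: it estimates $\langle \partial_x^j\varphi,\psi\rangle$ for an arbitrary $\psi\in L_2$, building primitives $\psi_0,\psi_1$ of $\psi$ in the $x$-variable corrected by rank-one terms $a_i(y)\omega^{(\cdot)}(x)$ with $\omega\in C_0^\infty(0,R)$ so that the boundary terms in the integration by parts disappear; the derivatives are then either thrown onto these primitives (paired with $\varphi_{xxx}\in H^{(j-3,0)}$) or onto the smooth compactly supported $\omega$ (paired with $\varphi$ itself, producing the $\|\varphi\|_{L_2}$ term). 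Your proof is the primal counterpart: you antidifferentiate a near-optimal representation of $\varphi_{xxx}$ three times, split $\varphi=Q+p$ with $Q$ a sum of iterated primitives $\mathcal I^{\,3-m}g_m$ and $p$ the $H$-valued polynomial of integration constants of degree at most $2$, and control $\partial_x^jQ$ by the elementary bound $\|\mathcal I^{\,n}h\|_{L_2}\le R^{n}\|h\|_{L_2}$ (valid since $3-m-j\ge0$) and $\partial_x^jp$ by equivalence of norms on the three-dimensional span of $1,x,x^2$. The two places where your route needs care --- that an $L_2(\Omega)$ function with $\partial_x^3T=0$ in $\mathcal D'(\Omega)$ is $c_0(y)+c_1(y)x+c_2(y)x^2$ with $c_i\in L_2(0,L)$, and that the Gram matrix of $1,x,x^2$ on $(0,R)$ is invertible --- are both standard and you address them explicitly, so there is no gap. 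What each approach buys: the paper's duality argument never has to identify the polynomial kernel of $\partial_x^3$ or invoke finite-dimensional norm equivalence, while yours avoids having to justify the duality pairings $\langle\varphi_{xxx},\psi_0\rangle$ and the vanishing of boundary terms for a merely distributional $\varphi_{xxx}$; your version also yields $\partial_x^j\varphi\in L_2$ constructively as $Q^{(j)}+p^{(j)}$ rather than by a density/duality argument. The side remark identifying $H^{(-k,0)}$ with $H^{-k}((0,R);L_2(0,L))$ is not needed (and would require its own justification on a bounded interval); all you actually use is the existence of a representation with $\sum_m\|g_m\|_{L_2}^2$ comparable to the infimum norm, which is immediate from the definition.
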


\begin{proof}
First consider the case $j=2$. For any $\psi\in L_2$ let $a_0(y)\equiv \displaystyle \int_0^R \psi(x,y)\,dx$, then $\|a_0\|_{L_2(0,L)} \leq c\|\varphi\|_{L_2}$. Let $\omega(x)\in C_0^\infty(0,R)$, $\|\omega\|_{L_2(0,R)}=1$.

Define $\displaystyle\psi_0(x,y)\equiv \int_0^x \psi(z,y)\,dz -a_0(y)\omega(x)$, then $\|\psi_0\|_{L_2}, \|\psi_{0x}\|_{L_2}\leq c\|\psi\|_{L_2}$, $\psi_0\big|_{x=0}=\psi_0\big|_{x=R}=0$, $\psi = \psi_{0x} +a_0\omega'$. We have:
\begin{multline*}
\langle \varphi_{xx},\psi_{0x}\rangle = - \langle \varphi_{xxx},\psi_0\rangle \leq
\|\varphi_{xxx}\|_{H^{(-1,0)}}\bigl(\|\psi_0\|_{L_2} +\|\psi_{0x}\|_{L_2}\bigr) \\
\leq c \|\varphi_{xxx}\|_{H^{(-1,0)}}\|\psi\|_{L_2},
\end{multline*}
$$
\langle \varphi_{xx}, a_0\omega'\rangle = \langle \varphi, a_0\omega'''\rangle \leq 
c\|\varphi\|_{L_2}\|\psi\|_{L_2}.
$$
Therefore,
$$
\langle \varphi_{xx},\psi\rangle \leq c \bigl( \|\varphi_{xxx}\|_{H^{(-1,0)}} +
\|\varphi\|_{L_2}\bigr)\|\psi\|_{L_2}
$$
and \eqref{1.22} for $j=2$ follows.

Now let $j=1$. For $\psi\in L_2$ define $\displaystyle a_1(y)\equiv \int_0^R \psi_0(x,y)\,dx$, $\displaystyle \psi_1(x,y) \equiv \int_0^x \psi_0(z,y)\,dz -a_1(y)\omega(x)$. Then $\psi =\psi_{1xx}+a_0\omega' +a_1\omega''$ and similarly to the previous case
\begin{multline*}
\langle \varphi_{x},\psi_{1xx}\rangle = \langle \varphi_{xxx},\psi_1\rangle \leq
\|\varphi_{xxx}\|_{H^{(-2,0)}}\bigl(\|\psi_1\|_{L_2} +\|\psi_{1x}\|_{L_2}+\|\psi_{1xx}\|_{L_2}\bigr) \\
\leq c \|\varphi_{xxx}\|_{H^{(-2,0)}}\|\psi\|_{L_2},
\end{multline*}
$$
\langle \varphi_{x}, a_0\omega'+a_1\omega''\rangle =- \langle \varphi, a_0\omega''+a_1\omega'''\rangle \leq 
c\|\varphi\|_{L_2}\|\psi\|_{L_2}.
$$
Therefore,
$$
\langle \varphi_{x},\psi\rangle \leq c \bigl( \|\varphi_{xxx}\|_{H^{(-2,0)}} +
\|\varphi\|_{L_2}\bigr)\|\psi\|_{L_2},
$$
which finishes the proof.
\end{proof}

The paper is organized as follows. Auxiliary linear problems are considered in Section~\ref{S2}.  Section~\ref{S3} is devoted to the well-posedness results for the original problems. Decay of solutions is studied in Section~\ref{S4} and boundary controllability in Section~\ref{S5}.

\section{Auxiliary linear problems}\label{S2}

Consider a linear equation
\begin{equation}\label{2.1}
u_t+bu_x+u_{xxx}+u_{xyy}=f(t,x,y).
\end{equation}

For any interval $I\subset\mathbb R^x$ and $k$ introduce functional spaces 
\begin{multline*}
Y_k((0,T)\times I\times (0,L))
=\{u(t,x,y): \partial_t^j u\in C([0,T];\widetilde H^{k-3j}(I\times (0,L)),\quad\text{if}\ j\leq k/3,\\
\partial_x^n u\in C_b(\overline{I};\widetilde H^{(k-n+1)/3,k-n+1}(B_T)),\quad \text{if}\ n\leq k+1\}
\end{multline*}
(here and further the lower index 'b" means a bounded map),
\begin{multline*}
M_k((0,T)\times I\times (0,L))=\{f(t,x,y): \partial_t^j f\in L_2(0,T;\widetilde H^{k-3j}(I\times(0,L)),\\ \text{if}\ 
j\leq j_0=[(k+1)/3]\}.
\end{multline*}
Let $\widetilde \Phi_0(x,y) \equiv u_0(x,y)$
and for $j\geq 1$
$$
\widetilde\Phi_j(x,y) \equiv \partial^{j-1}_t f(0,x,y)-
(b\partial_x+\partial_x^3+\partial_x\partial_y^2)\widetilde\Phi_{j-1}(x,y). 
$$

Solutions to an initial-boundary value problem in a domain
$\Pi_T=(0,T)\times \Sigma$ with the initial profile
\eqref{1.2} for $(x,y)\in \Sigma$ and boundary conditions \eqref{1.4} for $(t,x)\in (0,T)\times\mathbb R$ for equation \eqref{2.1} can be constructed in a form (see \cite{F17})
\begin{equation}\label{2.2}
u(t,x,y)=S(t,x,y;u_0)+K(t,x,y;f),
\end{equation}
where potentials $S$ and $K$ are given by formulas
\begin{equation} \label{2.3}
\begin{gathered}
S(t,x,y;u_0) \equiv \sum_{l=1}^{+\infty} \frac{1}{2\pi}
\int_{\mathbb R}\, e^{it(\xi^3-b\xi+\lambda_l\xi)} e^{i\xi x}\widehat{u}_0(\xi,l)\,d\xi \psi_l(y),\\
K(t,x,y;f)\equiv\int^t_0 S(t-\tau,x,y;f(\tau,\cdot,\cdot))\,d\tau,
\end{gathered}
\end{equation}
where the functions $\widehat{u}_0(\xi,l)$ are defined similarly to \eqref{1.6}.

\begin{lemma}\label{L2.1}
If $u_0\in \widetilde H^k(\Sigma)$, $f\in M_k(\Pi_T)$ for some $T>0$
and $k\geq 0$, then a unique solution $u(t,x,y)\in Y_k(\Pi_T)$ to problem
\eqref{2.1}, \eqref{1.2}, \eqref{1.4} exists and for any $t_0\in (0,T]$
\begin{equation} \label{2.4}
\begin{aligned}
&\|u\|_{Y_k(\Pi_{t_0})}\\
&\leq c(T,k,b)\Bigl(\|u_0\|_{\widetilde H^k(\Sigma)}+
t_0^{1/6}\|f\|_{M_k(\Pi_{t_0})}
 +\sum_{j=0}^{j_0-1} \|\partial_t^j
f\big|_{t=0}\|_{\widetilde H^{k-3(j+1)}(\Sigma)}\Bigr).
\end{aligned}
\end{equation}
\end{lemma}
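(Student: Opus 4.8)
The plan is to exploit the separation of variables already built into the representation \eqref{2.2}--\eqref{2.3}. Expanding in the orthonormal system $\{\psi_l\}$ turns the strip problem \eqref{2.1}, \eqref{1.2}, \eqref{1.4} into a countable family of one-dimensional linear problems on $\mathbb R^x$: writing $u=\sum_l c_l(t,x)\psi_l(y)$, the relation $\partial_y^2\psi_l=-\lambda_l\psi_l$ together with the boundary conditions \eqref{1.4} (which are exactly those satisfied by the $\psi_l$) shows that each coefficient solves $\partial_t c_l+(b-\lambda_l)\partial_x c_l+\partial_x^3 c_l=\widehat f(\cdot,\cdot,l)$ with $c_l|_{t=0}=\widehat u_0(\cdot,l)$, whose solution is precisely the $l$-th summand of \eqref{2.3}. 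Thus every norm of $Y_k(\Pi_T)$ and $M_k(\Pi_T)$ decouples, via Parseval in $y$ and the equivalence $l^2\sim\lambda_l$, into a weighted sum over $l$ of one-dimensional norms, and it suffices to estimate each mode and sum.

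For the free part $S$ the multiplier $e^{it(\xi^3-b\xi+\lambda_l\xi)}$ is unimodular, so Parseval in $x$ controls the pure $L_2$-in-space component of the $Y_k$-norm (the terms with $j=0$ in the definition of $Y_k$) directly by $\|u_0\|_{\widetilde H^k(\Sigma)}$. The trace component, i.e. the bounds for $\partial_x^n S$ in $C_b(\overline I;\widetilde H^{(k-n+1)/3,k-n+1}(B_T))$, is where the dispersive smoothing enters: this is exactly the content of \eqref{1.7}, established in \cite{F08} for the whole plane. Applied mode by mode — the discrete index $l$ playing the role of the transverse frequency, with weight $l^2\sim\lambda_l$ — it yields for each $l$ the gain of one $x$-derivative in the anisotropic norm, and summation in $l$ reproduces the trace terms of the $Y_k$-norm. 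One must only check that the four boundary conditions are compatible with this transfer; since in every case the $\psi_l$ are trigonometric and the associated weight is $l^2\sim\lambda_l$, the whole-plane equivalence carries over verbatim.

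For the forced part one applies Duhamel through $K(t,\cdot;f)=\int_0^t S(t-\tau,\cdot;f(\tau,\cdot,\cdot))\,d\tau$, so the $S$-estimates are applied to the integrand and then integrated in $\tau$; a Hölder and interpolation argument in time, exploiting that smoothing gains one $x$-derivative at the cost of a $1/3$-order time derivative, produces the factor $t_0^{1/6}$ matching the $M_k$-norm. Time regularity is reduced to spatial regularity through the equation itself: $\partial_t u=f-(b\partial_x+\partial_x^3+\partial_x\partial_y^2)u$, so each $\partial_t^j u$ is a combination of $x$- and $y$-derivatives of $u$ and of $\partial_t^{j-1}f$; iterating, the terms $\|\partial_t^j f|_{t=0}\|_{\widetilde H^{k-3(j+1)}(\Sigma)}$ in \eqref{2.4} arise precisely as the data $\widetilde\Phi_{j+1}|_{t=0}$ generated in this reduction. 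Uniqueness follows from the $L_2$ energy identity: for the homogeneous problem the boundary terms at $y=0,L$ vanish under each case of \eqref{1.4} and the $x$-boundary terms vanish by decay in $\Sigma$, giving $\tfrac{d}{dt}\|u\|_{L_2}^2=0$; for $k=0$ this is justified by approximation.

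The main obstacle is the sharp trace estimate in the anisotropic spaces $\widetilde H^{(k-n+1)/3,k-n+1}(B_T)$ uniformly in $x$. Everything rests on transferring the whole-plane smoothing equivalence \eqref{1.7} to the strip for each type of boundary condition, and on correctly tracking the interplay of the temporal weight $|\theta|^{2/3}$ and the transverse weight $l^2$ so that the gain of one $x$-derivative (and the factor $t_0^{1/6}$ for the forcing) comes out with the right powers. Organizing the argument so that it treats all four cases a)--d) simultaneously — relying only on $\lambda_l\sim l^2$ and on the orthonormality and boundary behaviour of the $\psi_l$ — is the part requiring the most care.
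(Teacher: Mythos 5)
Your proposal follows essentially the same route as the paper: the paper also works directly with the mode-sum potentials $S$ and $K$ of \eqref{2.2}--\eqref{2.3}, obtains the $C([0,t_0];\widetilde H^{k-3j}(\Sigma))$ bounds and the reduction of time derivatives through the identity $\partial_t^jS+\partial_t^jK=S(\cdot;\widetilde\Phi_j)+K(\cdot;\partial_t^jf)$, gets the trace components of the $Y_k$-norm from the anisotropic smoothing estimate (quoted as \eqref{2.7} from \cite{F17} rather than re-derived from \eqref{1.7}), and extracts the factor $t_0^{1/6}$ as the worst case $s=2$ of the exponent $1/2-s/6$. The only place where your sketch is too quick is the claim that \eqref{1.7} transfers ``verbatim'' mode by mode: for the finitely many $l$ with $\lambda_l<b$ the symbol $\xi^3+(\lambda_l-b)\xi$ has critical points, the global-in-time trace computation degenerates, and one must settle for a local-in-time bound with constant depending on $T$ and $b$ --- which is exactly what the ready-made strip estimate \eqref{2.7} of \cite{F17} supplies and why the constant in \eqref{2.4} depends on $T$ and $b$.
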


\begin{proof}
First of all note that uniqueness of solutions to the considered problem in the space $L_2(\Pi_T)$ (in fact, in a more wide class) was established in \cite{BF13}. Next, note that
\begin{equation}\label{2.5}
\partial_t^j S(t,x,y;u_0)+\partial_t^j K(t,x,y;f)=
S(t,x,y;\widetilde\Phi_j)+ K(t,x,y;\partial_t^j f).
\end{equation}
Then the corresponding estimates on $\partial_t^j u$ in the norm $C([0,t_0];\widetilde H^{k-3j}(\Sigma))$ by
$\|\widetilde\Phi_j\|_{\widetilde H^{k-3j}(\Sigma)}$ and $\|\partial_t^j f\|_{L_1(0,t_0;\widetilde H^{k-3j}(\Sigma))}$ easily follow. In turn,
\begin{equation}\label{2.6}
\|\widetilde\Phi_j\|_{\widetilde H^{k-3j}(\Sigma)} \leq c(k,b)\Bigl( \|u_0\|_{\widetilde H^k(\Sigma)} +
\sum_{m=0}^{j-1} \|\partial_t^m
f\big|_{t=0}\|_{\widetilde H^{k-3(m+1)}(\Sigma)}\Bigr).
\end{equation}
It was proved in \cite{F17} that for $s\in [0,3]$ 
\begin{equation}\label{2.7}
\|u\|_{C_b(\mathbb{R};\widetilde H^{s/3,s}(B_{t_0}))}
\leq c(T,b)\Bigl(\|u_0\|_{\widetilde H^{s-1}(\Sigma)}+t_0^{1/2-s/6} \|f\|_{L_2(0,t_0;\widetilde H^{s-1}(\Sigma))}\Bigr).
\end{equation}
Applying \eqref{2.5}--\eqref{2.7} for $j=[(k+1-n-l)/3]\leq j_0$, $s=k+1-n-l-3j\in [0,3)$, we derive that
\begin{multline}\label{2.8}
\|\partial_t^j\partial_x^n\partial_y^l u\|_{C_b(\mathbb{R};\widetilde H^{s/3,s}(B_{t_0}))}   \leq
\|S(\cdot,\cdot,\cdot;\partial^n_x\partial^l_y\widetilde\Phi_j)\|_{C_b(\mathbb{R};\widetilde H^{s/3,s}(B_{t_0}))} \\
+\|K(\cdot,\cdot,\cdot;\partial^n_x\partial^l_y\partial_t^j f)\|_{C_b(\mathbb{R};\widetilde H^{s/3,s}(B_{t_0}))}  \leq c(T,k,b)\Bigl(\|u_0\|_{\widetilde H^k(\Sigma)} \\+
\sum_{m=0}^{j-1} \|\partial_t^m f\big|_{t=0}\|_{\widetilde H^{k-3(m+1)}(\Sigma)} + 
t_0^{1/2-s/6}\|\partial_t^j f\|_{L_2(0,t_0;\widetilde H^{k-3j})}\Bigr). 
\end{multline} 
Finally, it is suffice to note that the
minimal value $1/6$ for the degree $(1/2-s/6)$ in \eqref{2.8} is achieved if $k+1-n-l=3j+2$.
\end{proof}

Next, consider an initial-boundary value problem in a domain $\Pi_T^-=(0,T)\times \Sigma_-$, $\Sigma_-=\mathbb R_-\times (0,L) =\{(x,y): x<0, 0<y<L\}$, for equation \eqref{2.1} with initial condition \eqref{1.2} for $(x,y)\in \Sigma_-$, boundary conditions \eqref{1.4} for $(t,x)\in (0,T)\times \mathbb R_-$ and
\begin{equation}\label{2.9}
u(t,0,y)=\nu_0(t,y),\quad u_x(t,0,y)=\nu_1(t,y), \quad (t,y)\in B_T.
\end{equation}
Weak solutions to this problem are understood similarly to Definition~\ref{D1.1} with obvious changes, moreover, due to the absence of nonlinearity one can take solutions from the space $L_2(\Pi_T^-)$.

\begin{lemma}\label{L2.2}
A generalized solution to problem \eqref{2.1}, \eqref{1.2}, \eqref{1.4}, \eqref{2.9} is unique in the space $L_2(\Pi_T^-)$.
\end{lemma}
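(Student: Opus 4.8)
The plan is to argue by duality, in the spirit of Holmgren's principle. Let $u^{(1)},u^{(2)}\in L_2(\Pi_T^-)$ be two generalized solutions with the same data and set $w=u^{(1)}-u^{(2)}$. By linearity $w$ satisfies the homogeneous integral identity
\[
\iiint_{\Pi_T^-} w\bigl(\phi_t+b\phi_x+\phi_{xxx}+\phi_{xyy}\bigr)\,dxdydt = 0
\]
for every admissible test function $\phi$, that is, every $\phi$ with $\phi\in L_2(0,T;\widetilde H^2)$, $\phi_t,\phi_{xxx},\phi_{xyy}\in L_2$, sufficient decay as $x\to-\infty$, the $y$-boundary conditions dictated by \eqref{1.4}, and $\phi\big|_{t=T}=0$, $\phi\big|_{x=0}=0$. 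Only the single trace $\phi\big|_{x=0}=0$ is required at the finite end, since two conditions on $u$ are imposed there: this is exactly the count needed to discard the unprescribed trace $w_{xx}\big|_{x=0}$ when integrating by parts. It therefore suffices to show that the map $\phi\mapsto \phi_t+b\phi_x+\phi_{xxx}+\phi_{xyy}$ sweeps out a dense subset of $L_2(\Pi_T^-)$ as $\phi$ ranges over admissible test functions; then $\iiint w g=0$ for all $g$ in that subset, whence $w\equiv 0$.

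Fix $g\in C_0^\infty(\Pi_T^-)$, supported in $\{x\le-\delta\}$ for some $\delta>0$ and in a compact $t$-interval. I would produce the required $\phi$ by solving the backward adjoint problem $\phi_t+b\phi_x+\phi_{xxx}+\phi_{xyy}=g$ with $\phi\big|_{t=T}=0$, $\phi\big|_{x=0}=0$, the $y$-conditions of \eqref{1.4}, and decay at $-\infty$. The substitution $t\mapsto T-t$ turns this into a forward-in-time equation with zero initial data but with the spatial dispersion reversed in sign; the further reflection $x\mapsto -x$ restores the original form and maps the half-strip $\{x<0\}$ onto $\Sigma_+=\mathbb R_+\times(0,L)$. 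Writing $\Phi(s,\xi,y)=\phi(T-s,-\xi,y)$, one checks that $\Phi$ solves
\[
\Phi_s+b\Phi_\xi+\Phi_{\xi\xi\xi}+\Phi_{\xi yy}=G,\qquad \Phi\big|_{s=0}=0,\qquad \Phi\big|_{\xi=0}=0
\]
on $\Sigma_+$, with $G(s,\xi,y)=-g(T-s,-\xi,y)$ smooth and compactly supported in $\{\xi\ge\delta\}$, together with the same $y$-conditions, which are invariant under both substitutions. This is precisely a forward linear initial-boundary value problem of type \eqref{2.1} on a half-strip with a single Dirichlet condition at the finite boundary, which is well posed and regularity-improving by the half-strip theory of \cite{F17} (the analogue of Lemma~\ref{L2.1}). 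Since $G$ is smooth and compactly supported, $\Phi$ lies in the spaces $Y_k$ for every $k$, and in particular decays rapidly in $\xi$; undoing the two substitutions yields $\phi$ with all the regularity and traces demanded of an admissible test function.

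The main point to be checked carefully is exactly this last construction: that the reflected problem on $\Sigma_+$ carries the single condition $\Phi\big|_{\xi=0}=0$ (matching the count, a forward equation on the right half-line $\xi>0$ requiring one boundary condition at $\xi=0$, dually to the two conditions imposed at $x=0$ for the forward problem on $\{x<0\}$), and that the solution furnished by the half-strip theory indeed belongs to $L_2(0,T;\widetilde H^2)$ with $\phi_t,\phi_{xxx},\phi_{xyy}\in L_2$ and attains the prescribed traces at $x=0$, on the lateral boundaries, and at $x=-\infty$. Granting this, the homogeneous identity gives $\iiint_{\Pi_T^-} w\,g\,dxdydt=0$ for all $g\in C_0^\infty(\Pi_T^-)$, and density of $C_0^\infty(\Pi_T^-)$ in $L_2(\Pi_T^-)$ forces $w\equiv 0$, establishing uniqueness.
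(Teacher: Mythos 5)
Your argument is correct and is essentially the paper's own proof: the paper likewise establishes uniqueness in $L_2(\Pi_T^-)$ by the H\"olmgren duality argument, citing \cite{F17} for solvability of the backward adjoint problem in $\Pi_T^-$ with data $\phi\big|_{t=T}=0$, $\phi\big|_{x=0}=0$ and the $y$-conditions of \eqref{1.4}, with a solution regular enough ($u\in C([0,T];\widetilde H^3(\Sigma_-))$, $u_t\in C([0,T];L_2(\Sigma_-))$) to serve as a test function. You have merely made explicit the change of variables $(t,x)\mapsto(T-t,-x)$ reducing that backward problem to the forward half-strip problem of \cite{F17} and the count of boundary conditions, both of which the paper leaves implicit.
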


\begin{proof}
According to \cite{F17} the backward problem in $\Pi_T^-$ for equation \eqref{2.1} with boundary conditions $u\big|_{t=T}=0$, $u\big|_{x=0}=0$ and \eqref{1.4} for $f\in C_0^\infty(\Pi_T^-)$ has a solution $u\in C([0,T];\widetilde H^3(\Sigma_-))$, $u_t\in C([0,T];L_2(\Sigma_-))$, therefore, the desired result is obtained via the standard 
H\"olmgren's argument.
\end{proof}

\begin{lemma}\label{L2.3}             
Let $u_0\equiv 0$, $\nu_0, \nu_1\in C_0^\infty(B_+)$, $f\equiv 0$. Then there exists a solution $u(t,x,y)$ to problem \eqref{2.1}, \eqref{1.2}, \eqref{1.4}, \eqref{2.9} such that $\partial^j_t u \in C_b(\overline{\mathbb R}_+^t; \widetilde H^n(\Sigma_-))$ for any $j$ and $n$.
\end{lemma}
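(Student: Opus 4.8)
The plan is to construct the solution explicitly, separating the transverse variable and reducing the problem to a family of one-dimensional equations on the left half-line, to which the Fourier method in $t$ applies. First I would expand the boundary data in the eigenfunction basis $\{\psi_l\}$ attached to the conditions \eqref{1.4}: writing $\nu_0(t,y)=\sum_l\nu_{0,l}(t)\psi_l(y)$ and $\nu_1(t,y)=\sum_l\nu_{1,l}(t)\psi_l(y)$, the coefficients are smooth and compactly supported in $t\in\mathbb R_+$, and since $\nu_0,\nu_1\in C_0^\infty(B_+)$ they decay, together with all their $t$-derivatives, faster than any power of $l$. Seeking $u(t,x,y)=\sum_l u_l(t,x)\psi_l(y)$ and using $\psi_l''=-\lambda_l\psi_l$, equation \eqref{2.1} with $f\equiv0$ decouples into
$$
\partial_t u_l+(b-\lambda_l)\partial_x u_l+\partial_x^3 u_l=0,\qquad x<0,\ t>0,
$$
with $u_l|_{t=0}=0$ and the two boundary conditions $u_l|_{x=0}=\nu_{0,l}$, $\partial_x u_l|_{x=0}=\nu_{1,l}$ from \eqref{2.9}. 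Each term $u_l\psi_l$ automatically verifies \eqref{1.4}, so it remains only to solve these one-dimensional problems and to sum the resulting series.

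For fixed $l$ put $c=b-\lambda_l$ and apply the Fourier transform in $t$, extending by zero for $t<0$ (legitimate by causality and $u_l|_{t=0}=0$). Using the notation of \eqref{1.6}, the transform solves $\partial_x^3\widehat u_l+c\,\partial_x\widehat u_l+i\theta\,\widehat u_l=0$, with characteristic cubic $\mu^3+c\mu+i\theta=0$. Replacing $i\theta$ by $p=\varepsilon+i\theta$ with $\varepsilon>0$ and passing to the limit $\varepsilon\to0^+$ (which builds in the radiation condition at $x=-\infty$), I note that for $\mathrm{Re}\,p>0$ a root can never be purely imaginary, so by continuity there are exactly two roots $\mu_1(\theta,l),\mu_2(\theta,l)$ with positive real part throughout the open right half-plane. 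The decaying solution is $\widehat u_l=A_l e^{\mu_1 x}+B_l e^{\mu_2 x}$, with $A_l,B_l$ fixed by $A_l+B_l=\widehat\nu_0(\theta,l)$, $\mu_1A_l+\mu_2B_l=\widehat\nu_1(\theta,l)$; inverting the transform yields an explicit formula for $u_l$. This is precisely the half-strip construction of \cite{F17}.

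Finally I would estimate $\partial_t^j\partial_x^n u_l$ uniformly for $t\ge0$, tracking the dependence on $\lambda_l$ and $\theta$ through the roots $\mu_j$. Here $\partial_t^j$ contributes a factor $(i\theta)^j$, $\partial_x^n$ a factor controlled by powers of $\mu_j$, and $\partial_y^m$ acts on $\psi_l$; the relevant quantities — lower and upper bounds on $\mathrm{Re}\,\mu_j$ and a lower bound on the boundary determinant $\mu_2-\mu_1$ — grow at most polynomially in $\lambda_l\sim l^2$ and in $\theta$. Because $\widehat\nu_0(\theta,l)$ and $\widehat\nu_1(\theta,l)$ are rapidly decreasing in both arguments, any such polynomial loss is harmless, so every norm $\|\partial_t^j u\|_{C_b(\overline{\mathbb R}_+^t;\widetilde H^n(\Sigma_-))}$ is dominated by a convergent series, giving the asserted regularity, while membership in $\widetilde H^n$ in $y$ is inherited term by term from $\psi_l$. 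The hard part will be the root analysis of $\mu^3+c\mu+i\theta=0$: one must control the root that approaches the imaginary axis (the radiation-condition subtlety governing the limit $\varepsilon\to0^+$) and rule out a degeneration of $\mu_2-\mu_1$ on the integration contour, uniformly across the regimes $\theta\to0$, $|\theta|\to\infty$ and the sign change of $c=b-\lambda_l$. Once these polynomially weighted bounds are secured, the summation over $l$ and the passage $\varepsilon\to0^+$ are routine.
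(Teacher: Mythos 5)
Your route is genuinely different from the paper's. The paper proves Lemma~\ref{L2.3} \emph{softly}: it subtracts the lift $\nu_0(t,y)\eta(x+1)+\nu_1(t,y)\,x\eta(x+1)$ to homogenize the boundary conditions, runs a Galerkin scheme in the basis $\varphi_i(x)\psi_m(y)$, and gets all the $t$- and $y$-regularity from the elementary energy identities \eqref{2.13}--\eqref{2.14}, recovering $x$-derivatives afterwards from the equation. The explicit Fourier-in-$t$ construction you propose is reserved in the paper for Lemmas~\ref{L2.4}--\ref{L2.8}, and there is a logical reason for this order: Lemma~\ref{L2.3} supplies the a priori smooth solution to which the Laplace--Fourier transform is then applied in Lemma~\ref{L2.8} to \emph{identify} it with $J_0+J_1$. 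If you build the solution directly from the potentials, you must additionally verify causality (vanishing for $t<0$, hence the zero initial condition) and that the series solves the equation -- doable, but it is the content of several lemmas, not a remark.

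The genuine gap is in your final step, where you claim that polynomially weighted pointwise bounds on the roots make the estimates ``routine.'' They do not, because the termwise $x$-integration fails. The roots $r_j(\theta,b-\lambda_l)$ of \eqref{2.18} selected by the limit $\varepsilon\to+0$ satisfy only $\Re r_j\ge 0$, and for \emph{every} $\theta$ at least one of them is purely imaginary (a purely imaginary root $r=i\omega$ exists whenever $\theta=\omega^3-a\omega$ for some real $\omega$, which happens for all $\theta$; for $\lambda_l<b$ there is even an interval of $\theta$ with several such roots). For such $\theta$ one has $|e^{r_j x}|\equiv 1$ on $x<0$, so $\int_{-\infty}^0|e^{r_jx}\widehat\nu(\theta,l)|^2\,dx=\infty$ for each fixed $\theta$, and no rapid decay of $\widehat\nu$ in $\theta$ or $l$ repairs a divergent $x$-integral. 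The membership of $\partial_t^j\partial_x^n u_l$ in $L_2(\mathbb R_-^x)$ uniformly in $t$ holds only for the full oscillatory integral $\int e^{i\theta t}e^{r_j(\theta,a)x}w(\theta)\,d\theta$, via the change-of-variables estimate \eqref{2.28} imported from \cite{F07-2}; this is the decisive analytic input and is precisely what your sketch defers as ``the hard part'' without supplying it. Either prove an analogue of \eqref{2.28} (together with the lower bound \eqref{2.20} on $r_1-r_2$ to control the boundary determinant), or adopt the paper's Galerkin argument, which avoids the root analysis entirely at this stage.
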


\begin{proof}
Let $v(t,x,y)\equiv u(t,x,y) -\nu_0(t,y)\eta(x+1)-\nu_1(t,y)x\eta(x+1)$, then the original problem is equivalent to the problem of \eqref{2.1}, \eqref{1.2}, \eqref{1.4}, \eqref{2.9} type for the function $v$ with homogeneous initial-boundary conditions and $f\equiv -\nu_{0t}\eta-\nu_{1t}x\eta-b\nu_0\eta'-b\nu_1(x\eta)' -\nu_0\eta'''-\nu_1(x\eta)''' -\nu_{0yy}\eta'-\nu_{1yy}(x\eta)'$. 

Let $\{\varphi_j(x): j=1,2,\dots\}$ be a set of linearly independent functions complete in the space $\{\varphi \in H^3(\mathbb R_-): \varphi(0)=0\}$. We use the Galerkin method and seek an approximate solution in a form $v_k(t,x,y)= \sum\limits_{j,l=1}^k c_{kjl}(t) \varphi_j(x)\psi_l(y)$ (remind that $\psi_l$ are the orthonormal in $L_2(0,L)$ eigenfunctions for the operator $(-\psi'')$ on the segment $[0,L]$ with corresponding boundary conditions) via conditions for $i,m=1,\dots,k$, $t\in [0,T]$
\begin{equation}\label{2.10}
\iint_{\Sigma_-} \bigl(v_{kt}\varphi_i(x)\psi_m(y) -v_k(b\varphi_i'\psi_m+\varphi'''_i\psi_m +\varphi'_i\psi''_m)\bigr)\,dxdy - \iint_{\Sigma_-} f\varphi_i\psi_m\,dxdy=0,
\end{equation}
$c_{kjl}(0)=0$. In particular, $v_k\big|_{t=0}=0$. Moreover, putting in \eqref{2.10} $t=0$, multiplying by $c'_{kim}(0)$ and summing with respect to $i,m$, we obtain that $v_{kt}\big|_{t=0}=0$. Next, differentiating \eqref{2.10} $j$ times with respect to $t$ we derive that
\begin{multline}\label{2.11}
\iint_{\Sigma_-} \bigl(\partial_t^{j+1}v_{k}\varphi_i\psi_m -\partial_t^j v_k(b\varphi_i'\psi_m+\varphi'''_i\psi_m +\varphi'_i\psi''_m)\bigr)\,dxdy \\- \iint_{\Sigma_-} \partial_t^j f\varphi_i\psi_m\,dxdy=0.
\end{multline}
Then by induction with respect to $j$ we find that $\partial_t^j v_k\big|_{t=0} =0$ for all $j$. Since $\psi_m^{(2n)}(y)=(-\lambda_m)^n\psi_m(y)$ it follows from \eqref{2.10} and \eqref{2.11} that for all $j$ and $n$
\begin{multline}\label{2.12}
\iint_{\Sigma_-} \bigl(\partial_t^{j+1}\partial_y^{n} v_{k}\varphi_i\psi_m^{(n)} -
\partial_t^j\partial_y^{n} v_k(b\varphi'_m\psi_m^{(n)}+ \varphi'''_i\psi_m^{(n)}+\varphi'_i\psi_m^{(n+2)})\bigr)\,dxdy \\- 
\iint_{\Sigma_-} \partial_t^j\partial_y^{n}f\varphi_i\psi_m^{(n)}\,dxdy=0.
\end{multline}
Multiplying \eqref{2.12} by $2c^{(j)}_{kim}(t)$ and summing with respect to $i,m$, we find that
\begin{equation}\label{2.13}
\frac d{dt} \iint_{\Sigma_-} (\partial_t^j\partial_y^n v_k)^2\,dxdy +\int_0^L (\partial_t^j\partial_y^n v_{kx})^2\big|_{x=0}\,dy = 
2\iint_{\Sigma_-} \partial_t^j\partial_y^nf \partial_t^j\partial_y^nv_k\,dxdy,
\end{equation}
and, therefore, for all $j$ and $n$
\begin{equation}\label{2.14}
\|\partial_t^jv_k\|_{L_\infty(\mathbb R_+^t;\widetilde H^{(0,n)}(\Sigma_-))} \leq \|\partial_t^j f\|_{L_1(\mathbb R_+^t;\widetilde H^{(0,n)}(\Sigma_-))}.
\end{equation}
Estimate \eqref{2.14} provide existence of a weak solution $v(t,x,y)$ to the considered problem such that $\partial_t^j v \in C_b(\overline{\mathbb R}_+^t;\widetilde H^{(0,n)}(\Sigma_-))\ \forall n, j$ in the following sense: for any $T>0$ and a function $\phi\in L_2(0,T;\widetilde H^2(\Sigma_-))$, such that $\phi_t, \phi_{xxx}, \phi_{xyy}\in L_2(\Pi_T^-)$, $\phi\big|_{t=T}=0$, $\phi\big|_{x=0}=0$, the following equality holds:
\begin{equation}\label{2.15}
\iiint_{\Pi_T^-}\Bigl[v(\phi_t+b\phi_x+\phi_{xxx}+\phi_{xyy}) +f\phi\Bigr]\,dxdydt =0.
\end{equation}
Note, that the traces of the function $v$ satisfy zero condition \eqref{1.2} and condition \eqref{1.4}. Moreover, it follows from \eqref{2.15} that $\partial_t^j\partial_y^n v_{xxx} \in C_b(\overline{\mathbb R}_+^t;H^{(-1,0)}(\Sigma_-))\ \forall n,j$, therefore, $\partial_t^j\partial_y^n v_x \in C_b(\overline{\mathbb R}_+^t;L_2(\Sigma_-))\ \forall n,j$ (see \cite{F17}) and one more application of \eqref{2.15} yields that $\partial_t^j v_{xxx} \in C_b(\overline{\mathbb R}_-^t;\widetilde H^{(0,n)}(\Sigma_-))\ \forall n,$, the function $v$ satisfies the corresponding equation \eqref{2.1} a.e. in $\Pi_T^+$ and its traces satisfy zero conditions \eqref{2.9}. Finally, with the use of induction with respect to $m$ one can find that $\partial^j_t\partial_x^{3m} v \in C_b(\overline{\mathbb R}_+^t; \widetilde H^{(0,n)})$ for all $m,j,n$.
\end{proof}

In what follows, we need some properties of solutions to an algebraic equation
\begin{equation}\label{2.16}
z^3 +az+p=0, \qquad a\in \mathbb R,\quad p=\varepsilon+i\theta \in \mathbb C.
\end{equation}
For $\varepsilon>0$ we denote by $z_1(p,a)$ and $z_2(p,a)$ two roots of this equation with positive real parts (the rest root has the negative real part). Let
\begin{equation}\label{2.17}
r_j(\theta,a) = \lim\limits_{\varepsilon\to +0} z_j(\varepsilon+i\theta),\quad j=1, 2.
\end{equation}
The values $r_j(\theta,a)$ are roots of the equation
\begin{equation}\label{2.18}
r^3+ar+i\theta=0
\end{equation}
and $\Re r_j\geq 0$, $j=1$ and $2$. Moreover, it can be shown with the use of the Cardano formula, that for certain positive constants $c_0$, $c_1$ and all $\theta$ and $a$
\begin{gather}\label{2.19}
|r_j(\theta,a)| \leq c_1(|\theta|^{1/3}+|a|^{1/2}),\quad j=1,2,\\
\label{2.20}
|r_1(\theta,a)-r_2(\theta,a)| \geq  c_0(|\theta|^{1/3}+|a|^{1/2})
\end{gather}
(for more details see, for example, \cite{F07-2}).

Now introduce special solutions of equation \eqref{2.1} for $f\equiv 0$ of "boundary potential" type.

\begin{definition}\label{D2.1}
Let $\nu \in\widetilde{\EuScript S}(\overline{B})$. Define for $x\leq 0$
\begin{gather}\label{2.21}
J_0(t,x,y;\nu) \equiv \sum\limits_{l=1}^{+\infty} \EuScript F^{-1}_t \Bigl[\frac{r_1 e^{r_2x}-r_2e^{r_1x}}{r_1-r_2}\widehat\nu(\theta,l)\Bigr](t) \psi_l(y),\\
\label{2.22}
J_1(t,x,y;\nu) \equiv \sum\limits_{l=1}^{+\infty} \EuScript F^{-1}_t \Bigl[\frac{e^{r_1x}-e^{r_2x}}{r_1-r_2}\widehat\nu(\theta,l)\Bigr](t) \psi_l(y),
\end{gather}
where $\widehat\nu(\theta,l)$ is given by formula \eqref{1.6} and $r_j=r_j(\theta,b-\lambda_l)$ -- by formula \eqref{2.17}.
\end{definition}

\begin{lemma}\label{L2.4}
For any $s\in\mathbb R$ the notion of the function $J_0(t,x,y;\nu)$ can be extended by continuity in the space $C_b(\overline{\mathbb R}^x_-;\widetilde H^{s/3,s}(B))$ to any function $\nu\in \widetilde H^{s/3,s}(B)$. Moreover, for any $n$
\begin{equation}\label{2.23}
\|\partial_x^n J_0(\cdot,\cdot,\cdot;\nu)\|_{ C_b(\overline{\mathbb R}_-^x;\widetilde H^{(s-n)/3.s-n}(B))} \leq 
c(n,b)\|\nu\|_{\widetilde H^{s/3.s}(B)}
\end{equation}
and $J_0\big|_{x=0}=\nu$, $J_{0x}\big|_{x=0}=0$.
\end{lemma}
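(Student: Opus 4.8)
The plan is to work on the Fourier/eigenfunction side, where $J_0$ acts as a Fourier multiplier in $t$ combined with the expansion in $\{\psi_l\}$. Writing $\widehat{J_0}(\theta,x,l)=K_0(\theta,x,l)\,\widehat\nu(\theta,l)$ with the symbol
$$
K_0(\theta,x,l):=\frac{r_1e^{r_2x}-r_2e^{r_1x}}{r_1-r_2},\qquad r_j=r_j(\theta,b-\lambda_l),
$$
everything reduces to pointwise bounds on $K_0$ and its $x$-derivatives, since the norm of $\widetilde H^{s/3,s}(B)$ is the weighted $\ell^2_l L^2_\theta$ norm with weight $(|\theta|^{2/3}+l^2)^{s/2}$ appearing in the definition of the $H^{s/3,s}(B)$ norm. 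The two boundary identities are immediate and I would record them first: at $x=0$ one has $K_0=(r_1-r_2)/(r_1-r_2)=1$, so $J_0\big|_{x=0}=\nu$, while $\partial_xK_0\big|_{x=0}=(r_1r_2-r_2r_1)/(r_1-r_2)=0$, so $J_{0x}\big|_{x=0}=0$.

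The core is a uniform multiplier estimate. Differentiating gives $\partial_x^nK_0=(r_1r_2^ne^{r_2x}-r_2r_1^ne^{r_1x})/(r_1-r_2)$. Since $\Re r_j\ge0$ and $x\le0$, one has $|e^{r_jx}|\le1$; combining the upper bound \eqref{2.19} on $|r_j|$ with the lower bound \eqref{2.20} on $|r_1-r_2|$ yields
$$
|\partial_x^nK_0(\theta,x,l)|\le c(n,b)\,\rho^n,\qquad \rho:=|\theta|^{1/3}+|b-\lambda_l|^{1/2},
$$
uniformly in $x\le0$. The decisive observation is that $\rho$ is controlled by the weight: because the eigenvalues satisfy $\lambda_l\le cl^2$ in all four cases of \eqref{1.4}, one gets $\rho\le c(b)(|\theta|^{2/3}+l^2)^{1/2}$ for every $l\ge1$ and $\theta\in\mathbb R$, hence $\rho^n\le c(n,b)(|\theta|^{2/3}+l^2)^{n/2}$.

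With this in hand the estimate \eqref{2.23} follows directly: at each fixed $x\le0$,
$$
(|\theta|^{2/3}+l^2)^{(s-n)/2}\,|\partial_x^nK_0|\le c(n,b)\,(|\theta|^{2/3}+l^2)^{s/2},
$$
and inserting this into the weighted $\ell^2_l L^2_\theta$ norm of $\partial_x^nJ_0$ bounds it by $c(n,b)\|\nu\|_{\widetilde H^{s/3,s}(B)}$, uniformly in $x$. This uniform bound is exactly the boundedness part of the $C_b(\overline{\mathbb R}^x_-;\cdot)$ statement and, applied to $\nu-\nu_k$ for a sequence $\nu_k\to\nu$ in $\widetilde H^{s/3,s}(B)$ with $\nu_k\in\widetilde{\EuScript S}(\overline B)$, furnishes the continuous extension of $J_0$ to arbitrary $\nu$. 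Continuity in $x$ I would obtain by dominated convergence: for $\nu\in\widetilde{\EuScript S}(\overline B)$ the coefficients $\widehat\nu(\theta,l)$ decay rapidly, $\partial_x^nK_0$ is continuous in $x$ and dominated by the $x$-independent majorant above, so $x\mapsto\partial_x^nJ_0(\cdot,x,\cdot)$ is continuous on the dense set, and the uniform estimate upgrades this to continuity for every $\nu$.

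The step I expect to be the main obstacle is the uniform multiplier bound $|\partial_x^nK_0|\le c(n,b)\rho^n$: the denominator $r_1-r_2$ can degenerate (indeed all three roots collapse at $\theta=0$, $b=\lambda_l$), and it is precisely the lower bound \eqref{2.20} that must be played against the upper bound \eqref{2.19} to keep $K_0$ and its derivatives controlled, while the comparison $\rho\le c(b)(|\theta|^{2/3}+l^2)^{1/2}$ rests on the asymptotics $\lambda_l\sim l^2$ and should be verified case by case in \eqref{1.4}.
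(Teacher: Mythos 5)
Your proposal is correct and follows essentially the same route as the paper: the paper's proof likewise writes $\partial_x^n\widehat J_0(\theta,x,l;\nu)=\frac{r_1r_2^ne^{r_2x}-r_2r_1^ne^{r_1x}}{r_1-r_2}\widehat\nu(\theta,l)$, uses $\Re(r_jx)\leq 0$, and concludes from the upper bound \eqref{2.19} and the lower bound \eqref{2.20}. You merely spell out the intermediate multiplier estimate and the comparison of $|\theta|^{1/3}+|b-\lambda_l|^{1/2}$ with the weight $(|\theta|^{2/3}+l^2)^{1/2}$, which the paper leaves implicit.
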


\begin{proof}
Since 
$$
\partial_x^n \widehat J_0(\theta,x,l;\nu) =   \frac{r_1r_2^n e^{r_2x}-r_2r_1^ne^{r_1x}}{r_1-r_2}\widehat\nu(\theta,l)
$$
and $\Re (r_jx)\leq 0$ the assertion of the lemma follows from \eqref{2.19}, \eqref{2.20}.
\end{proof}

\begin{lemma}\label{L2.5}
For any $s\in\mathbb R$ and $R>0$ the notion of the function $J_1(t,x,y;\nu)$ can be extended by continuity in the space $C([-R,0];\widetilde H^{s/3,s}(B))$ to any function $\nu\in \widetilde H^{s/3,s}(B)$. Moreover, 
\begin{equation}\label{2.24}
\|x^{-1}J_1(\cdot,\cdot,\cdot;\nu)\|_{ C_b(\overline{\mathbb R}_-^x;\widetilde H^{s/3.s}(B))} \leq 
\|\nu\|_{\widetilde H^{s/3.s}(B)},
\end{equation}
for any $n\geq 1$
\begin{equation}\label{2.25}
\|\partial_x^n J_1(\cdot,\cdot,\cdot;\nu)\|_{ C_b(\overline{\mathbb R}_-^x;\widetilde H^{(s-n+1)/3.s-n+1}(B))} \leq  c(n,b)\|\nu\|_{\widetilde H^{s/3.s}(B)}
\end{equation}
and $J_1\big|_{x=0}=0$, $J_{1x}\big|_{x=0}=\nu$.
\end{lemma}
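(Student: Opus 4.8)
The plan is to mirror the proof of Lemma~\ref{L2.4}: work on the transform side, reduce each assertion to a pointwise bound on the Fourier--spectral multiplier attached to the symbol $(e^{r_1x}-e^{r_2x})/(r_1-r_2)$, and then read off the weighted $L_2$ estimate directly from the definition of the norm of $\widetilde H^{s/3,s}(B)$. Throughout I would write $a=b-\lambda_l$ and exploit that $\Re r_j\ge 0$ together with the two-sided root bounds \eqref{2.19}--\eqref{2.20}. The bookkeeping rests on the elementary fact that $0\le\lambda_l\le c(b)\,l^2$, which yields $(|\theta|^{1/3}+|a|^{1/2})^2\le c(b)(|\theta|^{2/3}+l^2)$ and so lets one pass from the root scale to the weight scale of the space.

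For \eqref{2.24} I would use the integral form of the divided difference
\begin{equation*}
\frac{1}{x}\cdot\frac{e^{r_1x}-e^{r_2x}}{r_1-r_2}=\int_0^1 e^{(sr_1+(1-s)r_2)x}\,ds,
\end{equation*}
whose integrand has modulus $e^{(s\Re r_1+(1-s)\Re r_2)x}\le 1$ for $x\le 0$, since $\Re r_j\ge 0$. Hence this multiplier is bounded by $1$ uniformly in $\theta$, $l$ and $x\le 0$, and \eqref{2.24}, with constant exactly $1$, follows at once from the $L_2$-structure of the norm.

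For \eqref{2.25} the symbol of $\partial_x^n J_1$ is $(r_1^n e^{r_1x}-r_2^n e^{r_2x})/(r_1-r_2)$, which I would split as
\begin{equation*}
r_1^n\,\frac{e^{r_1x}-e^{r_2x}}{r_1-r_2}+\frac{r_1^n-r_2^n}{r_1-r_2}\,e^{r_2x}.
\end{equation*}
In the first summand the quotient is bounded by $2/|r_1-r_2|\le c/(|\theta|^{1/3}+|a|^{1/2})$ by \eqref{2.20}, which against $|r_1|^n\le c_1^n(|\theta|^{1/3}+|a|^{1/2})^n$ lowers the homogeneity by one and leaves $(|\theta|^{1/3}+|a|^{1/2})^{n-1}$; in the second summand $(r_1^n-r_2^n)/(r_1-r_2)=\sum_{k=0}^{n-1}r_1^k r_2^{n-1-k}$ is bounded by $n\,c_1^{n-1}(|\theta|^{1/3}+|a|^{1/2})^{n-1}$ while $|e^{r_2x}|\le 1$. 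Converting $(|\theta|^{1/3}+|a|^{1/2})^{n-1}$ into $(|\theta|^{2/3}+l^2)^{(n-1)/2}$ via the comparison above, this pointwise bound turns the weight $(|\theta|^{2/3}+l^2)^{(s-n+1)/2}$ of the target space into $(|\theta|^{2/3}+l^2)^{s/2}$, which is exactly \eqref{2.25}.

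Finally, the boundary conditions are immediate from the symbols: at $x=0$ the symbol of $J_1$ vanishes, giving $J_1|_{x=0}=0$, while the symbol of $\partial_x J_1$ equals $(r_1-r_2)/(r_1-r_2)=1$, giving $J_{1x}|_{x=0}=\nu$. For the extension/continuity statement I would argue first for $\nu\in\widetilde{\EuScript S}(\overline B)$, where the explicit formula and dominated convergence give continuity of $x\mapsto J_1(\cdot,x,\cdot;\nu)$ into $\widetilde H^{s/3,s}(B)$, and then pass to general $\nu$ by density using the uniform bound $\|J_1(\cdot,x,\cdot;\nu)\|_{\widetilde H^{s/3,s}(B)}\le|x|\,\|\nu\|_{\widetilde H^{s/3,s}(B)}\le R\,\|\nu\|_{\widetilde H^{s/3,s}(B)}$ on $[-R,0]$ coming from \eqref{2.24}. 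I expect the main obstacle to be the bookkeeping in \eqref{2.25}—in particular recognizing that the factor $1/|r_1-r_2|$ furnished by \eqref{2.20} is precisely what drops the homogeneity by one power and thereby produces the shifted index $s-n+1$—together with the observation that the linear growth of $J_1$ in $x$, already visible in \eqref{2.24}, is what forces the continuity statement to be confined to the bounded interval $[-R,0]$ rather than all of $\overline{\mathbb R}_-$.
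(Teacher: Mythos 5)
Your proposal is correct and follows essentially the same route as the paper's (very terse) proof: work on the Fourier--spectral side with the symbol $(r_1^ne^{r_1x}-r_2^ne^{r_2x})/(r_1-r_2)$, use $\Re r_j\ge 0$ to get $|\widehat J_1|\le|x\widehat\nu|$ (your divided-difference integral is exactly the mechanism behind the paper's parenthetical remark), and invoke \eqref{2.19}--\eqref{2.20} to drop one power of homogeneity for \eqref{2.25}. You simply supply the multiplier splitting and bookkeeping that the paper leaves implicit.
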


\begin{proof}
Since 
$$
\partial_x^n \widehat J_1(\theta,x,l;\nu) =   \frac{r_1^n e^{r_1x}-r_2^ne^{r_2x}}{r_1-r_2}\widehat\nu(\theta,l)
$$
and $\Re (r_jx)\leq 0$ (in particular, $|\widehat J_1(\theta,x,l;\nu)|\leq |x\widehat\nu(\theta,l)|$) the assertion of the lemma follows from \eqref{2.19}, \eqref{2.20}.
\end{proof}

\begin{remark}\label{R2.1}
In the most important for us case $s\geq 0$ the values $\widehat\nu(\theta,l)$ can be defined directly as limits in $L_2(B)$, for example, of integrals $\displaystyle \int_{-T}^T\!\int_0^L e^{-i\theta t}\psi_l(y)\nu(t,y)\,dtdy$, $T\to +\infty$. Then the functions $J_0(t,x,y;\nu)$ and $J_1(t,x,y;\nu)$ can be equivalently defined simply by formulas \eqref{2.21}, \eqref{2.22}.
\end{remark}

\begin{lemma}\label{L2.6}
If $\nu\in \widetilde H^{(s+1)/3.s+1}(B)$ for certain $s\geq 0$, then for any $j\leq s/3$ there exists
$\partial_t^j J_0(t,x,y;\nu)\in C_b(\mathbb R^t;\widetilde H^{s-3j}(\Sigma_-))$ and uniformly with respect to
$t\in\mathbb R$
\begin{equation}\label{2.26}
\|\partial_t^j J_0(t,\cdot,\cdot;\nu)\|_{\widetilde H^{s-3j}(\Sigma_-)} \leq c(b,s,L) \|\nu\|_{\widetilde H^{(s+1)/3.s+1}(B)}.
\end{equation}
If $\nu\in \widetilde H^{s/3.s}(B)$ for certain $s\geq 0$, then for any $j\leq s/3$ there exists
$\partial_t^j J_1(t,x,y;\nu)\in C_b(\mathbb R^t;\widetilde H^{s-3j}(\Sigma_-))$ and uniformly with respect to
$t\in\mathbb R$
\begin{equation}\label{2.27}
\|\partial_t^j J_1(t,\cdot,\cdot;\nu)\|_{\widetilde H^{s-3j}(\Sigma_-)} \leq c(b,s,L) \|\nu\|_{\widetilde H^{s/3.s}(B)}.
\end{equation}
\end{lemma}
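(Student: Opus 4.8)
The plan is to estimate $J_0$ and $J_1$ mode-by-mode, exploiting the two structural features already set up in the excerpt: the eigenfunction expansion in $y$ (the $\psi_l$ are orthonormal in $L_2(0,L)$) and the Fourier transform in $t$ (through $\widehat\nu(\theta,l)$). Since $\widetilde H^{s-3j}(\Sigma_-)$ is an $L_2$-based space, I would first treat nonnegative integer orders and recover general $s$ from the anisotropic/interpolation definition of $\widetilde H$. For such orders the squared norm of $\partial_t^j J_i$ decouples, by Plancherel in $t$ and orthonormality in $y$, into sums over the relevant $n$ of $\sum_l \int_{\mathbb R} \bigl(\int_{-\infty}^0 |\partial_x^n \widehat{J_i}(\theta,x,l)|^2\,dx\bigr)\,d\theta$, with $\widehat{J_i}(\theta,x,l) = g_i(\theta,x,l)\widehat\nu(\theta,l)$ where $g_i$ denotes the explicit profile of Definition~\ref{D2.1}. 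The time derivatives are harmless: because $J_0$ and $J_1$ satisfy $u_t+bu_x+u_{xxx}+u_{xyy}=0$ for $x<0$ (each summand $e^{i\theta t}\psi_l(y)e^{r_jx}$ does, as $r_j$ solves $r^3+(b-\lambda_l)r+i\theta=0$), one has $\partial_t = -(b\partial_x+\partial_x^3+\partial_x\partial_y^2)$ on these potentials, so every $\partial_t^j$ can be traded for at most $3j$ spatial derivatives; here $j\le s/3$ guarantees $s-3j\ge0$. This reduces all estimates to the base case $j=0$, i.e. to bounding spatial derivatives of $J_0,J_1$ in $L_2(\Sigma_-)$.

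For the base case I would use the derivative formulas already recorded in the proofs of Lemmas~\ref{L2.4} and \ref{L2.5}, namely $\partial_x^n\widehat{J_0} = \dfrac{r_1r_2^ne^{r_2x}-r_2r_1^ne^{r_1x}}{r_1-r_2}\,\widehat\nu$ and $\partial_x^n\widehat{J_1} = \dfrac{r_1^ne^{r_1x}-r_2^ne^{r_2x}}{r_1-r_2}\,\widehat\nu$, and feed them the two uniform symbol bounds $|r_j(\theta,a)|\le c_1(|\theta|^{1/3}+|a|^{1/2})$ from \eqref{2.19} and the separation $|r_1-r_2|\ge c_0(|\theta|^{1/3}+|a|^{1/2})$ from \eqref{2.20}, with $a=b-\lambda_l$. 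Using $|e^{r_jx}|\le1$ for $x\le0$ together with the half-line integral of $|e^{r_jx}|^2$, the inner $x$-integral should collapse into a multiple of a power of $(|\theta|^{1/3}+|a|^{1/2})$ times $|\widehat\nu(\theta,l)|^2$. Since $\lambda_l$ plays the role of $l^2$ in the $\widetilde H^{s/3,s}(B)$ norm, the resulting weight is of the form $(|\theta|^{2/3}+\lambda_l)^{(s+1)/2}$, i.e. the square of the $\widetilde H^{(s+1)/3,s+1}(B)$ weight for $J_0$ (one order lower for $J_1$, which is why \eqref{2.27} only asks for $\nu\in\widetilde H^{s/3,s}(B)$); summing in $l$ and integrating in $\theta$ then yields \eqref{2.26} and \eqref{2.27}. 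The single extra power $(|\theta|^{1/3}+|a|^{1/2})$ separating the hypothesis norm from the conclusion norm is exactly the derivative ``lost'' to integration over the half-line in $x$, in analogy with the trace identity \eqref{1.7}.

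The passage from the $L_2(\mathbb R^t;\cdot)$ bound to membership in $C_b(\mathbb R^t;\widetilde H^{s-3j}(\Sigma_-))$ with a uniform-in-$t$ estimate I would carry out exactly as the boundedness in $x$ was obtained in Lemmas~\ref{L2.4} and \ref{L2.5}: the symbol estimates render the $\theta$-integrand absolutely integrable, so one may differentiate under the integral sign and invoke dominated convergence to get continuity in $t$ together with the pointwise bound, first for $\nu\in\widetilde{\EuScript S}(\overline B)$ and then, by density, for all $\nu$ in the stated space.

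The step I expect to be the main obstacle is the \emph{half-line} $L_2$-in-$x$ estimate, i.e. controlling $\int_{-\infty}^0|\partial_x^n\widehat{J_i}|^2\,dx$ uniformly. The bound $|e^{r_jx}|\le1$ alone is not enough: integrability at $x\to-\infty$ demands quantitative lower control of $\Re r_j$, yet the roots approach the imaginary axis precisely for small $|\theta|$ and for the modes with $b-\lambda_l>0$, where the naive factor fails to be integrable. Reconciling this with the clean weight of the previous paragraph is the delicate point, and it is here that the root-separation estimate \eqref{2.20}, the gain of one derivative in the hypotheses, and the specific algebraic form of the numerators in $g_0,g_1$ (which are tied to the boundary normalizations $J_0|_{x=0}=\nu$, $J_{0x}|_{x=0}=0$ and $J_1|_{x=0}=0$, $J_{1x}|_{x=0}=\nu$) must be combined; I would isolate the low-frequency/near-resonant band in $(\theta,l)$ and treat it separately from the high-frequency regime, where $\Re r_j\sim|\theta|^{1/3}$ makes the $x$-integral elementary.
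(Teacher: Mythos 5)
Your overall architecture matches the paper's: work mode-by-mode in $(\theta,l)$, use the explicit formulas for $\partial_x^n\widehat J_i$ together with the root bounds \eqref{2.19}, \eqref{2.20}, reduce everything to a half-line $L_2$-in-$x$ estimate, and interpolate for noninteger $s$. (Your device of trading $\partial_t^j$ for spatial derivatives via the equation is a harmless variant; the paper simply inserts the factor $(i\theta)^j\le(|\theta|^{1/3}+|a|^{1/2})^{3j}$ into the symbol.) But the proof has a genuine gap at exactly the point you flag as the ``main obstacle,'' and the two mechanisms you offer for circumventing it do not work. First, Plancherel in $t$ is not available: the target norm is $C_b(\mathbb R^t;\widetilde H^{s-3j}(\Sigma_-))$, a fixed-$t$ bound uniform in $t$, not an $L_2$-in-$t$ bound; and your proposed upgrade from $L_2(\mathbb R^t;\cdot)$ to $C_b(\mathbb R^t;\cdot)$ via absolute integrability of the $\theta$-integrand plus dominated convergence fails for general $\nu$ in the stated Sobolev class, where the symbol times $\widehat\nu$ is only square-integrable against the weight. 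Second, the inner $x$-integral does \emph{not} collapse via $\int_{-\infty}^0|e^{r_jx}|^2dx=1/(2\Re r_j)$: for every mode with $\lambda_l<b$ there is a whole interval of $\theta$ on which equation \eqref{2.18} has purely imaginary roots with $\Re r_j=0$, so $e^{r_jx}$ does not decay at all and the naive integral diverges on a set of positive measure; a high-frequency/low-frequency splitting cannot repair this, because the degenerate regime is not confined to a negligible band. (One can also check that even where $\Re r_j\sim|r_j|$ your computation produces the weight $(|\theta|^{1/3}+|a|^{1/2})^{n-1/2}$, not the $(|\theta|^{1/3}+|a|^{1/2})^{n+1}$ demanded by \eqref{2.26}, so the power counting does not come out of this route.)

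What actually powers the paper's proof is the estimate \eqref{2.28} imported from \cite{F07-2}: for $I(t,x)=\int_{\mathbb R}e^{i\theta t}e^{r_j(\theta,a)x}w(\theta)\,d\theta$ one has, \emph{uniformly in} $t$, $\|I(t,\cdot)\|_{L_2(\mathbb R_-)}\le c\|(|\theta|^{1/3}+|a|^{1/2})w\|_{L_2(\mathbb R)}$. This is a local-smoothing--type oscillatory-integral bound; in the purely imaginary regime it is obtained by the change of variables $\xi=\Im r_j(\theta)$ followed by Plancherel in $x$, with control of the degenerating Jacobian at the critical points of $\rho\mapsto\rho^3-a\rho$ --- not by pointwise bounds on $e^{r_jx}$. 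Once \eqref{2.28} is in hand, the rest of your outline (orthogonality of $\{\psi_l^{(m)}\}$ in $y$, summation over $l$ with $\lambda_l\sim l^2$, and interpolation) coincides with the paper's \eqref{2.29}--\eqref{2.31}. So the proposal is incomplete precisely at its load-bearing step; either prove the analogue of \eqref{2.28} or cite it.
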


\begin{proof}
The proof is based on the following inequality, established in \cite{F07-2}: let
$$
I(t,x) \equiv \int_{\mathbb R} e^{i\theta t} e^{r_j(\theta,a)x}w(\theta)\,d\theta,
$$
where $r_j(\theta,a)$, $j=1$ and $2$, are the roots of equation \eqref{2.18}, defined in \eqref{2.17}. Then there exists a positive constant $c$, such that uniformly with respect to $t\in\mathbb R$
\begin{equation}\label{2.28}
\|I(t,\cdot)\|_{L_2(\mathbb R_-)} \leq c\|(|\theta|^{1/3} +|a|^{1/2})w(\theta)\|_{L_2(\mathbb R)}.
\end{equation}

Now let
$$
J(t,x,y)\equiv  \sum\limits_{l=1}^{+\infty} \int_{\mathbb R}  e^{i\theta t} e^{r_j(\theta,b-\lambda_l)x}
w(\theta,l)\,d\theta \,\psi_l^{(m)}(y).
$$
Then it follows from \eqref{2.28} that uniformly with respect to $t\in\mathbb R$ since the system $\{\psi_l^{(m)}\}$ is also orthogonal in $L_2(0,L)$ and $\|\psi_l^{(m)}\|_{L_2(0,L)}\leq c(l/L)^m$
\begin{multline}\label{2.29}
\|J(t,\cdot,\cdot)\|_{L_2(\Sigma_-)} =
\Bigl(\sum\limits_{l=1}^{+\infty} 
\Bigl\|\int_{\mathbb R}  e^{i\theta t} e^{r_j(\theta,b-\lambda_l)x}
w(\theta,l)\,d\theta 
\Bigr\|_{L_2(\mathbb R_-^x)}^2\Bigl\|\psi^{(m)}_l\Bigr\|_{L_2(0,L)}^2\Bigr)^{1/2} \\ \leq
c(m,L)\Bigl(\sum\limits_{l=1}^{+\infty} \left\|\bigl(|\theta|^{1/3}+|b-\lambda_l|^{1/2}\bigr)w(\theta,l)\right\|_{L_2(\mathbb R^\theta)}^2 l^{2m}\Bigr)^{1/2}.
\end{multline}

Without loss of generality one can assume that $\nu \in\widetilde{\EuScript S}(\overline{B})$. Let $s$ be integer. Then for $3j+n+m=s$
\begin{equation}\label{2.30}
\partial_t^j \partial_x^n \partial_y^m J_0(t,x,y;\nu) = \sum\limits_{l=1}^{+\infty} \frac1{2\pi}
\int_{\mathbb R} (i\theta)^j \frac{r_1r_2^n e^{r_2x}-r_2r_1^ne^{r_1x}}{r_1-r_2}\widehat\nu(\theta,l)\,d\theta\, \psi_l^{(m)}(y)
\end{equation}
and inequalities \eqref{2.19}, \eqref{2.20} and \eqref{2.29} yield that
\begin{multline*}
\|\partial_t^j \partial_x^n \partial_y^m J_0(t,\cdot,\cdot;\nu)\|_{L_2(\Sigma_-)} \leq
c\Bigl(\sum\limits_{l=1}^{+\infty} \left\|\bigl(|\theta|^{2/3}+l^2\bigr)^{(3j+n+m+1)/2}\widehat\nu(\theta,l)\right\|_{L_2(\mathbb R^\theta)}^2\Bigr)^{1/2}  \\ =
c\|\nu\|_{\widetilde H^{(s+1)/3.s+1}(B)}.
\end{multline*}
Similarly,
\begin{equation}\label{2.31}
\partial_t^j \partial_x^n \partial_y^m J_1(t,x,y;\nu) = \sum\limits_{l=1}^{+\infty} \frac1{2\pi}
\int_{\mathbb R} (i\theta)^j  \frac{r_1^n e^{r_1x}-r_2^ne^{r_2x}}{r_1-r_2}\widehat\nu(\theta,l)\,d\theta\, \psi_l^{(m)}(y)
\end{equation}
and, therefore,
\begin{multline*}
\|\partial_t^j \partial_x^n \partial_y^m J_1(t,\cdot,\cdot;\nu)\|_{L_2(\Sigma_-)} \leq
c\Bigl(\sum\limits_{l=1}^{+\infty} \left\|\bigl(|\theta|^{2/3}+l^2\bigr)^{(3j+n+m)/2}\widehat\nu(\theta,l)\right\|_{L_2(\mathbb R^\theta)}^2\Bigr)^{1/2}  \\ =
c\|\nu\|_{\widetilde H^{s/3.s}(B)}.
\end{multline*}
Finally, use interpolation.
\end{proof}

\begin{lemma}\label{L2.7}
Let $\nu\in \widetilde H^{s/3,s}(B)$, then for any $T>0$
\begin{equation}\label{2.32}
\|J_1(\cdot,\cdot,\cdot;\nu)\|_{ C_b(\overline{\mathbb R}_-^x;\widetilde H^{(s+1)/3.s+1}(B_T))} \leq  c(T,b,s,L)\|\nu\|_{\widetilde H^{s/3.s}(B)}.
\end{equation}
\end{lemma}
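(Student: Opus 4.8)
The plan is to reduce the restriction norm on the bounded strip $B_T$ to a full-line norm of a time-truncated potential and then to split into frequency regimes. Fix $x\le 0$. Using the cut-off $\eta$, choose $\chi\in C_0^\infty(\mathbb R)$ with $\chi\equiv 1$ on $[0,T]$ and $\operatorname{supp}\chi\subset(-1,T+1)$, and set $w(t,x,y)=\chi(t)J_1(t,x,y;\nu)$. Since $w$ coincides with $J_1$ on $B_T$, the definition of $\widetilde H^{(s+1)/3,s+1}(B_T)$ as a restriction norm yields
\[
\|J_1(\cdot,x,\cdot;\nu)\|_{\widetilde H^{(s+1)/3,s+1}(B_T)}\le \|w(\cdot,x,\cdot)\|_{\widetilde H^{(s+1)/3,s+1}(B)},
\]
so it suffices to bound the right-hand side by $c(T,b,s,L)\|\nu\|_{\widetilde H^{s/3,s}(B)}$ uniformly in $x\le 0$. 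By the usual reduction (treat integer $s$ and interpolate, exactly as in Lemma~\ref{L2.6}) I may assume $s$ to be a non-negative integer. The whole difficulty is that one must gain a full extra weight $(|\theta|^{2/3}+l^2)^{1/2}$ over the datum $\nu$.

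The gain is immediate on the ``good'' part of the spectrum. Writing, from \eqref{2.22},
\[
\widehat{J_1}(\theta,x,l)=\frac{e^{r_1x}-e^{r_2x}}{r_1-r_2}\,\widehat\nu(\theta,l),\qquad r_j=r_j(\theta,b-\lambda_l),
\]
and using $\Re(r_jx)\le0$ (hence $|e^{r_jx}|\le1$) together with \eqref{2.19}, \eqref{2.20}, one checks that the symbol
\[
m(\theta,l,x)=(|\theta|^{2/3}+l^2)^{1/2}\,\frac{e^{r_1x}-e^{r_2x}}{r_1-r_2}
\]
is bounded uniformly in $x\le 0$ as soon as $|\theta|\ge1$, and, for all $\theta$, whenever $l$ stays away from the finitely many indices with $\lambda_l$ close to $b$; indeed there $(|\theta|^{2/3}+l^2)^{1/2}\le c(|\theta|^{1/3}+|b-\lambda_l|^{1/2})\le c|r_1-r_2|$. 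On this region multiplication by $m$ carries the weighted space of $\widehat\nu$ into that of $\widehat{J_1}$ with a gain of one order, and since truncation by $\chi$ only decreases the $B_T$-norm the estimate follows directly from \eqref{2.19}, \eqref{2.20}, with no use of the boundedness of $T$.

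The main obstacle is the complementary region: low frequencies $|\theta|\lesssim1$ for the finitely many resonant modes $\lambda_l\approx b$. There the pointwise multiplier bound genuinely fails, for as $\theta\to0$ one has $r_1,r_2\to0$ and $\frac{e^{r_1x}-e^{r_2x}}{r_1-r_2}\to x$, so $m$ not only misses the gain but grows linearly in $x$; this is precisely why the statement can hold only on a bounded time interval and forces the dependence of the constant on $T$. Here the truncation $\chi$ must do real work. Since on $|\theta|\lesssim1$, $l\le l_0$ the weight $(|\theta|^{2/3}+l^2)^{(s+1)/2}$ is bounded, it is enough to control $\|\chi J_1(\cdot,x,\cdot)\|_{L_2(B)}$ on these finitely many modes uniformly in $x$; expanding in $\psi_l$, this reduces to one-dimensional (Airy-type) boundary potentials $\EuScript F_t^{-1}\bigl[(e^{r_1x}-e^{r_2x})/(r_1-r_2)\,\widehat\nu\bigr](t)$ cut off by $\chi(t)$. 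I would estimate these by writing the convolution $\widehat{\chi}\ast_\theta\widehat{J_1}$ and exploiting the dispersive decay of the kernel in $x$ of the type quantified in \cite{F07-2} (the prototype being \eqref{2.28}): the linear-in-$x$ behaviour is confined to $|\theta|\lesssim|x|^{-3}$, and integrating this against the Schwartz function $\widehat\chi$ produces an $x$-uniform bound. Adding the two contributions and interpolating back in $s$ then gives \eqref{2.32}. I expect this low-frequency, resonant estimate to be the only delicate point, the remainder being a direct consequence of the symbol bounds \eqref{2.19}, \eqref{2.20} already in hand.
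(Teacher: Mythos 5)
Your decomposition of the frequency space is the same as the paper's: the gain of one full order comes for free from $|r_1-r_2|\geq c_0(|\theta|^{1/3}+|b-\lambda_l|^{1/2})$ except on the resonant block $\{|\theta|\lesssim 1,\ l\leq l_0\}$, and you correctly identify that block (where $(e^{r_1x}-e^{r_2x})/(r_1-r_2)\to x$ as $\theta\to 0$) as the only place where the boundedness of the time interval is needed. Where you diverge is in how that block is handled. The paper does not work on the Fourier side there at all: it applies Lemma~\ref{L2.6} (estimate \eqref{2.27}) to the band-limited piece $\nu_0$ to get $\sup_t\|\partial_t^jJ_1(t,\cdot,\cdot;\nu_0)\|_{\widetilde H^{m+1}(\Sigma_-)}\lesssim\|\nu\|_{\widetilde H^{s/3,s}(B)}$, takes the trace onto the line $\{x\}\times(0,L)$ (uniform in $x\leq 0$), and then pays the factor $T^{1/2}$ simply by H\"older in $t$ over $(0,T)$; the $x$-uniformity is thus inherited from the $L_2(\mathbb R_-^x)$ smoothing bound \eqref{2.28} through a trace theorem, not proved directly. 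Your route --- estimating $\widehat\chi*\widehat{J_1}$ for the truncated potential --- can also be closed, and in fact more simply than you suggest: the only input needed is the elementary pointwise bound $|(e^{r_1x}-e^{r_2x})/(r_1-r_2)|\leq\min\bigl(|x|,\,2/|r_1-r_2|\bigr)\lesssim\min(|x|,|\theta|^{-1/3})$, whose crossover at $|\theta|\sim|x|^{-3}$ makes its $L_2$ norm over $\{|\theta|\leq\theta_0\}$ bounded uniformly in $x\leq 0$; then Cauchy--Schwarz in $\theta'$ and Young's inequality for $\widehat\chi*(m\widehat\nu)$ finish it, with no stationary phase. Two points in your sketch need tightening if you go this way: (i) \eqref{2.28} is not the right prototype --- it controls the $L_2$ norm in $x$ uniformly in $t$, whereas you need an $x$-uniform bound in $t$, which is the transposed statement; and (ii) ``the weight is bounded on these finitely many modes, so it is enough to control the $L_2(B)$ norm'' is not literally true after multiplication by $\chi$, since convolution with $\widehat\chi$ spreads the $\theta$-support; you must carry the weight through the convolution via a Peetre-type inequality $(1+|\theta|)^\sigma\lesssim(1+|\theta-\theta'|)^\sigma$ on the support of $\widehat\nu_0$. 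With those repairs your argument is a valid alternative to the paper's; the paper's version buys economy (it reuses Lemma~\ref{L2.6} verbatim), yours keeps everything on the symbol level and makes the origin of the $T$-dependence more transparent.
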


\begin{proof}
Without loss of generality one can assume that $\nu \in\widetilde{\EuScript S}(\overline{B})$. There exists $l_0$ such that for $l>l_0$and all $\theta$ and there exists $\theta_0\geq 1$ such that for $|\theta|\geq \theta_0$ and all $l$
\begin{equation}\label{2.33}
|r_1(\theta,b-\lambda_l)-r_2(\theta,b-\lambda_l)|\geq c_0(|\theta|^{1/3}+l).
\end{equation}
Divide $\nu$ into two parts:
$$
\nu_0(t,y)\equiv \sum\limits_{l=1}^{l_0} \EuScript F^{-1}_t\bigr[ \widehat \nu(\theta,l)\eta(\theta_0+1-|\theta|)\bigr](x)\psi_l(y),\quad
\nu_{1}(t,y)\equiv \nu(t,y)-\nu_0(t,y).
$$
For $\nu_0$ inequality \eqref{2.27} yields, that for any $j$ and $m$
\begin{multline*}
\|\partial_t^j\partial_y^m J_1(\cdot,\cdot,\cdot;\nu_0)\|_{C_b(\overline{\mathbb R}_-^x;L_2(B_T))} \leq
T^{1/2}\sup\limits_{t\in [0,T]} \|\partial_t^j J_1(t,\cdot,\cdot;\nu_0)\|_{\widetilde H^{m+1}(\Sigma_-)} \\ \leq
c(T,b,j,m,L)\|\nu_0\|_{\widetilde H^{(3j+m+1)/3,3j+m+1}(B)} \leq c(T,b,s,j,m,L) \|\nu\|_{\widetilde H^{s/3.s}(B)}.
\end{multline*}
For $\nu_1$ by virtue of \eqref{2.33}
\begin{multline*}
\|J_1(\cdot,\cdot,\cdot;\nu_0)\|_{ C_b(\overline{\mathbb R}_-^x;\widetilde H^{(s+1)/3.s+1}(B))}  \\ \leq
c\Bigl(\sum\limits_{l=1}^{+\infty} \|\frac{(|\theta|^{2/3}+l^2)^{(s+1)/2}}{|\theta|^{1/3}+l}
\widehat\nu_1(\theta,l)\|^2_{L_2(\mathbb R^\theta)}\Bigr)^{1/2} \leq
c_1(b,s) \|\nu\|_{\widetilde H^{s/3.s}(B)}.
\end{multline*}
\end{proof}

\begin{lemma}\label{L2.8}
Let $\nu_0\in \widetilde H^{1/3,1}(B)$, $\nu_1\in L_2(B)$ and $\nu_0(t,y)=\nu_1(t,y)=0$ for $t<0$, then the function $u(t,x,y)\equiv J_0(t,x,y;\nu_0)+J_1(0,x,y;\nu_1)$ for any $T>0$ is a weak solution from the space$Y_0(\Pi_T^-)$ to problem \eqref{2.1} (for $f\equiv 0$), \eqref{1.2} (for $u_0\equiv 0$), \eqref{1.4}, \eqref{2.9}.
\end{lemma}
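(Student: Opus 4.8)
The plan is to show that the explicit function $u=J_0(t,x,y;\nu_0)+J_1(t,x,y;\nu_1)$ satisfies every requirement, reading off its regularity and boundary traces from Lemmas~\ref{L2.4}--\ref{L2.7} and reserving a genuine computation only for the integral identity and for causality. First I would check that $u\in Y_0(\Pi_T^-)$, which for $k=0$ means $u\in C([0,T];L_2(\Sigma_-))$, $u\in C_b(\overline{\mathbb R}_-^x;\widetilde H^{1/3,1}(B_T))$ and $u_x\in C_b(\overline{\mathbb R}_-^x;L_2(B_T))$. Since $\nu_1\in L_2(B)=\widetilde H^{0,0}(B)$ and $\nu_0\in\widetilde H^{1/3,1}(B)$, these follow by superposition: Lemma~\ref{L2.6} with $s=0$ yields $J_0,J_1\in C_b(\mathbb R^t;L_2(\Sigma_-))$; Lemma~\ref{L2.4} with $s=1$ (for $n=0$ and $n=1$) yields $J_0\in C_b(\overline{\mathbb R}_-^x;\widetilde H^{1/3,1}(B))$ and $J_{0x}\in C_b(\overline{\mathbb R}_-^x;L_2(B))$; and Lemma~\ref{L2.7} with $s=0$ together with Lemma~\ref{L2.5} (for $n=1$, $s=0$) yields $J_1\in C_b(\overline{\mathbb R}_-^x;\widetilde H^{1/3,1}(B_T))$ and $J_{1x}\in C_b(\overline{\mathbb R}_-^x;L_2(B))$. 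These are precisely the defining properties of $Y_0(\Pi_T^-)$, and they make the traces at $x=0$ meaningful.

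Next I would verify the side conditions. The relations $J_0\big|_{x=0}=\nu_0$, $J_{0x}\big|_{x=0}=0$ from Lemma~\ref{L2.4} and $J_1\big|_{x=0}=0$, $J_{1x}\big|_{x=0}=\nu_1$ from Lemma~\ref{L2.5} give $u\big|_{x=0}=\nu_0$ and $u_x\big|_{x=0}=\nu_1$, that is \eqref{2.9}; conditions \eqref{1.4} hold automatically because the expansions \eqref{2.21}, \eqref{2.22} are built from the eigenfunctions $\psi_l$, which satisfy them. In the $(\theta,l)$-representation the multipliers in \eqref{2.21}, \eqref{2.22} are linear combinations of $e^{r_1x}$ and $e^{r_2x}$, and since $r_j=r_j(\theta,b-\lambda_l)$ solve \eqref{2.18} with $a=b-\lambda_l$, the transform of $u$ solves $\widehat u_{xxx}+(b-\lambda_l)\widehat u_x+i\theta\widehat u=0$; hence $u$ satisfies \eqref{2.1} with $f\equiv 0$. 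For the homogeneous initial condition \eqref{1.2} I would invoke causality: as $\nu_0,\nu_1$ vanish for $t<0$, their transforms extend analytically into $\{\Im\theta<0\}$, while $e^{r_j(\theta,\cdot)x}$ for $x\le 0$ is the boundary value ($\varepsilon\to+0$) of a function analytic in $\{\Re p>0\}$ with growth controlled by \eqref{2.19}, \eqref{2.20}; consequently $J_0$ and $J_1$ are supported in $t\ge 0$, and by the continuity in $t$ established above $u\big|_{t=0}=0$.

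It then remains to confirm the integral identity, which, adapting Definition~\ref{D1.1} to the half-strip with $\mu_0$, $u_0$, $f$ absent and $x=0$ in the role of $x=R$, reads
\[
\iiint_{\Pi_T^-} u\bigl(\phi_t+b\phi_x+\phi_{xxx}+\phi_{xyy}\bigr)\,dxdydt
+\iint_{B_T}\bigl[\nu_1\phi_x\big|_{x=0}-\nu_0\phi_{xx}\big|_{x=0}\bigr]\,dydt=0
\]
for all admissible $\phi$ with $\phi\big|_{t=T}=0$ and $\phi\big|_{x=0}=0$. For $\nu_0,\nu_1\in\widetilde{\EuScript S}(\overline B)$ vanishing for $t<0$ the function $u$ is smooth and rapidly decaying in $x$ (Lemma~\ref{L2.6} for all $s$), so the identity follows by integrating by parts three times in $x$ and using the equation together with $u\big|_{t=0}=0$, $u\big|_{x=0}=\nu_0$, $u_x\big|_{x=0}=\nu_1$ and conditions \eqref{1.4}; the boundary terms at $x=-\infty$ and in $y$ all vanish.

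The general case follows by density: approximating $\nu_0$ in $\widetilde H^{1/3,1}(B)$ and $\nu_1$ in $L_2(B)$ by such smooth data and using the continuous dependence of $J_0,J_1$ furnished by Lemmas~\ref{L2.4}--\ref{L2.7}, every term passes to the limit. I expect the main obstacle to be precisely this passage: the bulk term converges readily because $u\to u$ in $L_2(\Pi_T^-)$ while $\phi_t+b\phi_x+\phi_{xxx}+\phi_{xyy}\in L_2(\Pi_T^-)$, and $\iint_{B_T}\nu_1\phi_x\big|_{x=0}$ converges since $\phi_x\big|_{x=0}\in L_2(B_T)$ (Remark~\ref{R1.1}), but the term $\iint_{B_T}\nu_0\phi_{xx}\big|_{x=0}$ pairs the low-regularity datum $\nu_0\in\widetilde H^{1/3,1}$ against the trace $\phi_{xx}\big|_{x=0}$; one must verify that this duality pairing is continuous in the norms controlled by $Y_0(\Pi_T^-)$ and survives the limit, which is exactly where the sharp indices in Lemmas~\ref{L2.4} and~\ref{L2.7} are needed.
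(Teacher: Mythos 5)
Your proof is essentially correct, but it takes a genuinely different route from the paper. You verify directly that $J_0(\cdot;\nu_0)+J_1(\cdot;\nu_1)$ has the required regularity (via Lemmas~\ref{L2.4}--\ref{L2.7}), satisfies equation \eqref{2.1} (by the transform computation), takes the right traces, and vanishes at $t=0$ by a Paley--Wiener causality argument; the general case then follows by density. The paper instead never verifies these properties for the potentials directly: for $\nu_0,\nu_1\in C_0^\infty(B_+)$ it takes the smooth solution already produced by the Galerkin construction of Lemma~\ref{L2.3}, applies the Laplace--Fourier transform in $t$ and the eigenfunction expansion in $y$, solves the resulting third-order ODE in $x$ (selecting the two roots $z_1,z_2$ with positive real part by the decay requirement at $x=-\infty$), and lets $\varepsilon\to+0$ to conclude that this solution \emph{coincides} with $J_0+J_1$; density plus the H\"olmgren uniqueness of Lemma~\ref{L2.2} finish the argument. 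The trade-off is precisely at the causality step: in the paper the support of $u$ in $t\geq 0$ is inherited for free from the time-domain construction of Lemma~\ref{L2.3}, whereas in your version the claim that $J_0,J_1$ vanish for $t<0$ is the one ingredient not supplied by the quoted Lemmas~\ref{L2.4}--\ref{L2.7}, and you must justify that the symmetric multipliers in \eqref{2.21}, \eqref{2.22} are boundary values of functions analytic in $\{\Re p>0\}$ with at most polynomial growth (removable singularities where $z_1=z_2$, and \eqref{2.19} for the growth) so that Paley--Wiener applies; this is standard but should be written out. One further remark: your concern about the pairing $\iint_{B_T}\nu_0\,\phi_{xx}\big|_{x=0}\,dydt$ in the limit passage is misplaced --- by Remark~\ref{R1.1} the trace $\phi_{xx}\big|_{x=0}$ of an admissible test function lies in $L_2(B_T)$, so convergence of the approximating data in $L_2(B)$ (a fortiori in $\widetilde H^{1/3,1}(B)$) already suffices, and no sharp trace indices for $u$ are needed there.
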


\begin{proof}
First let $\nu_0,\nu_1\in C_0^\infty(B_+)$. Consider the smooth solution $u(t,x,y)$ to the considered problem constructed in Lemma~\ref{L2.3}. For any $p=\varepsilon+i\theta$, 
where $\varepsilon>0$, define the Laplace--Fourier transform-coefficients
$$
\widetilde u(p,x,l) \equiv \int_{\mathbb R_+}\!\! \int_0^L e^{-pt} \psi_l(y) u(t,x,y)\,dydt.
$$
The function $\widetilde u(p,x,l)$ solves a problem
\begin{gather*}
p\widetilde u(p,x,l)+b\widetilde u_x(p,x,l)+\widetilde u_{xxx}(p,x,l)-\lambda_l \widetilde u_x(p,x,l)=0,\\
\widetilde u(p,0,l)=\widetilde \nu_0(p,l)\equiv \int_{\mathbb R_+}\!\! \int_0^L
e^{-pt}\psi_l(y)\nu_0(t,y)\,dydt, \quad
\widetilde u_x(p,0,l)=\widetilde \nu_1(p,l),
\end{gather*}
whence, since $\widetilde u(p,x,l)\to 0$ as $x\to -\infty$, it follows, that
$$
\widetilde u(p,x,l)=\frac {z_1e^{z_2x}-z_2e^{z_1x}}
{z_1-z_2} \widetilde \nu_0(p,l)
+ \frac {e^{z_1x}-e^{z_2x}} {z_1-z_2} \widetilde \nu_1(p,l).
$$
where $z_j=z_j(p,b-\lambda_l)$ are defined in \eqref{2.16} for $a=b-\lambda_l$.
Using the formula of inversion of the Laplace transform we find, that the Fourier coefficients of the function $u(t,x,\cdot)$ are the following:
$$
\widehat u(t,x,l) = e^{\varepsilon t} \EuScript F_t^{-1}\left[\frac {z_1e^{z_2x}-z_2e^{z_1x}}
{z_1-z_2}\widetilde \nu_0(\varepsilon+i\theta,l) + \frac {e^{z_1x}-e^{z_2x}} {z_1-z_2} \widetilde \nu_1(\varepsilon+i\theta,l)\right](t)
$$
and, therefore,
\begin{multline*}
u(t,x,y) \\= \sum\limits_{l=1}^{+\infty} e^{\varepsilon t} \EuScript F_t^{-1}\left[\frac {z_1e^{z_2x}-z_2e^{z_1x}}
{z_1-z_2}\widetilde \nu_0(\varepsilon+i\theta,l) +\frac {e^{z_1x}-e^{z_2x}} {z_1-z_2} \widetilde \nu_1(\varepsilon+i\theta,l)\right](t)\psi_l(y).
\end{multline*}
Passing to the limit as $\varepsilon\to+0$, we derive that $u(t,x,y) \equiv J_0(t,x,y;\nu_0)+J_1(0,x,y;\nu_1)$.

In the general case approximate the function $\mu$ by smooth ones, pass to the limit on the basis of estimates \eqref{2.26}, \eqref{2.27}, \eqref{2.25}, \eqref{2.32} for $s=0$, \eqref{2.23} for $s=1$ and use the uniqueness result. 
\end{proof}

\begin{lemma}\label{L2.9}
Let $u_0\in \widetilde H^k(\Sigma_-)$,
$\nu_0\in \widetilde H^{(k+1)/3,k+1}(B_T)$, $\nu_1\in \widetilde H^{k/3,k}(B_T)$, $f\in M_k(\Pi_T^-)$ for certain $T>0$, $k\geq 0$. Assume also that $\partial_t^j \nu_0(0,y) \equiv \widetilde \Phi_j(0,y)$ for $j<k/3$, $\partial_t^j \nu_1(0,y) \equiv \widetilde\Phi_{jx}(0,y)$ for $j<(k-1)/3$. Then there exists a unique
solution $u(t,x,y)\in Y_k(\Pi_T^-)$ to problem \eqref{2.1}, \eqref{1.2}, \eqref{1.4}, \eqref{2.9} and for any $t_0\in (0,T]$
\begin{multline}\label{2.34}
\|u\|_{Y_k(\Pi_{t_0}^-)}
\leq c(T,k,b,L)\Bigl(\|u_0\|_{\widetilde H^k(\Sigma_-)}+
\|\nu_0\|_{\widetilde H^{(k+1)/3,k+1}(B_T)}+
\|\nu_1\|_{\widetilde H^{k/3,k}(B_T)} \\+
t_0^{1/6}\|f\|_{M_k(\Pi_{t_0}^-)}
 +\sum_{j=0}^{j_0-1} \|\partial_t^j
f\big|_{t=0}\|_{\widetilde H^{k-3(j+1)}(\Sigma_-)}\Bigr), \quad j_0=[(k+1)/3].
\end{multline}
\end{lemma}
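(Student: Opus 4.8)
The plan is to construct $u$ by superposition, splitting the data into an interior part handled by the full-strip potential of Lemma~\ref{L2.1} and a boundary part handled by the boundary potentials $J_0,J_1$ of Lemmas~\ref{L2.4}--\ref{L2.8}. First I would extend $u_0$ from $\widetilde H^k(\Sigma_-)$ to $\widetilde H^k(\Sigma)$ and $f$ from $M_k(\Pi_T^-)$ to $M_k(\Pi_T)$ with norms controlled by the original ones; such bounded extensions exist because the spaces over $\Sigma_-$ are, by definition, restrictions of the corresponding spaces over $\Sigma$. Applying Lemma~\ref{L2.1} to this extended data yields $u^{(1)}\in Y_k(\Pi_T)$ solving \eqref{2.1}, \eqref{1.2}, \eqref{1.4}; its restriction to $\Pi_T^-$ solves the equation with the correct initial profile and right-hand side, but with traces $u^{(1)}\big|_{x=0}$ and $u^{(1)}_x\big|_{x=0}$ that differ from $\nu_0,\nu_1$. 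By the definition of $Y_k$ these traces lie in $\widetilde H^{(k+1)/3,k+1}(B_T)$ and $\widetilde H^{k/3,k}(B_T)$ with norms bounded by the right-hand side of \eqref{2.4}. I then set $\widetilde\nu_0\equiv\nu_0-u^{(1)}\big|_{x=0}$ and $\widetilde\nu_1\equiv\nu_1-u^{(1)}_x\big|_{x=0}$, which belong to the same spaces.

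The compatibility hypotheses enter next. From \eqref{2.5} one has $\partial_t^j u^{(1)}\big|_{t=0}=\widetilde\Phi_j$ on $\Sigma$, hence $\partial_t^j\bigl(u^{(1)}\big|_{x=0}\bigr)\big|_{t=0}=\widetilde\Phi_j(0,y)$ and $\partial_t^j\bigl(u^{(1)}_x\big|_{x=0}\bigr)\big|_{t=0}=\widetilde\Phi_{jx}(0,y)$ for the relevant $j$ (well defined, since $j<k/3$ means $3j<k$, so these traces live in $\widetilde H^{k-3j}$ with $k-3j\geq 1$). Combined with the assumptions $\partial_t^j\nu_0(0,y)=\widetilde\Phi_j(0,y)$, $j<k/3$, and $\partial_t^j\nu_1(0,y)=\widetilde\Phi_{jx}(0,y)$, $j<(k-1)/3$, this gives $\partial_t^j\widetilde\nu_0(0,y)=0$ for $j<k/3$ and $\partial_t^j\widetilde\nu_1(0,y)=0$ for $j<(k-1)/3$. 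Since these vanishing orders exceed the thresholds $(k+1)/3-1/2$ for $\widetilde\nu_0$ and $k/3-1/2$ for $\widetilde\nu_1$, the extensions of $\widetilde\nu_0,\widetilde\nu_1$ by zero to $t<0$ stay in $\widetilde H^{(k+1)/3,k+1}(B)$ and $\widetilde H^{k/3,k}(B)$ with controlled norms, the $y$-structure being unaffected by the extension in $t$.

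I then set $u^{(2)}\equiv J_0(\cdot,\cdot,\cdot;\widetilde\nu_0)+J_1(\cdot,\cdot,\cdot;\widetilde\nu_1)$. By Lemma~\ref{L2.8}, $u^{(2)}$ is a weak solution of the homogeneous equation with zero initial data and with traces $\widetilde\nu_0,\widetilde\nu_1$ at $x=0$, while its membership in $Y_k(\Pi_T^-)$, together with the bound $\|u^{(2)}\|_{Y_k(\Pi_T^-)}\leq c\bigl(\|\widetilde\nu_0\|_{\widetilde H^{(k+1)/3,k+1}(B_T)}+\|\widetilde\nu_1\|_{\widetilde H^{k/3,k}(B_T)}\bigr)$, follows from Lemmas~\ref{L2.4}--\ref{L2.7} via \eqref{2.23}, \eqref{2.25}, \eqref{2.26}, \eqref{2.27}, \eqref{2.32}. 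Finally I would take $u\equiv u^{(1)}+u^{(2)}$. By linearity it solves \eqref{2.1} with initial value $u_0$ (as $u^{(2)}$ has zero initial data), satisfies \eqref{2.9} because its traces at $x=0$ are $u^{(1)}\big|_{x=0}+\widetilde\nu_0=\nu_0$ and $u^{(1)}_x\big|_{x=0}+\widetilde\nu_1=\nu_1$, and lies in $Y_k(\Pi_T^-)$. Collecting the two norm bounds, and noting that $\|\widetilde\nu_0\|$ and $\|\widetilde\nu_1\|$ are dominated by $\|\nu_0\|+\|\nu_1\|$ plus the traces of $u^{(1)}$ (already controlled by the interior data through \eqref{2.4}), yields \eqref{2.34}. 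Uniqueness is immediate from Lemma~\ref{L2.2}, since $Y_k(\Pi_T^-)\subset L_2(\Pi_T^-)$.

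The main obstacle is the second step: showing that the compatibility conditions force the boundary defects $\widetilde\nu_0,\widetilde\nu_1$ to vanish to exactly the order needed for their zero-extensions to remain in the anisotropic spaces $\widetilde H^{s/3,s}(B)$ with controlled norms. This is where the fractional time-regularity of the trace spaces must be matched against the integer vanishing orders produced by \eqref{2.5}, and it is precisely this matching — governed by the $1/3$ anisotropic scaling — that the thresholds $j<k/3$ and $j<(k-1)/3$ are chosen to guarantee.
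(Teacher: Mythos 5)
Your proposal is correct and follows essentially the same route as the paper: extend $u_0$ and $f$ across $x=0$, solve the whole-strip problem via Lemma~\ref{L2.1}, correct the residual boundary traces $\widetilde\nu_0=\nu_0-U\big|_{x=0}$, $\widetilde\nu_1=\nu_1-U_x\big|_{x=0}$ (extended by zero to $t<0$ thanks to the compatibility conditions) with the boundary potentials $J_0$ and $J_1$, and conclude uniqueness from Lemma~\ref{L2.2}. Your explicit discussion of why the integer vanishing orders suffice for the zero-extension in the anisotropic spaces is a useful elaboration of a point the paper passes over in one clause.
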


\begin{proof}
Extend the functions $u_0$ and $f$ to the whole real axis with respect to $x$ in the
classes $\widetilde H^k(\Sigma)$ and $M_k(\Pi_T)$ respectively and
consider the solution $U(t,x,y)$ to the initial value problem
\eqref{2.1}, \eqref{1.2}, \eqref{1.4} in the class $Y_k(\Pi_T)$ given by
Lemma~\ref{L2.1}. Note that 
$$\widetilde \nu_0\equiv \nu_0-U|_{x=0}\in \widetilde H^{(k+1)/3,k+1}(B_T),\quad
\widetilde \nu_1\equiv \nu_1-U_x|_{x=0}\in \widetilde H^{k/3,k}(B_T),
$$
and by virtue of the compatibility conditions
$\partial_t^j\widetilde \nu_0\big|_{t=0}=0$ for $j<k/3$,  
$\partial_t^j\widetilde \nu_1\big|_{t=0}=0$ for $j<(k-1)/3$, 
so the functions $\widetilde\nu_0$, $\widetilde\nu_1$ can be extended in the same spaces to the whole strip $B$, such that $\widetilde\nu_0(t,y)=\widetilde\nu_1(t,y)=0$ for $t<0$.
Then Lemmas~\ref{L2.1}--\ref{L2.8} for the function
$$
u(t,x,y)\equiv U(t,x,y)+J_0(t,x,y;\widetilde\nu_0)+J_1(t,x,y;\widetilde\nu_1)
$$
provide the desired result.
\end{proof}

Now consider the problem in $Q_T$.

\begin{lemma}\label{L2.10}
Let $u_0\in \widetilde H^k$,
$\mu_0,\nu_0\in \widetilde H^{(k+1)/3,k+1}(B_T)$, $\nu_1\in \widetilde H^{k/3,k}(B_T)$, $f\in M_k(Q_T)$ for certain $T>0$, $k\geq 0$. Assume also that $\partial_t^j \mu_0(0,y) \equiv \widetilde \Phi_j(0,y)$, $\partial_t^j \nu_0(0,y) \equiv \widetilde \Phi_j(R,y)$ for $j<k/3$, $\partial_t^j \nu_1(R,y) \equiv \widetilde\Phi_{jx}(R,y)$ for $j<(k-1)/3$. Then there exists a unique
solution $u(t,x,y)\in Y_k(Q_T)$ to problem \eqref{2.1}, \eqref{1.2}--\eqref{1.4} and for any $t_0\in (0,T]$
\begin{multline}\label{2.35}
\|u\|_{Y_k(Q_{t_0})} 
\leq c(T,k,b,R,L)\Bigl(\|u_0\|_{\widetilde H^k}+
\|\mu_0\|_{\widetilde H^{(k+1)/3,k+1}(B_T)}+
\|\nu_0\|_{\widetilde H^{(k+1)/3,k+1}(B_T)} \\+
\|\nu_1\|_{\widetilde H^{k/3,k}(B_T)} +
t_0^{1/6}\|f\|_{M_k(Q_{t_0})}
 +\sum_{j=0}^{j_0-1} \|\partial_t^j
f\big|_{t=0}\|_{\widetilde H^{k-3(j+1)}}\Bigr).
\end{multline}
\end{lemma}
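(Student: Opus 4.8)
The plan is to follow the half-strip construction of Lemma~\ref{L2.9}, the only genuinely new feature being that on the bounded interval $(0,R)$ the data at $x=0$ and at $x=R$ interact. I would arrange the argument so that Lemma~\ref{L2.9} absorbs the initial data, the right-hand side and the two conditions at $x=R$, leaving a single homogeneous correction that carries one inhomogeneous condition at $x=0$.

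First I would extend $u_0$ and $f$ from $(0,R)$ to the half-line $\{x<R\}$ in the classes $\widetilde H^k$ and $M_k$, and apply the translate $x\mapsto x-R$ of Lemma~\ref{L2.9} (the half-strip $\{x<R\}$ with boundary at $x=R$) with boundary data $\nu_0,\nu_1$. The compatibility hypotheses $\partial_t^j\nu_0(0,y)\equiv\widetilde\Phi_j(R,y)$ and $\partial_t^j\nu_1|_{t=0}\equiv\widetilde\Phi_{jx}(R,y)$ are exactly those required by Lemma~\ref{L2.9} at the base point $x=R$. This produces $u_1\in Y_k(\{x<R\})$; restricting it to $Q_T$ gives a function solving \eqref{2.1}, the $y$-conditions \eqref{1.4}, the initial condition \eqref{1.2} and the two conditions at $x=R$ from \eqref{1.3}, and satisfying $\partial_t^ju_1|_{t=0}=\widetilde\Phi_j$. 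Setting $w=u-u_1$ reduces the problem to the homogeneous equation \eqref{2.1} (with $f\equiv0$), zero initial data, the $y$-conditions \eqref{1.4}, homogeneous conditions $w|_{x=R}=w_x|_{x=R}=0$, and the single condition $w|_{x=0}=\mu_0-u_1|_{x=0}=:h$. By the trace part of the definition of $Y_k$ we have $h\in\widetilde H^{(k+1)/3,k+1}(B_T)$, and since $\partial_t^ju_1|_{x=0,t=0}=\widetilde\Phi_j(0,y)$, the compatibility $\partial_t^j\mu_0(0,y)\equiv\widetilde\Phi_j(0,y)$ gives $\partial_t^jh|_{t=0}=0$ for $j<k/3$, so $h$ extends to the whole strip in the same space with support in $t>0$.

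Next I would solve this residual two-point problem by the Laplace--Fourier method of Lemma~\ref{L2.8}, but now keeping $\Re p=\varepsilon>0$ fixed (for instance $\varepsilon=1/T$). Transforming in $t$ and expanding in $\{\psi_l\}$ turns the equation into the ordinary differential equation $p\widetilde w+b\widetilde w_x+\widetilde w_{xxx}-\lambda_l\widetilde w_x=0$ on $(0,R)$ for each $l$, whose general solution is a combination of $e^{z_1x},e^{z_2x},e^{z_3x}$ with $z_j=z_j(p,b-\lambda_l)$ from \eqref{2.16}. The three conditions $\widetilde w(R)=\widetilde w_x(R)=0$ and $\widetilde w(0)=\widetilde h$ form a $3\times3$ linear system for the amplitudes whose matrix entries are polynomials in $z_j$ times $e^{z_jR}$; its inverse is the Fourier multiplier that produces $w$ after inverse Laplace transform.

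The hard part will be the uniform bound on this inverse. Invertibility for $\Re p=\varepsilon>0$ is seen from the energy identity: multiplying the transformed ODE by $\overline{\widetilde w}$, integrating over $(0,R)$ and taking real parts gives, under the homogeneous conditions, $\Re p\int_0^R|\widetilde w|^2\,dx+\tfrac12|\widetilde w_x(0)|^2=0$, so only $\widetilde w\equiv0$ survives; this both proves the matrix is nonsingular and explains why one condition at $x=0$ and two at $x=R$ is the dissipative choice. Combining this with the Cardano bounds \eqref{2.19}, \eqref{2.20} on $z_j$ gives entrywise control of the inverse by $C(T,R,b,L)$ times the powers of $(|\theta|^{2/3}+l^2)$ absorbed by the $\widetilde H^{(k+1)/3,k+1}$-norm of $h$. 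I would emphasize that, unlike Lemma~\ref{L2.8}, one must \emph{not} let $\varepsilon\to+0$, since at $\varepsilon=0$ the determinant vanishes precisely at the resonant lengths $R$ excluded in \eqref{1.13} (this is why non-resonance is needed only for the controllability result, not here). Assembling the bound \eqref{2.34} for $u_1$ from Lemma~\ref{L2.9} with the multiplier bounds for $w$ yields $u=u_1+w\in Y_k(Q_T)$ and the estimate \eqref{2.35}; uniqueness follows from the Holmgren argument of Lemma~\ref{L2.2}, applied to the backward problem in $Q_T$ with the adjoint boundary conditions $\phi|_{x=0}=\phi_x|_{x=0}=\phi|_{x=R}=0$ of Definition~\ref{D1.1}.
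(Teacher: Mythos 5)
Your first reduction is exactly the paper's: extend $u_0,f$ to $\{x<R\}$, absorb $u_0$, $f$, $\nu_0$, $\nu_1$ by the translated Lemma~\ref{L2.9}, and reduce to a residual problem with a single inhomogeneous condition $w\big|_{x=0}=h$, $h\in\widetilde H^{(k+1)/3,k+1}(B_T)$ vanishing to the right order at $t=0$. Where you diverge is the residual problem: you propose an exact Laplace--Fourier Green's function on the bounded interval $(0,R)$, whereas the paper writes $v=J(\cdot,\cdot,\cdot;\mu)+V$ with $J$ the right half-strip potential of \cite{F17}, $V$ a left half-strip correction killing the traces of $J$ at $x=R$, and inverts the resulting operator $E+\Gamma$ by a Neumann series on a short time interval (the smallness coming from the factor $\delta^{1/2}$ in \eqref{2.42}), then marches in time. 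Your observation that one must keep $\Re p=\varepsilon>0$ because the determinant degenerates at $\varepsilon=0$ exactly at the critical lengths of \eqref{1.13} is correct and is a genuine insight into why no non-resonance hypothesis appears in this lemma.

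The gap is in the sentence ``Combining this with the Cardano bounds \eqref{2.19}, \eqref{2.20} on $z_j$ gives entrywise control of the inverse,'' which is where all the work of the lemma actually lives. Two things are missing. First, the energy identity gives pointwise nonvanishing of the determinant $D(p,l)=\sum_j(z_{j+1}-z_{j+2})e^{-z_jR}$ on the non-compact parameter set $\{\Re p=\varepsilon\}\times\mathbb N$, but a \emph{uniform} lower bound does not follow from nonvanishing plus \eqref{2.19}--\eqref{2.20}: one root $z_2$ has $\Re z_2\to 0^+$ as $|\theta|+l\to\infty$, so the off-dominant terms of $D$ do not decay, and the identity itself only yields $\varepsilon\|\widetilde w\|^2_{L_2(0,R)}+\frac12|\widetilde w_x(0)|^2\leq \frac{|b-\lambda_l|}{2}|\widetilde h|^2+|\widetilde w_{xx}(0)||\widetilde h|$, which is not closed because $\widetilde w_{xx}(0)$ is uncontrolled. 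You need the asymptotic expansion $|D|\gtrsim|z_1-z_2|\,e^{-\Re z_3R}$ for large $|\theta|+l$ together with a compactness argument on the remaining bounded region. Second, and more seriously, ``entrywise'' (i.e.\ pointwise in $(\theta,l)$) bounds on the multiplier cannot produce the norms that define $Y_k$: the passage from $\widetilde H^{(k+1)/3,k+1}(B)$ boundary data to $C([0,T];\widetilde H^{k-3j})$ interior bounds rests on the sharp smoothing estimate \eqref{2.28} for oscillatory integrals $\int e^{i\theta t}e^{r_j(\theta,a)x}w(\theta)\,d\theta$, which gains the factor $(|\theta|^{1/3}+|a|^{1/2})^{-1}$ upon $L_2(dx)$-integration; without it you lose a third of a derivative and \eqref{2.35} fails. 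Done carefully, your kernel $\sum_j(z_{j+1}-z_{j+2})e^{z_j(x-R)}/D$ decomposes into precisely the half-line potentials of Lemmas~\ref{L2.4}--\ref{L2.6} modulated by bounded symbols, so the route can be completed — but that decomposition and the invocation of \eqref{2.28} are the substance of the proof and are absent from your proposal.
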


\begin{proof}
Solutions to the considered problem (similarly to the corresponding problem
in \cite{F08}) are constructed in the form
\begin{equation}\label{2.36}
u(t,x,y)=w(t,x,y)+v(t,x,y),
\end{equation}
where $w(t,x,y)$ is a solution to an initial-boundary value
problem in $\widetilde\Pi^-_{T}=(0,T)\times \widetilde \Sigma_{-}$, $\widetilde \Sigma_- = (-\infty,R)\times (0,L)$  for equation \eqref{2.1} with initial and boundary conditions \eqref{1.2} for $(x,y)\in \widetilde\Sigma_-$, \eqref{1.4} for $(t,x)\in (0,T)\times (-\infty,R)$ and \eqref{2.9} (where $x=0$ is substituted by $x=R$) in the class $Y_k(\widetilde \Pi^-_{T})$. Then according to \eqref{2.34} ($u_0$ and $f$ are extended to $x<0$ in a appropriate way)
\begin{multline}\label{2.37}
\|w\|_{Y_k(\widetilde \Pi_{t_0}^-)}
\leq c(T,k,b,L)\Bigl(\|u_0\|_{\widetilde H^k}+
\|\nu_0\|_{\widetilde H^{(k+1)/3,k+1}(B_T)}+
\|\nu_1\|_{\widetilde H^{k/3,k}(B_T)} \\+
t_0^{1/6}\|f\|_{M_k(\widetilde Q_{t_0})}
 +\sum_{j=0}^{j_0-1} \|\partial_t^j
f\big|_{t=0}\|_{\widetilde H^{k-3(j+1)}}\Bigr).
\end{multline}
Moreover, 
$$
\widetilde\mu_0(t,y)\equiv \mu_0(t,y)-w(t,0,y)\in
H^{(k+1)/3,k+1}(B_T),
$$
by virtue of the compatibility conditions on the line $(0,0,y)$
$\partial_t^j\widetilde \mu_0(0,y)\equiv 0$ for $j<k/3$ and
\begin{multline}\label{2.38}
\|\widetilde\mu_0\|_{\widetilde H^{(k+1)/3,k+1}(B_T)}
\leq c(T,k,b)\Bigl(\|u_0\|_{\widetilde H^k}+
\|\mu_0\|_{\widetilde H^{(k+1)/3,k+1}(B_T)}\\+
\|\nu_0\|_{\widetilde H^{(k+1)/3,k+1}(B_T)} +
\|\nu_1\|_{\widetilde H^{k/3,k}(B_T)} +
t_0^{1/6}\|f\|_{M_k(Q_{t_0})}
 +\sum_{j=0}^{j_0-1} \|\partial_t^j
f\big|_{t=0}\|_{\widetilde H^{k-3(j+1)}}\Bigr).
\end{multline}
In particular, the function $\widetilde\mu_0$ can be considered as extended in the same class to the whole strip $B$ such that $\widetilde\mu_0(t,y) = 0$ for $t<0$.
 
Consider in $Q_T$ a problem for the function $v$:
\begin{gather}\label{2.39}
v_t+bv_x+v_{xxx}+v_{xyy}=0, \\
\label{2.40} 
v\big|_{t=0}=0, \quad v\big|_{x=0}=\widetilde\mu_0, \quad
v\big|_{x=R}=v_x\big|_{x=R}=0
\end{gather}
also with corresponding boundary conditions \eqref{1.4}.
In order to construct a solution to this problem we consider for $x\geq 0$ the
boundary potential $J(t,x,y;\mu)$ for an arbitrary function
$\mu\in \widetilde H^{(k+1)/3,k+1}(B)$, $\mu(t,y)=0$ for $t<0$. 
Such a potential was introduced in \cite{F17} as a solution to an initial-boundary value problem in $\Pi_T^+ =(0,T)\times \Sigma_+$, $\Sigma_+=\mathbb R_+ \times (0,L)$, for equation \eqref{2.1} in the case $f\equiv 0$ with zero initial condition \eqref{1.2} for $(x,y)\in \Sigma_+$, boundary condition \eqref{1.4} for $(t,x)\in (0,T)\times\mathbb R_+$ and boundary condition
\begin{equation}\label{2.41}
u(t,0,y)=\mu(t,y),\quad (t,y)\in B_T.
\end{equation} 
According to \cite{F17} the function $J$ is infinitely differentiable for $x>0$ and 
for any $\delta\in (0,T]$
\begin{multline}\label{2.42}
\|J(\cdot,R,\cdot;\mu)\|_{\widetilde H^{(k+1)/3,k+1}(B_\delta)}+
\|\partial_x J(\cdot,R,\cdot;\mu)\|_{\widetilde H^{k/3,k}(B_\delta)} \\ \leq
c(T,k,b,R,L)\delta^{1/2}\|\mu\|_{L_2(B_\delta)}.
\end{multline}
Moreover, $\partial_t^j J(0,R,y;\mu)=\partial_t^j J_x(0,R,y;\mu)\equiv 0$ for all $j$.

Consider in the domain $\widetilde \Pi_{\delta}^-$ the problem of 
\eqref{2.1}, \eqref{1.2}, \eqref{1.4}, \eqref{2.9} (for $x=R$) type, where
$u_0\equiv 0$, $f\equiv 0$, $\nu_0\equiv -J(\cdot,R,\cdot;\mu)$,
$\nu_1\equiv -\partial_x J(\cdot,R,\cdot;\mu)$. A solution to this
problem $V\in Y_k(\widetilde \Pi_\delta^-)$ exists and, in particular,
\begin{equation}\label{2.43}
\begin{aligned}
&\|V(\cdot,0,\cdot)\|_{\widetilde H^{(k+1)/3,k+1}(B_\delta)}  \\
&\leq c(T,k,b,L) \left(\|J(\cdot,R,\cdot;\mu)\|_{\widetilde H^{(k+1)/3,k+1}(B_\delta)}+
\|\partial_x J(\cdot,R,\cdot;\mu)\|_{\widetilde H^{k/3,k}(B_\delta)}\right).
\end{aligned}
\end{equation}
Moreover, it is obvious that $\partial_t^j V(0,0,y) \equiv 0$ if $j<k/3$.

Consider a linear operator $\Gamma: \mu \mapsto V(\cdot,0,\cdot)$
in the space $\widetilde H^{(k+1)/3,k+1}(B_\delta)$, $\partial_t^j\mu(0,y)\equiv 0$ if $j<k/3$. For small $\delta=\delta(T,k,b,L)$ estimates \eqref{2.42} and
\eqref{2.43} provide that the operator $(E+\Gamma)$ is invertible
($E$ is the identity operator) and setting $\mu\equiv
(E+\Gamma)^{-1}\widetilde\mu_0$ we obtain the desired solution to problem
\eqref{2.39}, \eqref{2.40}, \eqref{1.4}
$$
v(t,x,y)\equiv J(t,x,y;\mu) + V(t,x,y),
$$
where (also with the use of the corresponding estimate on $J$ from \cite{F17})
\begin{equation}\label{2.44}
\|v\|_{Y_k(Q_\delta)} \leq c(T,k,b,L)\|\widetilde\mu_0\|_{\widetilde H^{(k+1)/3,k+1}(B_T)}.
\end{equation}

Thus, the solution $u(t,x,y)$ to problem \eqref{2.1},
\eqref{1.2}--\eqref{1.4} in the domain $Q_\delta$ is constructed
and according to (\ref{2.36})--(\ref{2.38}) and (\ref{2.44})  is
evaluated in the space $Y_k(Q_\delta)$ by the right part of
(\ref{2.35}). Moving step by step ($\delta$ is constant) we obtain
the desired solution in the whole domain $Q_T$.

Uniqueness of weak solutions to problem \eqref{2.1},
\eqref{1.2}--\eqref{1.4} in $L_2(Q_T)$ succeeds from existence of
smooth solutions to the adjoint problem
\begin{gather*}
\phi_t+b\phi_x+\phi_{xxx}+\phi_{xyy}=f\in C_0^\infty(Q_T), \\
\phi\big|_{t=T}=0, \quad
\phi\big|_{x=0}=\phi_x\big|_{x=0}=\phi\big|_{x=R}=0
\end{gather*}
and with the corresponding boundary conditions of \eqref{1.4} type,
which after simple change of variables transforms to the original one.
\end{proof}

\begin{remark}\label{R2.2}
In further lemmas of this section all intermediate argument is performed for smooth solutions constructed in Lemma~\ref{L2.10} with consequent pass to the limit on the basis of obtained estimates due to linearity of the problem. 
\end{remark}

\begin{lemma}\label{L2.11}
Let $u_0\in L_2$, $\mu_0=\nu_0\equiv 0$, $\nu_1\in L_2(B_T)$, $f\equiv f_0 +f_{1x}$, where $f_0\in L_1(0,T;L_2)$, $f_1\in L_2(Q_T)$. Then there exist a (unique) weak solution to problem \eqref{2.1}, \eqref{1.2}--\eqref{1.4} from the space $X(Q_T)$ and a function $\mu_1\in L_2(B_T)$, such that for any function $\phi\in L_2(0,T;\widetilde H^2)$, $\phi_t, \phi_{xxx}, \phi_{xyy}\in L_2(Q_T)$, $\phi\big|_{t=T}=0$, $\phi\big|_{x=0}=\phi\big|_{x=R} =0$, the following equality holds:
\begin{multline}\label{2.45}
\iiint_{Q_T}\Bigl[u(\phi_t+b\phi_x+\phi_{xxx}+\phi_{xyy}) +f_0\varphi -f_1\phi_x \Bigr]\,dxdydt \\
+\iint u_0\phi\big|_{t=0}\,dxdy +
\iint_{B_T} \Bigl[\nu_1\phi_x\big|_{x=R}-\mu_1\phi_x\big|_{x=0}\Bigr]\,dydt =0.
\end{multline}
Moreover, for $t\in (0,T]$
\begin{multline}\label{2.46}
\|u\|_{X(Q_t)} +\|\mu_1\|_{L_2(B_t)}\leq c(T,b,R)\Bigl(\|u_0\|_{L_2} + \|\nu_1\|_{B_T} 
+\|f_0\|_{L_1(0,t;L_2)} \\ +\|f_1\|_{L_2(Q_{t})}\Bigr),
\end{multline}
and if either $\rho(x)\equiv 1$ or $\rho(x)\equiv 1+x$
\begin{multline}\label{2.47}
\iint u^2(t,x,y)\rho(x)\,dxdy + \int_0^t \!\! \iint (3u_x^2 +u_y^2 -bu^2)\rho'(x)\,dxdyd\tau + \rho(0)\iint_{B_t} \mu_1^2\,dyd\tau \\ =
\iint u_0^2\rho(x)\,dxdy + \rho(R)\iint_{B_t} \nu_1^2\,dyd\tau +
2\int_0^t \!\! \iint f_0 u\rho(x)\,dxdyd\tau  \\
-2\int_0^t \!\! \iint f_1\bigl(u\rho(x)\bigr)_x\,dxdyd\tau.
\end{multline}
\end{lemma}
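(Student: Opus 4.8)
The plan is to solve the problem first for smooth data by means of Lemma~\ref{L2.10}, to derive the weighted energy identity \eqref{2.47} for the resulting regular solutions, to read off the a priori bound \eqref{2.46}, and then to pass to the limit using linearity and density. For smooth, compatible data $u_0$, $\nu_1$, $f_0$, $f_1$ respecting the boundary conditions \eqref{1.4}, the combined source $f=f_0+(f_1)_x$ is smooth and lies in $M_3(Q_T)$, so Lemma~\ref{L2.10} with $\mu_0=\nu_0\equiv 0$ and $k=3$ produces a regular solution $u\in Y_3(Q_T)$ whose trace $\mu_1\equiv u_x\big|_{x=0}\in\widetilde H^{1,3}(B_T)\subset L_2(B_T)$ is well defined. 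Testing \eqref{2.1} against an admissible $\phi$ and integrating by parts (the traces $\mu_1$ and $\nu_1=u_x\big|_{x=R}$ emerging from the third-order term in $x$, and $(f_1)_x$ producing $-f_1\phi_x$) shows that such $u$ already satisfies \eqref{2.45}.

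To obtain \eqref{2.47} I would multiply \eqref{2.1} by $2u\rho(x)$ and integrate over $\Omega\times(0,t)$, integrating by parts in $x$ and $y$. Since $\mu_0=\nu_0\equiv 0$ one has $u\big|_{x=0}=u\big|_{x=R}=0$, hence also $u_y\big|_{x=0}=u_y\big|_{x=R}=0$; this annihilates every $x$-boundary contribution except the one generated by $u_{xxx}$, which yields precisely $3u_x^2\rho'$ together with $-\rho(R)\nu_1^2+\rho(0)\mu_1^2$. The transverse boundary terms stemming from $u_{xyy}$ vanish at $y=0,L$ in each of the four cases of \eqref{1.4}, leaving $u_y^2\rho'$. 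The source term $2(f_1)_x u\rho$ is integrated by parts once more, its boundary pieces again vanishing because $u\big|_{x=0}=u\big|_{x=R}=0$, giving $-2f_1(u\rho)_x$. For $\rho\equiv 1$ and $\rho\equiv 1+x$ one has $\rho''=\rho'''=0$, so the remaining curvature terms drop and \eqref{2.47} follows verbatim.

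The estimate \eqref{2.46} is then extracted from \eqref{2.47} with $\rho(x)=1+x$, so that $\rho'\equiv 1$. The dissipative integral $\int_0^t\!\iint(3u_x^2+u_y^2)$ controls $\|u\|_{L_2(0,t;\widetilde H^1)}$; the lower-order term $-b\iint u^2$ and the term $2\iint f_0u\rho$ are handled by Young's and Gronwall's inequalities, while in $-2\iint f_1(u\rho)_x=-2\iint f_1\bigl((1+x)u_x+u\bigr)$ the $u_x$ part is reabsorbed into the dissipative integral by Young's inequality. Since $\iint u^2(1+x)\geq\iint u^2$, the same identity bounds $\sup_{[0,t]}\|u\|_{L_2}$, and the term $\rho(0)\iint_{B_t}\mu_1^2$ bounds $\|\mu_1\|_{L_2(B_t)}$, yielding \eqref{2.46} with a constant $c(T,b,R)$.

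Finally, for general data I would approximate by smooth compatible data; by linearity, \eqref{2.46} applied to differences of the corresponding regular solutions shows that the solutions form a Cauchy sequence in $X(Q_T)$ and their traces a Cauchy sequence in $L_2(B_T)$. Passing to the limit produces $u\in X(Q_T)$, $\mu_1\in L_2(B_T)$, the weak identity \eqref{2.45}, and---thanks to the strong convergence---the identity \eqref{2.47} and the bound \eqref{2.46} for the limit itself. Uniqueness follows from the duality (H\"olmgren) argument of Lemma~\ref{L2.10}: for the difference of two solutions one tests \eqref{2.45} against smooth solutions of the adjoint backward problem with $\phi_x\big|_{x=0}=0$, forcing the difference to vanish. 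The step I expect to be most delicate is the derivation of \eqref{2.47}: one must verify, in all four cases of \eqref{1.4}, that the transverse boundary terms cancel and that the only surviving traces at $x=0,R$ are exactly $\mu_1$ and $\nu_1$, with the correct signs and weights.
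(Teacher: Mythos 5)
Your proposal is correct and follows essentially the same route as the paper: regularize the data, invoke Lemma~\ref{L2.10} to get smooth solutions, multiply equation \eqref{2.1} by $2u\rho(x)$ and integrate to obtain the weighted identity \eqref{2.47} (with the trace $\mu_1=u_x\big|_{x=0}$ appearing from the $u_{xxx}$ term), absorb the $f_1(u\rho)_x$ contribution by Young's inequality to get the a priori bound \eqref{2.46}, and pass to the limit by linearity, with uniqueness via the H\"olmgren/adjoint-problem argument. Your boundary-term bookkeeping in all four cases of \eqref{1.4} checks out; the paper simply compresses these steps into equality \eqref{2.48}, inequality \eqref{2.49} and the phrase ``the end of the proof is standard.''
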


\begin{proof}
Multiplying \eqref{2.1} by $2u(t,x,y)\rho(x)$ and integrating over $\Omega$, we find that
\begin{multline}\label{2.48}
\frac d{dt} \iint u^2\rho\,dxdy +\rho(0)\int_0^L u_{x}^2\big|_{x=0}\,dy +
\iint (3u_{x}+u_{y}^2 -bu^2)\rho' \,dxdy \\ = \rho(R)\int_0^L \nu_1^2\,dy +
2\iint f_0 u\rho\,dxdy -2\iint f_1(u\rho)_x\,dxdy.
\end{multline}
Note that
\begin{multline}\label{2.49}
\Bigl|\iint f_1(u\rho)_x\,dxdy\Bigr| \leq c\|f_1\|_{L_2}
\bigl\|(|u_{x}|+|u|)\bigr\|_{L_2}\\ \leq \varepsilon
\iint \bigl(u_{x}^2+u^2\bigr)\,dxdy +
c(\varepsilon)\|f_1\|_{L_2}^2,
\end{multline}
where $\varepsilon>0$ can be chosen arbitrarily small.
Equality \eqref{2.48}  for $\rho\equiv 1+x$ and inequality \eqref{2.49} imply that that for smooth solutions
\begin{equation}\label{2.50}
\|u\|_{X(Q_T)} + \|u_{x}\big|_{x=0}\|_{L_2(B_T)} \leq c.
\end{equation}
The end of the proof is standard.
\end{proof}

\begin{remark}\label{R2.3}
The method of construction of weak solution in Lemma~\ref{L2.13} via closure ensures that $u\big|_{x=0}=u\big|_{x=R}=0$ in the trace sense (this fact can be also easily derived from equality \eqref{2.45}, since $u_x \in L_2(Q_T)$). Moreover, if $f\in L_2(Q_T)$ then according to Lemma~\ref{2.12} $u\in Y_0(Q_T)$ and, in particular, $\mu_1\equiv u_x\big|_{x=0}$.
\end{remark}

\begin{lemma}\label{L2.12}
Let $u_0\in \widetilde H^{(0,1)}$, $\mu_0=\nu_0=\nu_1\equiv 0$, $f\in L_2(Q_T)$. Then for the unique weak solution $u(t,x,y)\in X(Q_T)$ to problem \eqref{2.1}, \eqref{1.2}--\eqref{1.4} $u_y\in C([0,T];L_2)$, $|Du_y|\in L_2(Q_T)$ and for any $t\in (0,T]$
\begin{multline}\label{2.51}
\iint u_y^2(t,x,y)\,dxdy + \int_0^t\!\! \iint |Du_y|^2\,dxdyd\tau  \\ \leq 
(1+R)\iint u_{0y}^2\,dxdy + b\int_0^t\!\! \iint u_y^2 \, dxdyd\tau -
2\int_0^t\!\! \iint (1+x) f u_{yy}\,dxdyd\tau.
\end{multline}
\end{lemma}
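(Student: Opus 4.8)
The plan is to obtain \eqref{2.51} as an exact energy identity for smooth solutions and then pass to the limit. By Remark~\ref{R2.2} it suffices to argue for the smooth solutions produced by Lemma~\ref{L2.10}, approximating $u_0$ in $\widetilde H^{(0,1)}$ and $f$ in $L_2(Q_T)$ by smooth data; since all the quantities appearing in \eqref{2.51} are continuous with respect to these norms, the inequality is preserved in the limit. For such solutions I would multiply equation \eqref{2.1} by $-2(1+x)u_{yy}$ and integrate over $\Omega$. This multiplier is chosen because it reproduces precisely the weighted structure of \eqref{2.51} and, crucially, keeps $f$ undifferentiated: the forcing enters only through the term $-2\iint(1+x)f u_{yy}$, so no control of $f_y$ is required, which is essential as we only assume $f\in L_2(Q_T)$.

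The core of the work is the term-by-term integration by parts, where the boundary conditions must be handled with care. Integrating the $u_t$-term by parts once in $y$ gives $\frac{d}{dt}\iint(1+x)u_y^2$, the $y$-boundary contribution $[u_yu_t]_{y=0}^{y=L}$ vanishing in each regime of \eqref{1.4} (in a) and c) because $u$, hence $u_t$, vanishes where $u_y$ survives; in b) because $u_y=0$; in d) by periodicity). The $bu_x$-term yields $-b\iint u_y^2$, its $x$-boundary terms dropping out since $u|_{x=0}=u|_{x=R}=0$ forces $u_y|_{x=0}=u_y|_{x=R}=0$. The third-order term is decisive: after one integration by parts in $y$ it becomes $\iint 2(1+x)u_y (u_y)_{xxx}$, and integrating by parts in $x$ produces the dissipation $3\iint u_{xy}^2$ together with boundary terms; here $u_y|_{x=0}=u_y|_{x=R}=0$ and, by \eqref{1.3}, $u_x|_{x=R}=\nu_1\equiv0$ whence $u_{xy}|_{x=R}=0$, so every contribution at $x=R$ cancels and only the favorable term $\int_0^L u_{xy}^2\big|_{x=0}\,dy\ge0$ survives (there is no condition on $u_x$ at $x=0$). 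Finally the $u_{xyy}$-term contributes $\iint u_{yy}^2$, its $x$-boundary terms vanishing because $u|_{x=0}=u|_{x=R}=0$ gives $u_{yy}|_{x=0}=u_{yy}|_{x=R}=0$.

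Collecting these identities and integrating in $t$ gives
\begin{multline*}
\iint(1+x)u_y^2(t)\,dxdy + \int_0^t\!\!\int_0^L u_{xy}^2\big|_{x=0}\,dyd\tau + \int_0^t\!\!\iint(3u_{xy}^2+u_{yy}^2)\,dxdyd\tau \\ = \iint(1+x)u_{0y}^2\,dxdy + b\int_0^t\!\!\iint u_y^2\,dxdyd\tau - 2\int_0^t\!\!\iint(1+x)f u_{yy}\,dxdyd\tau.
\end{multline*}
To reach \eqref{2.51} I would discard the nonnegative boundary term, bound the weight from below by $1\le 1+x$ in the left-most term and from above by $1+x\le 1+R$ in the initial term, and dominate $\int_0^t\iint|Du_y|^2$ by $\int_0^t\iint(3u_{xy}^2+u_{yy}^2)$: the $u_{xy}^2$ and $u_{yy}^2$ contributions are immediate, while the remaining $\iint u_y^2$ is absorbed into the surplus $2\iint u_{xy}^2$ by Steklov's inequality \eqref{1.17} applied in $x$ (legitimate since $u_y|_{x=0}=u_y|_{x=R}=0$). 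The regularity assertions $u_y\in C([0,T];L_2)$ and $|Du_y|\in L_2(Q_T)$ then follow from the uniform a~priori bound via the usual weak-continuity argument.

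The main obstacle is the boundary-term bookkeeping: one must check that all $y$-boundary terms vanish simultaneously in each of the four cases \eqref{1.4}, which is exactly where membership of the smooth solution in the spaces $\widetilde H$ (encoding $\partial_y^{2m}u=0$, $\partial_y^{2m+1}u=0$, or periodicity) is used, and that the single surviving $x$-boundary term is the favorable $\int_0^L u_{xy}^2\big|_{x=0}\,dy$, which relies on the three conditions at $x=R$ in \eqref{1.3} together with the Dirichlet condition at $x=0$. A secondary point is transferring the formal computation to the weak solution, which I would do through the approximation scheme of Remark~\ref{R2.2} rather than by manipulating the weak formulation directly.
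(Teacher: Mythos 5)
Your proof is correct and follows the paper's argument exactly: the paper likewise multiplies \eqref{2.1} by $-2(1+x)u_{yy}$, integrates over $\Omega$, and arrives at precisely the identity you display (with the same single surviving boundary term $\int_0^L u_{xy}^2\big|_{x=0}\,dy$), declaring that \eqref{2.51} then ``obviously follows.'' The one caveat is your final Steklov step: $\iint u_y^2\,dxdy\le (R^2/\pi^2)\iint u_{xy}^2\,dxdy$ can be absorbed into the surplus $2\iint u_{xy}^2\,dxdy$ only when $R\le\pi\sqrt{2}$, so for large $R$ the zeroth-order summand $u_y^2$ of $|Du_y|^2$ is not controlled with constant $1$ --- but this imprecision is already implicit in the paper's own statement of \eqref{2.51} under its convention $|Du_y|^2=u_y^2+u_{xy}^2+u_{yy}^2$, and is harmless for the later applications, where any fixed positive multiple of the dissipation suffices.
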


\begin{proof}
Multiply \eqref{2.1} by $-2(1+x)u_{yy}(t,x,y)$ and integrate over $\Omega$, then
\begin{multline*}
\frac{d}{dt} \iint (1+x)u_y^2\,dxdy + \int_0^L u_{xy}^2\big|_{x=0}\,dy + 
\iint (3u_{xy}^2+u_{yy}^2- bu_y^2)\,dxdy \\ =
-2 \iint (1+x)fu_{yy}\,dxdy,
\end{multline*}
whence the assertion of the lemma obviously follows.
\end{proof}

\begin{lemma}\label{L2.13}
Let $u_0\in \widetilde H^2$, $u_0\big|_{x=0}=u_0\big|_{x=R}=u_{0x}\big|_{x=R}\equiv 0$ and $u_{0xxx},u_{0xyy} \in L_2$, $\mu_0=\nu_0=\nu_1\equiv 0$, $f\in C([0,T];L_2)$, $f_t\in L_2(0,T;H^{-1})$. Then for the (unique) weak solution to problem \eqref{2.1}, \eqref{1.2}--\eqref{1.4} from the space $X(Q_T)$  there exists $u_t\in X(Q_T)$, which is the weak solution to problem of \eqref{2.1}, \eqref{1.2}--\eqref{1.4} type, where $f$ is substituted by $f_t$, $u_0$ -- by $\bigl(f\big|_{t=0}-bu_{0x} -u_{0xxx}- u_{0xyy}\bigr)$, $\mu_0=\nu_0=\nu_1\equiv 0$.
\end{lemma}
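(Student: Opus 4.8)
The plan is to follow Remark~\ref{R2.2}: argue first for the smooth solutions supplied by Lemma~\ref{L2.10}, obtain a uniform a priori bound on the time derivative, and then pass to the limit using linearity. I would therefore fix smooth data $(u_0^{(n)},f^{(n)})$ satisfying the homogeneous boundary conditions \eqref{1.3}, \eqref{1.4} together with the compatibility conditions of Lemma~\ref{L2.10}, and such that $u_0^{(n)}\to u_0$ in $\widetilde H^2$ with $u_{0xxx}^{(n)}\to u_{0xxx}$, $u_{0xyy}^{(n)}\to u_{0xyy}$ in $L_2$, $f^{(n)}\to f$ in $C([0,T];L_2)$ and $f_t^{(n)}\to f_t$ in $L_2(0,T;H^{-1})$. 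Denote by $u^{(n)}$ the corresponding smooth solutions; by continuity of the linear data-to-solution map in $X(Q_T)$ one has $u^{(n)}\to u$ there.

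For a smooth solution the function $w\equiv u_t$ is classical. Differentiating \eqref{2.1} in $t$ shows that $w$ solves the same linear equation with $f$ replaced by $f_t$. Differentiating the homogeneous conditions $u\big|_{x=0}=u\big|_{x=R}=u_x\big|_{x=R}=0$ in $t$ gives $w\big|_{x=0}=w\big|_{x=R}=w_x\big|_{x=R}=0$, while the $y$-conditions \eqref{1.4} contain no $t$ and are inherited by $w$. Evaluating \eqref{2.1} at $t=0$ identifies the initial value $w\big|_{t=0}=f\big|_{t=0}-bu_{0x}-u_{0xxx}-u_{0xyy}=:v_0$; this lies in $L_2$ exactly by the hypotheses $u_0\in\widetilde H^2$, $u_{0xxx},u_{0xyy}\in L_2$ and $f\in C([0,T];L_2)$, and the approximate data give $v_0^{(n)}\to v_0$ in $L_2$.

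The core of the argument is a uniform estimate $\|w\|_{X(Q_T)}\le c\bigl(\|v_0\|_{L_2}+\|f_t\|_{L_2(0,T;H^{-1})}\bigr)$, which I would obtain by the energy method behind Lemma~\ref{L2.11}. Decompose $f_t=g_0+g_{1x}+g_{2y}$ with $g_0,g_1,g_2\in L_2(Q_T)$ and $\|g_0\|_{L_2(Q_T)}+\|g_1\|_{L_2(Q_T)}+\|g_2\|_{L_2(Q_T)}\le c\|f_t\|_{L_2(0,T;H^{-1})}$, multiply the $w$-equation by $2w(1+x)$ and integrate over $\Omega$, reproducing identity \eqref{2.48} with $\rho\equiv 1+x$. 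Since the boundary data of the $w$-problem vanish, the term at $x=R$ disappears, the contributions of $g_0$ and $g_{1x}$ are controlled exactly as in \eqref{2.49}, and the only new term is $-2\iint g_2w_y(1+x)\,dxdy$ arising from $g_{2y}$ (here $\rho=\rho(x)$, so $(w\rho)_y=w_y\rho$); bounding it by $\varepsilon\iint w_y^2\,dxdy+c(\varepsilon)\|g_2\|_{L_2}^2$ and absorbing the first summand into the dissipative term $\iint(3w_x^2+w_y^2)\,dxdy$ on the left completes the estimate.

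Applying this bound to $u^{(n)}-u^{(m)}$ shows that $\{\partial_t u^{(n)}\}$ is a Cauchy sequence in $X(Q_T)$; its limit is $u_t$ by continuity of distributional differentiation in $t$ (together with $u^{(n)}\to u$ in $X(Q_T)$), and passing to the limit in the corresponding weak identity of Lemma~\ref{L2.11} type shows that $u_t$ is the asserted weak solution. I expect the main obstacle to be precisely this a priori estimate with the rough forcing $f_t\in L_2(0,T;H^{-1})$: Lemma~\ref{L2.11} is stated only for forcing $f_0+f_{1x}$, so the genuinely new point is the control of the transverse component $g_{2y}$, which must be absorbed into the dissipative $w_y^2$ term produced by the weight $\rho'\equiv 1$. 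It is this dissipation, intrinsic to the $u_{xyy}$ structure of \eqref{2.1}, that makes the estimate close.
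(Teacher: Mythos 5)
Your proposal is correct and follows exactly the route the paper takes: the paper's entire proof of this lemma is the single sentence that the argument for $v\equiv u_t$ is similar to Lemma~\ref{L2.11}, i.e.\ differentiate the equation in $t$, read off the initial value of $u_t$ from \eqref{2.1} at $t=0$, and run the weighted energy identity \eqref{2.48} on smooth approximations before passing to the limit. Your additional observation --- that the genuinely new point relative to Lemma~\ref{L2.11} is the $H^{-1}$ forcing $f_t$, whose transverse component $g_{2y}$ must be absorbed into the dissipative $\iint w_y^2\,dxdy$ term produced by $\rho'\equiv 1$ --- is a detail the paper leaves implicit, and you supply it correctly.
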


\begin{proof}
The proof for the function $v\equiv u_t$ is similar to Lemma~\ref{L2.11}.
\end{proof}

\begin{lemma}\label{L2.14}
Let the hypothesis of Lemma~\ref{L2.13} be satisfied and, in addition, $f\in L_1(0,T;\widetilde H^{(0,2)})$. Then there exists a (unique) solution to problem \eqref{2.1}, \eqref{1.2}--\eqref{1.4} from the space $X^{2}(Q_T)$ and for any $t\in [0,T]$
\begin{multline}\label{2.52}
\|u\|^2_{X^2(Q_t)} \leq c(T,b,R)\Bigl(\|u_{0yy}\|^2_{L_2} +
\|f\|^2_{C([0,t];L_2)}+ \|u\|^2_{C([0,t];L_2)} + \|u_t\|^2_{C([0,t];L_2)} \\+
\sup\limits_{\tau\in (0,t]}\Bigl|\int_0^{\tau} \!\!\iint (1+x)f_{yy}u_{yy}\,dxdyds\Bigr| \Bigr).
\end{multline}
\end{lemma}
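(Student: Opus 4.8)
The plan is to work throughout with the smooth solutions furnished by Lemma~\ref{L2.10} and to derive the a priori bound \eqref{2.52} for them, passing to the limit at the end as prescribed by Remark~\ref{R2.2}. The governing idea will be a hierarchy of weighted energy estimates in the $y$-variable, supplemented by the equation itself and by Lemma~\ref{L1.2} to recover the $x$-heavy derivatives. At the bottom of the hierarchy I will use the bounds already available: Lemma~\ref{L2.11} controls $u$ and $Du$, Lemma~\ref{L2.12} controls $u_y$ together with $u_{xy},u_{yy}$, and Lemma~\ref{L2.13} provides $u_t\in X(Q_T)$, which accounts for the term $\|u_t\|^2_{C([0,t];L_2)}$ on the right of \eqref{2.52} and also yields $u_{tx},u_{ty}\in L_2(Q_T)$.

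The key new step will be the second-order $y$-energy estimate. Since the $x$-conditions $u|_{x=0}=u|_{x=R}=u_x|_{x=R}=0$ and the $y$-conditions \eqref{1.4} are all preserved under $\partial_y^2$ (this is exactly how the spaces $\widetilde H^{(0,k)}$ are built), the function $w\equiv u_{yy}$ satisfies \eqref{2.1} with right-hand side $f_{yy}$ and the same homogeneous boundary conditions as $u$. Multiplying the equation for $w$ by $2(1+x)w$ and integrating over $\Omega$ (equivalently, testing \eqref{2.1} with $2(1+x)u_{yyyy}$ and integrating by parts twice in $y$) I expect to obtain, in complete analogy with the identities in the proofs of Lemmas~\ref{L2.11} and \ref{L2.12}, the relation
\begin{multline*}
\frac{d}{dt}\iint(1+x)u_{yy}^2\,dxdy+\int_0^L u_{xyy}^2\big|_{x=0}\,dy \\
+\iint\bigl(3u_{xyy}^2+u_{yyy}^2-bu_{yy}^2\bigr)\,dxdy=2\iint(1+x)f_{yy}u_{yy}\,dxdy.
\end{multline*}
Integrating in $t$, absorbing the term $b\iint u_{yy}^2$ by Gronwall's inequality (which is why the constant will depend on $T$, $b$, $R$) and retaining the forcing contribution in the form $\sup_\tau\bigl|\int_0^\tau\!\iint(1+x)f_{yy}u_{yy}\bigr|$ should bound $\|u_{yy}\|^2_{C([0,t];L_2)}+\|u_{xyy}\|^2_{L_2(Q_t)}+\|u_{yyy}\|^2_{L_2(Q_t)}$ by precisely the $\|u_{0yy}\|^2_{L_2}$ and $f_{yy}u_{yy}$ terms on the right of \eqref{2.52}; the compatibility $u_{0yy}|_{x=0}=u_{0yy}|_{x=R}=u_{0xyy}|_{x=R}=0$ needed here follows by differentiating in $y$ the hypotheses on $u_0$.

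It then remains to recover the $x$-derivatives, and this will be the main obstacle, because the dispersive energy method only delivers the $y$-heavy derivatives. Solving \eqref{2.1} for the top $x$-derivative gives $u_{xxx}=f-u_t-bu_x-u_{xyy}$, and since every term on the right lies in $L_2(Q_t)$ (by the previous step and Lemma~\ref{L2.13}), so does $u_{xxx}$; writing $u_x=\partial_x u$ and $u_{xyy}=\partial_x u_{yy}$, the same relation shows $\|u_{xxx}\|_{H^{(-1,0)}}\le c(\|f\|_{L_2}+\|u_t\|_{L_2}+\|u\|_{L_2}+\|u_{yy}\|_{L_2})$, so Lemma~\ref{L1.2} applied to $u$ yields $u_{xx},u_x\in C([0,t];L_2)$ controlled by the continuous norms on the right of \eqref{2.52}. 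For the mixed third derivative I would differentiate this relation in $y$, $u_{xxxy}=f_y-u_{ty}-bu_{xy}-u_{xyyy}$, use that $f_y\in L_2(Q_t)$ by the interpolation $\|f_y\|^2_{L_2}\le\|f\|_{L_2}\|f_{yy}\|_{L_2}$ together with $f\in C([0,T];L_2)\cap L_1(0,T;\widetilde H^{(0,2)})$, and apply Lemma~\ref{L1.2} to $u_y$ (which vanishes at $x=0$ and $x=R$) to get $u_{xxy},u_{xy}\in L_2(Q_t)$; finally the identity $\iint u_{xy}^2=\iint u_{xx}u_{yy}$ (all boundary terms vanishing by the $x$- and $y$-conditions) upgrades $u_{xy}$ to $C([0,t];L_2)$. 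The hard part is precisely that a naive interpolation $\iint u_{xxy}^2=\iint u_{xxx}u_{xyy}+\int_0^L u_{xxy}u_{xy}\big|_{x=0}$ leaves a boundary term that cannot be absorbed by the good boundary terms of the energy identities, which only ever involve $\partial_y^{2k}u_x|_{x=0}$; this is what forces the detour through the $y$-differentiated equation and Lemma~\ref{L1.2}. Collecting the three energy levels with these recovered derivatives gives the full $\widetilde H^2$-bound in $C([0,t];\cdot)$ and the $\widetilde H^3$-bound in $L_2(0,t;\cdot)$, i.e.\ \eqref{2.52}. Uniqueness in $X^2(Q_T)$ follows as in Lemma~\ref{L2.10}, and continuity in time of the top-order derivatives is obtained from the smooth approximations via the energy identities, completing the passage to the limit.
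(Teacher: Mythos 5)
Your proposal coincides with the paper's argument up to and including the second-order $y$-energy identity (the paper's \eqref{2.53}) and the recovery of $u_{xx}$ from the equation via Lemma~\ref{L1.2}, but it goes astray at the recovery of $u_{xxy}$. Your diagnosis of the ``naive interpolation'' is incorrect: integrating by parts in $y$ first and then in $x$ gives
\[
\iint u_{xxy}^2\,dxdy=\iint u_{xxx}u_{xyy}\,dxdy+\int_0^L \bigl(u_{xyy}u_{xx}\bigr)\big|_{x=0}\,dy,
\]
and the trace appearing here is $u_{xyy}\big|_{x=0}=\partial_y^2u_x\big|_{x=0}$ --- precisely the ``good'' boundary term $\int_0^Lu_{xyy}^2\big|_{x=0}\,dy$ produced on the left of \eqref{2.53} --- paired with $u_{xx}\big|_{x=0}$, whose trace is controlled by \eqref{1.19} together with the interior bounds on $u_{xx}$ and $u_{xxx}$ (the latter from \eqref{2.59}). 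This is exactly what the paper does in \eqref{2.58}--\eqref{2.59}; no detour through the $y$-differentiated equation is needed. You arrived at the unusable boundary term $\int_0^Lu_{xxy}u_{xy}\big|_{x=0}\,dy$ only because you integrated by parts in the other order.

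More importantly, the detour you take does not prove the lemma as stated. Applying Lemma~\ref{L1.2} to $u_y$ with $u_{xxxy}=f_y-u_{ty}-\partial_x(bu_y+u_{yyy})$ bounds $\|u_{xxy}\|_{L_2}$ by, among other things, $\|f_y\|_{L_2}$ and $\|u_{ty}\|_{L_2}$. Neither $\|f_y\|_{L_2(Q_t)}$ (which your interpolation controls only through $\|f_{yy}\|_{L_1(0,t;L_2)}$) nor $\|u_{ty}\|_{L_2(Q_t)}$ is dominated by the right-hand side of \eqref{2.52}: the latter contains only $\|u_t\|_{C([0,t];L_2)}$, which gives no control of spatial derivatives of $u_t$, and the forcing enters only through $\|f\|_{C([0,t];L_2)}$ and $\sup_\tau\bigl|\int_0^\tau\iint(1+x)f_{yy}u_{yy}\bigr|$. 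The precise form of \eqref{2.52} is not cosmetic: it is invoked verbatim in the proof of Lemma~\ref{L3.3} (inequality \eqref{3.47}) with $f$ replaced by $f-uu_x-(\psi u)_x$, where the $f_{yy}u_{yy}$ pairing is what permits the subsequent integrations by parts on the nonlinearity. Thus your argument yields existence of a solution in $X^{2}(Q_T)$ under the stated hypotheses, but the quantitative estimate \eqref{2.52} --- an essential part of the assertion --- is not established.
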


\begin{proof}
For smooth solutions differentiating equality \eqref{2.1} twice with respect to $y$, multiplying the obtained equality by $2u_{yy}(t,x,y)\rho(x)$, $\rho(x)\equiv (1+x)$, and integrating over $\Omega$ we derive, that
\begin{multline}\label{2.53}
\frac d{dt} \iint u_{yy}^2\rho\,dxdy +\int_0^L u_{xyy}^2\big|_{x=0}\,dy +
\iint (3u_{xyy}^2+u_{yyy}^2-bu^2_{yy})\,dxdy \\ =  
2\iint f_{yy}u_{yy}\rho\,dxdy,
\end{multline}
whence obviously follows that
\begin{equation}\label{2.54}
\|u_{yy}\|_{X(Q_T)} \leq c.
\end{equation}
Hence, for the weak solution also $u_{yy}\in X(Q_T)$. Lemmas~\ref{L2.11} and~\ref{L2.13} provide, that $u,u_t \in X(Q_T)$. Write equality \eqref{2.1} in the form
\begin{equation}\label{2.55}
u_{xxx} = f-u_t -bu_x -u_{xyy}.
\end{equation}
Then, inequality \eqref{1.22} for $j=2$ and \eqref{2.55} yield that 
\begin{multline}\label{2.56}
\|u_{xx}\|_{L_2} \leq c(R)\bigl(\|u_{xxx}\|_{H^{(-1,0)}} + 
\|u\|_{L_2}\bigr) \\ \leq
c(b,R)\bigl(\|f\|_{L_2} + 
\|u_t\|_{L_2} + \|u_{yy}\|_{L_2} + \|u\|_{L_2}\bigr).
\end{multline}
Since
$$
\iint u_{xy}^2\,dxdy = \iint u_{xx}u_{yy}\,dxdy,
$$
estimates \eqref{2.54} and \eqref{2.56} yield that $u\in C([0,T];\widetilde H^{2})$ and
\begin{equation}\label{2.57}
\|u(t,\cdot,\cdot)\|_{\widetilde H^{2}} \leq c\bigl( \|f\|_{L_2} + 
\|u_t\|_{L_2} + \|u_{yy}\|_{L_2} + \|u\|_{L_2}\bigr).
\end{equation}
Next, 
$$
\iint u_{xxy}^2\,dxdy = \iint u_{xxx}u_{xyy}\,dxdy 
+\int_0^L (u_{xyy}u_{xx})\big|_{x=0}\,dy
$$
and inequality \eqref{1.19} provides that
\begin{equation}\label{2.58}
\iint u_{xxy}^2\,dxdy \leq \iint (u_{xxx}^2+u_{xyy}^2)\,dxdy +
 \int_0^L u^2_{xyy}\big|_{x=0}\,dy +
c\iint u_{xx}^2\,dxdy.
\end{equation}
From equality \eqref{2.55} we derive, that
\begin{equation}\label{2.59}
\iint u_{xxx}^2\,dxdy \leq c\iint (f^2+u_t^2+b^2u_x^2+u_{xyy}^2)\,dxdy,
\end{equation}
and combining \eqref{2.53}, \eqref{2.57}--\eqref{2.59} finish the proof.
\end{proof}

\begin{lemma}\label{L2.15}
Let the hypothesis of Lemma~\ref{L2.13} be satisfied and, in addition, $u_0\in \widetilde H^3$, $f\in L_2(0,T;\widetilde H^{(0,2)})$. Then there exists a (unique) solution to problem \eqref{2.1}, \eqref{1.2}--\eqref{1.4} from the space $X^{3}(Q_T)$ and for any $t\in (0,T]$
\begin{multline}\label{2.60}
\|u\|_{X^{3}(Q_t)} \leq c(T,b,R,L)\bigl(\|u_0\|_{\widetilde H^{3}} +
\|f\|_{C([0,t];L_2)} + \|f\|_{L_2(0,t;\widetilde H^{(0,2)})} \\ 
+\|f_{t}\|_{L_2(0,t;H^{-1})}\bigr).
\end{multline} 
\end{lemma}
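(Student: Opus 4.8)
The plan is to obtain the $X^3(Q_T)$ bound by combining the second-order estimate of Lemma~\ref{L2.14} with a new energy identity for the third $y$-derivative and then recovering the pure $x$-regularity from the equation via Lemma~\ref{L1.2}. First I would verify that the hypotheses of Lemma~\ref{L2.14} are met (they are, since $u_0\in\widetilde H^3\subset\widetilde H^2$ with the stated trace conditions, and $f\in L_2(0,T;\widetilde H^{(0,2)})\subset L_1(0,T;\widetilde H^{(0,2)})$), so that $u\in X^2(Q_T)$ is already under control and inequality \eqref{2.52} is available. This settles all derivatives $\partial^\alpha u$ with $|\alpha|\leq 2$ together with $u_t\in X(Q_T)$, and it reduces the task to estimating the third-order derivatives $u_{yyy}$, $u_{xyy}$, $u_{xxy}$, $u_{xxx}$ in $C([0,T];L_2)$ and their first derivatives in $L_2(Q_T)$.

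Next I would differentiate \eqref{2.1} three times with respect to $y$, multiply by $2(1+x)u_{yyy}$ and integrate over $\Omega$, exactly paralleling \eqref{2.53}. Because the boundary conditions \eqref{1.4} in each of the four cases are preserved under $\partial_y$ in the relevant parity (this is precisely how the spaces $\widetilde H^s$ were designed), the boundary terms on $y=0,L$ vanish and one is left with an identity of the form
\begin{equation*}
\frac{d}{dt}\iint u_{yyy}^2\rho\,dxdy +\int_0^L u_{xyyy}^2\big|_{x=0}\,dy +\iint(3u_{xyyy}^2+u_{yyyy}^2-bu_{yyy}^2)\,dxdy = 2\iint f_{yyy}u_{yyy}\rho\,dxdy,
\end{equation*}
with $\rho(x)\equiv 1+x$. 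The right-hand side is the delicate point: $f$ is only assumed in $L_2(0,T;\widetilde H^{(0,2)})$, so $f_{yyy}$ need not be a function. I would therefore integrate by parts in $y$ to rewrite the forcing as $-2\iint f_{yy}u_{yyyy}\rho\,dxdy$ and absorb it using the good term $\iint u_{yyyy}^2\,dxdy$ via Young's inequality, controlling the remainder by $\|f\|_{L_2(0,t;\widetilde H^{(0,2)})}^2$ and the already-bounded lower-order norms. Integrating in $t$ and invoking the $X^2$ bound then gives $u_{yyy}\in X(Q_T)$ with the desired estimate, and the mixed term $u_{xyy}$ follows since $\iint u_{xyy}^2=\iint u_{yyy}u_{xyy}\cdot(\text{by parts})$ or directly from the energy term $3u_{xyyy}^2$ controlling $\partial_x u_{yyy}$.

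Finally I would recover the remaining $x$-derivatives algebraically. Writing the equation as in \eqref{2.55} gives $u_{xxx}=f-u_t-bu_x-u_{xyy}$, and differentiating once in $y$ gives $u_{xxxy}$ in terms of $f_y,u_{ty},u_{xy},u_{xyyy}$, all now controlled; an estimate of the type \eqref{2.58}--\eqref{2.59} then bounds $u_{xxy}$ and $u_{xxxy}$. For the top pure derivative, I would apply Lemma~\ref{L1.2} with $j=2$ to $u_x$ (so that $\|u_{xxx}\|_{L_2}$ is bounded by $\|u_{xxxx}\|_{H^{(-1,0)}}$ plus lower order, where $u_{xxxx}=\partial_x(f-u_t-bu_x-u_{xyy})$ lies in $H^{(-1,0)}$ because each summand differentiated once in $x$ is a single $x$-derivative of an $L_2$ function), yielding $u_{xxx}\in C([0,T];L_2)$ and, together with a time-integrated version, $u_{xxxx}\in L_2(Q_T)$ in the appropriate negative-order sense consistent with \eqref{1.5}. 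Collecting the $C([0,T];\widetilde H^3)$ and $L_2(0,T;\widetilde H^4)$ contributions for $u$ and the $C([0,T];L_2)\cap L_2(0,T;\widetilde H^1)$ contributions for $u_t$ produces \eqref{2.60}. The main obstacle I anticipate is the forcing term $2\iint f_{yyy}u_{yyy}\rho$: since $f$ is not three times weakly $y$-differentiable, the integration-by-parts maneuver and the precise matching of regularity against the dissipative term $\iint u_{yyyy}^2$ is where the argument must be handled carefully rather than routinely, and it is exactly this point that forces the hypothesis $f\in L_2(0,T;\widetilde H^{(0,2)})$ rather than a weaker one.
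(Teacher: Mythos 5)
Your first two steps coincide with the paper's: the reduction to Lemma~\ref{L2.14}, the third-order energy identity (which the paper obtains directly in the form \eqref{2.62} by differentiating the equation twice in $y$ and testing against $-2u_{yyyy}\rho$, so the $f_{yyy}$ difficulty you flag never actually arises --- the forcing enters as $-2\iint f_{yy}u_{yyyy}\rho\,dxdy$ from the start and is absorbed by Young's inequality exactly as you describe), and the recovery of $u_{xxxy}$, $u_{xxyy}$, $u_{xxxx}$ in $L_2(Q_T)$ from \eqref{2.55}. Up to and including the bound $\|u\|_{L_2(0,T;\widetilde H^4)}\leq c$ your argument is sound and is essentially the paper's.

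The gap is in the final step, the bound $u\in C([0,T];\widetilde H^3)$, i.e.\ uniform-in-time $L_2$ control of $u_{xyy}$, $u_{xxy}$, $u_{xxx}$. Your application of Lemma~\ref{L1.2} with $j=2$ to $u_x$ requires $u_{xxxx}=\partial_x\bigl(f-u_t-bu_x-u_{xyy}\bigr)\in H^{(-1,0)}$ at each fixed $t$, hence $u_{xyy}(t,\cdot,\cdot)\in L_2$ uniformly in $t$ --- which is one of the very quantities being proved, so the argument is circular. Neither of your suggested routes to $u_{xyy}$ closes this: the term $3u_{xyyy}^2$ in \eqref{2.62} sits under a time integral and yields only $u_{xyyy}\in L_2(Q_T)$, and identities of type \eqref{2.58} produce boundary traces such as $\int_0^L u^2_{xyy}\big|_{x=0}\,dy$ that are controlled only after integration in $t$ (they come from the dissipative terms of the energy identities), so they too give only $L_2$-in-time information. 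This is precisely where the paper switches to a different mechanism: the equation says $\Delta_{x,y}(u_x)=f-u_t-bu_x$ (up to cut-off corrections) with right-hand side in $C([0,T];L_2)$, and an elliptic $H^2$ estimate for $u_x$ on the reflected domain reduces everything to a $C([0,T];H^{3/2})$ bound on the trace $u_x\big|_{x=0}$, which is then obtained by interpolating $u_{tx}\big|_{x=0}\in L_2(B_T)$ (from \eqref{2.61}) against $u_x\big|_{x=0}\in L_2(0,T;H^3(0,L))$ (from \eqref{2.63}), giving \eqref{2.65}. Without this elliptic step, or some substitute for it, your proof does not reach $C([0,T];\widetilde H^3)$; the obstacle you single out as the delicate point is in fact the routine one.
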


\begin{proof}
First of all note that hypotheses of Lemmas~\ref{L2.11} (for $f_1\equiv 0$), \ref{L2.13} and~\ref{L2.14} are satisfied. Therefore, taking into account also Remark~\ref{R2.3} we derive for smooth solutions that
\begin{equation}\label{2.61}
\|u\|_{X^2(Q_T)} + \|u_x\big|_{x=0}\|_{L_2(B_T)} + 
\|u_t\|_{X(Q_T)} + \|u_{tx}\big|_{x=0}\|_{L_2(B_T)}  \leq c.
\end{equation}
Next, differentiating equality \eqref{2.1} twice with respect to $y$, multiplying the obtained equality by $-2u_{yyyy}(t,x,y)\rho(x)$, $\rho(x)\equiv (1+x)$ and integrating over $\Omega$ we derive similarly to  \eqref{2.53} that
\begin{multline}\label{2.62}
\frac d{dt} \iint u_{yyy}^2\rho\,dxdy +\int_0^L u_{xyyy}^2\big|_{x=0}\,dy +
\iint (3u_{xyyy}+u_{yyyy}^2-bu^2_{yyy})\,dxdy \\ = - 2\iint f_{yy}u_{yyyy}\rho\,dxdy.
\end{multline}
Here
$$
\Bigl|2\iint f_{yy}u_{yyyy}\rho\,dxdy \Bigr| \leq \varepsilon \iint u_{yyyy}^2\,dxdy +
\frac {(1+R)^2}\varepsilon\iint f_{yy}^2 \,dxdy,
$$
where $\varepsilon>0$ can be chosen arbitrarily small, and equality \eqref{2.62} yields that
\begin{equation}\label{2.63}
\|u_{yyy}\|_{X(Q_T)} + \|u_{xyyy}\big|_{x=0}\|_{L_2(B_T)}  \leq c.
\end{equation}
Again apply equality \eqref{2.55}. Then it follows from \eqref{2.63} that we have the suitable estimate on $u_{xxxy}$ in the space $L_2(Q_T)$. Similarly to \eqref{2.58}
$$
\iint u_{xxyy}^2\,dxdy \leq \iint (u_{xxxy}^2+u_{xyyy}^2)\,dxdy +
 \int_0^L u^2_{xyyy}\big|_{x=0}\,dy +
c\iint u_{xxy}^2\,dxdy,
$$
whence follows the suitable estimate on $u_{xxyy}$ in $L_2(Q_T)$ and, as a result, on $u_y$ in $L_2(0,T;\widetilde H^3)$. One more application of \eqref{2.55} yields the estimate on $u_{xxxx}$ in $L_2(Q_T)$. Therefore,
\begin{equation}\label{2.64}
\|u\|_{L_2(0,T;\widetilde H^4)} \leq c.
\end{equation}

Consider the extensions of the functions $u$ and $f$ for $y\in (L,2L]$ and $y\in [-L,0)$ in the case a) by the  even reflections through $y=L$ and $y=0$, in the case b) -- by the odd ones, in the case c) -- by the corresponding combination of these methods, in the case d) -- by the periodic extension. Then the functions $u$ and $f$ remain smooth in the more wide domain $[0,T]\times [0,R] \times [-L,2L]$, and equality \eqref{2.1} also remains valid. Let $\eta_L(y) \equiv \eta(1+y/L)\eta(2-y/L)$, $\widetilde u(t,x,y) \equiv u(t,x,y)\eta_L(y)$, $\widetilde f(t,x,y) \equiv f(t,x,y)\eta_L(y)$. 
Now we apply the inequality (see, e.g. \cite{LM}) for the domain $\widetilde\Omega=(0,R)\times \mathbb R^y$
$$
\|g\|_{H^2(\widetilde\Omega)}\leq c\bigl(\|\Delta g\|_{L_2(\widetilde\Omega)}+
\|g\big|_{\partial\widetilde\Omega}\|_{H^{3/2}(\mathbb R)}+
\|g\|_{H^1(\widetilde\Omega)}\bigr)
$$
for the function $g\equiv \widetilde u_x$. Note that $g\big|_{x=R}=0$ and 
$$
\Delta_{x,y} \widetilde u_x = \widetilde f -\widetilde u_t -b\widetilde u_x +2u_{xy}\eta_L'+u_x\eta_L''.
$$
It follows from (\ref{2.61}) that
$$
\|\Delta_{x,y} \widetilde u_x\|_{C([0,T];L_2(\widetilde\Omega))} \leq c.
$$
Moreover, by virtue of (\ref{2.61}), (\ref{2.63}) and embedding
$H^2(\widetilde\Omega)\subset H^{3/2}(\{{x=0}\}\times \mathbb R^y)$ (see \cite{LM})
\begin{multline*}
\|u_x\big|_{x=0}\|_{C([0,T];H^{3/2}(\mathbb R))} \leq 
\|u_{0x}\big|_{x=0}\|_{H^{3/2}(\mathbb R)}\\
+2\|u_{tx}\big|_{x=0}\|_{L_2((0,T)\times \mathbb R)}^{1/2}
\|u_x\big|_{x=0}\|_{L_2(0,T;H^3(\mathbb R))}^{1/2} \leq c.
\end{multline*}
Therefore,
\begin{equation}\label{2.65}
\|u_x\|_{C([0,T];H^2)} \leq c.
\end{equation}
Estimates \eqref{2.61}, \eqref{2.63}--\eqref{2.65} provide the desired result.
\end{proof}

At the end of this section consider the particular case of problem \eqref{2.1}, \eqref{1.2}--\eqref{1.4} in $Q_T$ for $\mu_0=\nu_0=\nu_1\equiv 0$, $f\equiv 0$. Denote its solution by $Pu_0$, then it succeeds from Lemma~\ref{L2.10} that the operator $P$ is linear and bounded from $L_2$ to $Y_0(Q_T)$. Moreover, it easily follows from \eqref{2.47} that
\begin{equation}\label{2.66}
\bigl\|\partial_x(Pu_0)\big|_{x=0}\bigr\|_{L_2(B_T)} \leq \|u_0\|_{L_2}.
\end{equation}
For the controllability purposes we need the following observability result.

\begin{lemma}\label{L2.16} 
If condition \eqref{1.13} holds, then there exists a constant $c=c(T,b,R,L)>0$, such that
\begin{equation}\label{2.67}
\|u_0\|_{L_2} \leq c \bigl\|\partial_x(Pu_0)\big|_{x=0}\bigr\|_{L_2(B_T)}.
\end{equation}
\end{lemma}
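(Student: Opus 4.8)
The plan is to establish \eqref{2.67} by a compactness--uniqueness argument following \cite{R}, reducing the possible failure of observability to an overdetermined eigenvalue problem that is ruled out exactly by \eqref{1.13}. Throughout write $u=Pu_0$ and let $\{S(t)\}_{t\ge0}$ be the $C_0$-semigroup on $L_2$ defined by $S(t)u_0=(Pu_0)(t,\cdot,\cdot)$ (well defined, linear and bounded by Lemma~\ref{L2.10}), with generator $A$ acting as $Aw=-bw_x-w_{xxx}-w_{xyy}$ on the domain encoding the conditions $w\big|_{x=0}=w\big|_{x=R}=w_x\big|_{x=R}=0$ and \eqref{1.4}. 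First I would record the \emph{weak observability} estimate
\[
\|u_0\|_{L_2}^2 \le \frac1T\,\|u\|_{L_2(Q_T)}^2 + \bigl\|\partial_x u\big|_{x=0}\bigr\|_{L_2(B_T)}^2 .
\]
It follows from the energy identity \eqref{2.47} with $\rho\equiv1$ and $f\equiv0$, which here reads $\frac{d}{dt}\|u\|_{L_2}^2=-\int_0^L u_x^2\big|_{x=0}\,dy\le0$. Hence $\|u(t)\|_{L_2}^2=\|u_0\|_{L_2}^2-\|\partial_x u|_{x=0}\|_{L_2(B_t)}^2$; integrating over $t\in(0,T)$ and applying Fubini to $\int_0^T\|\partial_x u|_{x=0}\|_{L_2(B_t)}^2\,dt=\int_0^T(T-s)\int_0^L u_x^2\big|_{x=0}\,dy\,ds$ yields the displayed inequality.

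Next I would run the contradiction. If \eqref{2.67} fails there is a sequence $u_0^n$ with $\|u_0^n\|_{L_2}=1$ and $\|\partial_x(Pu_0^n)|_{x=0}\|_{L_2(B_T)}\to0$. By Lemma~\ref{L2.10} the solutions $u^n=Pu_0^n$ are bounded in $L_2(0,T;\widetilde H^1)$, and from the equation $\partial_t u^n$ is bounded in $L_2(0,T;H^{-2})$; since $\widetilde H^1\hookrightarrow L_2$ compactly (Rellich), the Aubin--Lions--Simon lemma gives a subsequence with $u^n\to u$ strongly in $L_2(Q_T)$, while $u_0^n\rightharpoonup u_0$ in $L_2$. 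Applying the weak observability inequality to $u_0^n-u_0^m=P^{-1}(u^n-u^m)$, the boundary term tends to $0$ and the interior term is Cauchy by the strong convergence, so $u_0^n\to u_0$ strongly in $L_2$; thus $\|u_0\|_{L_2}=1$ and, by the continuity of the trace map \eqref{2.66}, $\partial_x(Pu_0)\big|_{x=0}=0$. Consequently the subspace $N_T=\{w\in L_2:\partial_x(Pw)|_{x=0}=0\text{ in }L_2(B_T)\}$ is nontrivial, and the very same compactness shows that its unit ball is precompact, so $N_T$ is finite-dimensional.

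I would then reduce to a spectral problem. For $w\in N_T$, differentiating the identity $\partial_x(S(t)w)\big|_{x=0}=0$ in $t$ shows $Aw\in N_T$ as well, so $N_T$ is invariant under $A$; being finite-dimensional, the complexification $A|_{N_T\otimes\mathbb C}$ possesses an eigenvector $0\ne\varphi$, $A\varphi=\lambda\varphi$. Expanding $\varphi(x,y)=\sum_l\varphi_l(x)\psi_l(y)$ and using $\psi_l''=-\lambda_l\psi_l$ decouples the problem, so each non-vanishing component solves the third-order ODE
\[
\varphi_l'''+(b-\lambda_l)\varphi_l'+\lambda\varphi_l=0,\qquad
\varphi_l(0)=\varphi_l'(0)=\varphi_l(R)=\varphi_l'(R)=0,
\]
the extra Neumann condition $\varphi_l'(0)=0$ being precisely the membership $\varphi\in N_T$.

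The main obstacle is the analysis of this overdetermined system, which is exactly Rosier's critical-length computation. Writing the general solution through the roots of the characteristic polynomial $\mu^3+(b-\lambda_l)\mu+\lambda=0$ and imposing the four homogeneous boundary conditions, one checks that a nontrivial solution can exist only when $b-\lambda_l>0$ and $R$ equals one of the critical lengths $2\pi\bigl(\frac{k^2+km+m^2}{3(b-\lambda_l)}\bigr)^{1/2}$, $k,m\in\mathbb N$ (for $\lambda_l\ge b$ the system forces $\varphi_l\equiv0$). Since \eqref{1.13} excludes exactly these values of $R$ for every $l$ with $\lambda_l<b$, all components vanish, whence $\varphi\equiv0$ --- a contradiction. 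Therefore $N_T=\{0\}$ and \eqref{2.67} holds. I expect this final ODE/number-theoretic step, together with the care needed to extract the eigenvector on the finite-dimensional invariant subspace, to be the delicate part; the compactness and the weak observability are routine.
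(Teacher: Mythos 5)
Your proof is correct, and its first half coincides with the paper's: the same weak observability identity (the paper's \eqref{2.68}, obtained there by multiplying the equation by $2(T-t)u$ and integrating over $Q_T$, which is the same computation as your energy-identity-plus-Fubini derivation), the same compactness of $\{Pu_{0n}\}$ in $L_2(Q_T)$ via the $L_2(0,T;\widetilde H^1)$ bound from \eqref{2.47} and a bound on $\partial_t Pu_{0n}$, and the same conclusion that a putative counterexample produces a limit $\widetilde u_0\neq 0$ with $\partial_x(P\widetilde u_0)\big|_{x=0}=0$. Where you diverge is the unique continuation step. The paper projects the time-dependent limit solution onto the eigenfunctions $\psi_l(y)$, obtaining for each $l$ a weak solution of the one-dimensional problem \eqref{2.73}, \eqref{2.74} with four homogeneous boundary conditions, and then cites the ``obvious generalization'' of Rosier's theorem from \cite{R} to get $v_{0l}\equiv 0$ under \eqref{1.13}. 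You instead unfold that cited theorem: you introduce the unobservable subspace $N_T$, prove it finite-dimensional, argue it is invariant under the generator, extract an eigenvector, and reduce to the overdetermined ODE governed by the critical lengths. This buys a self-contained argument at the price of re-proving Rosier's lemma, and the one genuinely delicate point you gloss over is the invariance step: for $w\in N_T$ one only gets $S(s)w\in N_{T-s}$, and $w$ need not lie in $D(A)$, so ``differentiating the identity in $t$'' is not immediately licit. The standard repair (as in \cite{R}) is to observe that $t\mapsto\dim N_t$ is a non-increasing integer-valued function, pass to a subinterval on which it is constant so that $S(s)$ maps $N_T$ into itself for small $s$, and only then differentiate on the resulting finite-dimensional semigroup. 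With that fix, and with the rescaling of Rosier's critical-length computation to the coefficient $b-\lambda_l$ (including the observation that no nontrivial solutions exist when $b-\lambda_l\le 0$, which is exactly the paper's parenthetical remark after \eqref{2.74}), your argument closes and proves the same statement.
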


\begin{proof}
In the smooth case multiplying \eqref{2.1} by $2(T-t)u(t,x,y)$ and integrating over $Q_T$ we find, that
$$
\iiint_{Q_T} u^2\,dxdydt -T\iint u_0^2\,dxdy +\iint_{B_T} (T-t)u_x^2\big|_{x=0}\,dydt =0,
$$
whence follows, that
\begin{equation}\label{2.68}
\iint u_0^2\,dxdy \leq \frac 1T \iiint_{Q_T} (Pu_0)^2\,dxdydt +
\iint_{B_T} \bigl(\partial_x(Pu_0)\big|_{x=0}\bigr)^2\,dydt.
\end{equation}
By continuity this estimate can be extended to any $u_0\in L_2$.

Now assume, that inequality \eqref{2.67} is not true. Then there exists a sequence $\{u_{0n}\in L_2\}_{n\in\mathbb N}$ such that
\begin{equation}\label{2.69}
\|u_{0n}\|_{L_2}=1\quad \forall\ n,\qquad \lim\limits_{n\to+\infty} 
\bigl\|\partial_x(Pu_{0n})\big|_{x=0}\bigr\|_{L_2(B_T)}=0.
\end{equation}
It follows from \eqref{2.47} that the sequence $\{Pu_{0n}\}$ is bounded in $L_2(0,T;H^1)$. Moreover, equality \eqref{2.1} provides that the sequence $\{\partial_t Pu_{0n}\}$ is bounded in $L_1(0,T;H^{-2})$ and the standard argument provides that $\{Pu_{0n}\}$ is precompact in $L_2(Q_T)$. Extract the subsequence $n'$, such that $\{Pu_{0n'}\}$ converges in $L_2(Q_T)$.  
It follows from \eqref{2.68}, \eqref{2.69} that $\{u_{0n'}\}$ converges in $L_2$ to a certain function $\widetilde u_0\in L_2$. Continuity of the operator $P$ and the second property \eqref{2.69} yield, that $P\widetilde u_0\in Y_0(Q_T)$ verifies $\partial_x(P\widetilde u_{0})\big|_{x=0}=0$. In particular, according to \eqref{2.45} for any function $\phi\in L_2(0,T;\widetilde H^2)$, $\phi_t, \phi_{xxx}, \phi_{xyy}\in L_2(Q_T)$, $\phi\big|_{t=T}=0$, $\phi\big|_{x=0}=\phi\big|_{x=R} =0$, the following equality holds:
\begin{equation}\label{2.70}
\iiint_{Q_T}P\widetilde u_0(\phi_t+b\phi_x+\phi_{xxx}+\phi_{xyy})\,dxdydt 
+\iint \widetilde u_0\phi\big|_{t=0}\,dxdy=0.
\end{equation}
For any natural $l$ let 
\begin{equation}\label{2.71}
v_l(t,x)\equiv \int_0^L (P\widetilde u_0)(t,x,y)\psi_l(y)\,dy,\quad
v_{0l}(x) \equiv \int_0^L \widetilde u_0(x,y)\psi_l(y)\,dy.
\end{equation}
Let $\vartheta(t,x)$ be an arbitrary function, such that $\vartheta\in L_2(0,T; H^3(0,R)\cap H_0^1(0,R))$, $\vartheta_t\in L_2((0,T)\times (0,R))$, $\vartheta\big|_{t=T}\equiv 0$. Choose $\phi(t,x,y)\equiv \vartheta(t,x)\psi_l(y)$, then it follows from \eqref{2.70}, \eqref{2.71}, that
\begin{equation}\label{2.72}
\iint_{(0,T)\times (0,R)} v_l\bigl(\vartheta_t+ (b-\lambda_l)\vartheta_x +\vartheta_{xxx}\bigr)\,dxdt +
\int_0^R v_{0l}\vartheta\big|_{t=0}\,dx =0.
\end{equation}
It means, that the function $v_l \in C([0,T];L_2(0,R))$, $v_{lx} \in C([0,R];L_2(0,T))$ is a weak solution in the rectangle $(0,T)\times (0,R)$ to an initial-boundary value problem
\begin{gather}\label{2.73}
v_t +(b-\lambda_l)v_x +v_{xxx}=0,\\ 
\label{2.74}
v\big|_{t=0}= v_{0l},\quad v\big|_{x=0}=v_x\big|_{x=0}=v\big|_{x=R}=v_x\big|_{x=R}=0.
\end{gather}
But the obvious generalization of results from \cite{R} (in that paper the case of the equation $v_t+v_x+v_{xxx}=0$ was considered) shows that under condition \eqref{1.13} (if $b-\lambda_l\leq 0$ there are no restrictions on $R$) $v_{0l}\equiv 0$ and, therefore, $\widetilde u_0\equiv 0$, which contradicts the fact, that $\|\widetilde u_0\|_{L_2}=1$.  
\end{proof}

\section{Existence of solutions}\label{S3}

Consider an auxiliary equation
\begin{equation}\label{3.1}
u_t+bu_x+u_{xxx}+u_{xyy}+(g(u))_x+(\psi(t,x,y)u)_x=f(t,x,y).
\end{equation}
The notion of a weak solution to problem \eqref{3.1}, \eqref{1.2}--\eqref{1.4} is similar to Definition~\ref{D1.1}.

\begin{lemma}\label{L3.1}
Let $g\in C^1(\mathbb R)$, $g(0)=0$, $|g'(u)|\leq c\ \forall u\in\mathbb R$, $\psi\in L_2(0,T;L_\infty)$, $u_0\in L_2$, $f\in L_1(0,T;L_2)$, $\mu_0=\nu_0\equiv 0$, $\nu_1\in L_2(B_T)$. Then  problem \eqref{3.1}, \eqref{1.2}--\eqref{1.4} has a unique weak solution $u\in X(Q_T)$.
\end{lemma}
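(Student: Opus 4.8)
The plan is to solve the problem by a contraction-mapping argument that treats the two extra terms $(g(u))_x$ and $(\psi u)_x$ as a perturbation of the purely linear problem already handled in Lemma~\ref{L2.11}. The decisive preliminary observation is that, since $g(0)=0$ and $|g'|\le c$, one has $|g(u)|\le c|u|$, so that for every $v\in X(Q_T)\subset C([0,T];L_2)$ both $g(v)$ and $\psi v$ lie in $L_2(Q_T)$: indeed $\|g(v)(\tau)\|_{L_2}\le c\|v(\tau)\|_{L_2}$, while $\|\psi(\tau)v(\tau)\|_{L_2}\le \|\psi(\tau)\|_{L_\infty}\|v(\tau)\|_{L_2}$ is square-integrable in $\tau$ because $\psi\in L_2(0,T;L_\infty)$ and $v\in C([0,T];L_2)$. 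Hence $(g(v))_x+(\psi v)_x$ is exactly of the form $f_{1x}$ with $f_1=-(g(v)+\psi v)\in L_2(Q_T)$, which is precisely the class of right-hand sides admitted by Lemma~\ref{L2.11}.

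I would then define a map $\Lambda$ sending $v\in X(Q_T)$ to the unique weak solution $u\in X(Q_T)$ of the linear problem \eqref{2.1}, \eqref{1.2}--\eqref{1.4} with $f_0=f$ and $f_1=-(g(v)+\psi v)$, furnished by Lemma~\ref{L2.11}; a fixed point of $\Lambda$ is exactly a weak solution of \eqref{3.1}, \eqref{1.2}--\eqref{1.4}. For two inputs $v_1,v_2$ the difference $w=\Lambda v_1-\Lambda v_2$ solves the same linear problem with zero initial and boundary data and with $f_1=-(g(v_1)-g(v_2))-\psi(v_1-v_2)$, so estimate \eqref{2.46} on a short interval $[0,t]$ gives
\begin{equation*}
\|w\|_{X(Q_t)}\le c(T,b,R)\Bigl(c\,t^{1/2}+\Bigl(\int_0^t\|\psi(\tau)\|_{L_\infty}^2\,d\tau\Bigr)^{1/2}\Bigr)\|v_1-v_2\|_{X(Q_t)},
\end{equation*}
using the Lipschitz bound $\|g(v_1)-g(v_2)\|_{L_2}\le c\|v_1-v_2\|_{L_2}$ together with the Cauchy--Schwarz estimate $\|\psi(v_1-v_2)\|_{L_2(Q_t)}\le(\int_0^t\|\psi\|_{L_\infty}^2)^{1/2}\|v_1-v_2\|_{C([0,t];L_2)}$ and $\|v_1-v_2\|_{C([0,t];L_2)}\le\|v_1-v_2\|_{X(Q_t)}$. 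The prefactor tends to $0$ as $t\to+0$, so there is $t_*>0$ on which $\Lambda$ is a contraction, yielding a unique solution on $[0,t_*]$.

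The step length $t_*$ depends only on $c$ and on the smallness of $\int\|\psi\|_{L_\infty}^2$, not on the size of the data: this is the crucial consequence of the \emph{global} Lipschitz hypothesis on $g$, which rules out finite-time blow-up. To reach the full interval $[0,T]$ I would fix a finite partition $0=\tau_0<\tau_1<\dots<\tau_N=T$ such that on each $[\tau_{i-1},\tau_i]$ both $c^2(\tau_i-\tau_{i-1})$ and $\int_{\tau_{i-1}}^{\tau_i}\|\psi\|_{L_\infty}^2\,d\tau$ are small enough to make the corresponding prefactor (with the uniform constant $c(T,b,R)$ of \eqref{2.46}) strictly less than $1$; such a partition exists since $t\mapsto\int_0^t\|\psi\|_{L_\infty}^2$ is continuous and finite. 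Solving successively on each subinterval with initial datum $u(\tau_{i-1},\cdot,\cdot)\in L_2$ (available because $u\in C([0,\tau_{i-1}];L_2)$) and gluing the pieces produces $u\in X(Q_T)$. Uniqueness is inherited from the contraction on each step, or equivalently from applying \eqref{2.46} to the difference of two solutions, which satisfies the homogeneous linear problem, followed by a Gronwall argument forcing it to vanish.

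The main obstacle I anticipate is the treatment of the term $\psi u$: because $\psi$ belongs only to $L_2(0,T;L_\infty)$, the contraction factor is controlled by $\int_0^t\|\psi\|_{L_\infty}^2$ rather than by a power of $t$, so the admissible local existence time is dictated by the distribution in time of the $L_\infty$-mass of $\psi$. The point requiring care is therefore that $[0,T]$ can be covered by finitely many intervals of contraction, which is exactly what finiteness of $\int_0^T\|\psi\|_{L_\infty}^2$ guarantees; everything else reduces to the already-established linear estimate \eqref{2.46}.
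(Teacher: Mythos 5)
Your proposal is correct and follows essentially the same route as the paper: a contraction mapping based on Lemma~\ref{L2.11}, writing $(g(v))_x+(\psi v)_x$ as $f_{1x}$ with $f_1=-(g(v)+\psi v)\in L_2(Q_T)$, obtaining the contraction factor $c\bigl(t^{1/2}+(\int_0^t\|\psi\|_{L_\infty}^2\,d\tau)^{1/2}\bigr)$ from \eqref{2.46}, and extending to $[0,T]$ by finitely many steps of uniform admissible length thanks to the global Lipschitz bound on $g$. The paper compresses the continuation into ``the standard argument,'' which you have merely spelled out.
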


\begin{proof}
We apply the contraction principle. For $t_0\in(0,T]$ define a mapping $\Lambda$ on $X(Q_{t_0})$ as follows: $u=\Lambda v\in X(Q_{t_0})$ is a weak solution to a linear problem
\begin{equation}\label{3.2}
u_t+bu_x+u_{xxx}+u_{xyy} =f-(g(v))_x -(\psi v)_x
\end{equation}
in $Q_{t_0}$ with initial and boundary conditions \eqref{1.2}--\eqref{1.4}.

Since 
\begin{gather*}
\|g(v)\|_{L_2(Q_{t_0})}\leq 
c\|v\|_{C([0,t_0];L_2)}<\infty, \\
\|\psi v\|_{L_2(Q_{t_0})}\leq c\|\psi\|_{L_2(0,t_0;L_\infty)}
\|v\|_{C([0,t_0];L_2)}<\infty,
\end{gather*}
Lemma~\ref{L2.11} provides that the mapping $\Lambda$ exists. Moreover, for functions $v,\widetilde{v}\in X(Q_{t_0})$
\begin{equation*}
\|g(v)-g(\widetilde{v})\|_{L_2(Q_{t_0})}\leq c\|v-\widetilde{v}\|_{L_2(Q_{t_0})}  \leq ct_0^{1/2}\|v-\widetilde{v}\|_{C([0,t_0];L_2)},
\end{equation*}
$$
\|\psi(v-\widetilde{v})\|_{L_2(Q_{t_0})} 
\leq c\|\psi\|_{L_2(0,t_0;L_\infty)} \|v-\widetilde{v}\|_{C([0,t_0];L_2)}.
$$
As a result, according to inequality \eqref{2.46}
\begin{equation*}
\|\Lambda v-\Lambda\widetilde{v}\|_{X(Q_{t_0})}\leq
c(T)\omega(t_0)\|v-\widetilde{v}\|_{X(Q_{t_0})},
\end{equation*}
where $\omega(t_0)\to 0$ as $t_0\to +0$ and $\omega$ depends on the properties of continuity of the primitive of the function $\|\psi(t,\cdot,\cdot)\|_{L_\infty}^2$ on $[0,T]$.
Since the constant in the right side of this inequality is uniform with respect to $u_0$ and $f$, one can construct the solution on the whole time segment $[0,T]$ by the standard argument.
\end{proof}

Now we pass to the results of existence in Theorem~\ref{T1.1}.

\begin{proof}[Proof of Existence Part of Theorem~\ref{T1.1}]
First of all we make zero the boundary data in \eqref{1.3} for the function $u$ itself. Let
\begin{equation}\label{3.3}
\psi(t,x,y)\equiv J(t,x,y;\mu_0)\eta(3/2-2x/R)  + J(-t,R-x,y;\nu_{0-})\eta(2x/R-1/2),
\end{equation}
where $\nu_{0-}(t,y)\equiv \nu_0(-t,y)$, the functions $\mu_0$ and $\nu_0$ are extended to the whole strip $B$ in the class $H^{s/3,s}(B)$, such that $\mu_0\equiv 0$ for $t<-1$, $\nu_0\equiv 0$ for $t>T+1$ and the function $J(t,x,y;\mu)$ is the aforementioned in the proof of Lemma~\ref{L2.10} solution to an initial-boundary value problem in $\Pi_T^+ =(0,T)\times \Sigma_+$ for equation \eqref{2.1} in the case $f\equiv 0$ with zero initial condition \eqref{1.2} for $(x,y)\in \Sigma_+$, boundary condition \eqref{1.4} for $(t,x)\in (0,T)\times\mathbb R_+$ and boundary condition \eqref{2.41}, introduced in \cite{F17}.
Then the results of \cite{F17} provide, that
\begin{equation}\label{3.4}
\left\{
\begin{aligned}
&\widetilde\psi \equiv\psi_t +b\psi_x+\psi_{xxx}+\psi_{xyy} \in C^\infty(\overline{Q}_T), \\
&\psi\in Y_0(Q_T)\cap L_2(0,T;W^1_\infty),\\
&\psi\big|_{x=0}=\mu_0, \quad \psi\big|_{x=R}=\nu_0, \quad \|\psi_x\big|_{x=R}\|_{L_2(B_T)}\leq c\|\nu_0\|_{H^{1/3,1}(B_T)}.
\end{aligned}
\right.
\end{equation}
Consider a function
\begin{equation}\label{3.5}
U(t,x,y) \equiv u(t,x,y) -\psi(t,x,y).
\end{equation}
Then $u\in X(Q_T)$ is a weak solution to problem \eqref{1.1}--\eqref{1.4} iff $U\in X(Q_T)$ is a weak solution to an initial-boundary value problem in $Q_T$ for an equation
\begin{equation}\label{3.6}
U_t+bU_x+U_{xxx}+U_{xyy}+UU_x+(\psi U)_x=F\equiv f-\widetilde\psi- \psi\psi_x,
\end{equation}
with initial and boundary conditions
\begin{equation}\label{3.7}
U\big|_{t=0} = U_0\equiv u_0 -\psi\big|_{t=0},\quad U\big|_{x=0}= U\big|_{x=R} =0,\quad
 U_x\big|_{x=R}=V_1\equiv \nu_1-\psi_x\big|_{x=R}
\end{equation}
and the same boundary conditions on $(0,T)\times (0,R)$ as \eqref{1.4}. Note also that the functions $U_0$, $F$, $V_1$ satisfy the same assumptions as the corresponding functions $u_0$, $f$, $\nu_1$ in the hypothesis of the theorem.
 
For $h\in (0,1]$ consider a set of initial-boundary value problems in $Q_T$ for an equation
\begin{equation}\label{3.8}
U_t+bU_x+U_{xxx}+U_{xyy}+\left(g_h(U)\right)_x+(\psi U)_x = F
\end{equation}
with boundary conditions \eqref{1.4} and \eqref{3.7}.
\begin{equation*}
g_h(u)\equiv\int_0^u\Bigl[\theta\eta(2-h|\theta|)+\frac{2\sgn\theta}{h}\eta(h|\theta|-1)\Bigr]\,d\theta.
\end{equation*}
Note that $g_h(u)= u^2/2$ if $|u|\leq 1/h$, $|g'_h(u)|\leq 2/h\ \forall u\in\mathbb R$ and $|g'_h(u)|\leq 2|u|$ uniformly with respect to $h$.

According to Lemma~\ref{L3.1}, there exists a unique solution to this problem $U_h\in X(Q_T)$.

Next, establish appropriate estimates for functions $U_h$ uniform with respect to~$h$ (we drop the index $h$ in intermediate steps for simplicity). First, note that $g'(U)U_x, \psi U_x, \psi_x U, F \in L_1(0,T;L_2)$ and so the hypothesis of Lemma~\ref{L2.11} is satisfied (for $f_1\equiv 0$). Write down the analogue of equality \eqref{2.47} for $\rho\equiv 1$, then:
\begin{equation}\label{3.9}
\iint U^2 \,dxdy \leq \iint U_0^2\,dxdy 
+\int_0^t \!\!\iint \bigl(2F- 2(g(U))_x-\psi_x U \bigr)U\,dxdyd\tau.
\end{equation}
Since
\begin{equation}\label{3.10}
(g(U))_xU=\partial_x\Bigl(\int_0^U g'(\theta)\theta \,d\theta\Bigr) 
\end{equation}
we derive that
\begin{equation}\label{3.11}
\iint (g(U))_x U \,dxdy = 0.
\end{equation}
Therefore, since $\psi_x\in L_2(0,T;L_\infty)$ uniformly with respect to $h$
\begin{equation}\label{3.12}
\|u_h\|_{C([0,T];L_2)} \leq c.
\end{equation}

Next, equalities \eqref{2.47} and \eqref{3.10} provide that for $\rho(x)\equiv (1+x)$
\begin{multline}\label{3.13}
\iint U^2\,dxdy + \int_0^t\!\! \iint (3U_x^2 +U_y^2)\,dxdyd\tau 
\leq (1+R)\iint U_0^2\,dxdy  \\ +b \int_0^t \!\!\iint U^2\, dxdyd\tau
+(1+R)\iint_{B_t} V_1^2\,dyd\tau
+2\int_0^t \!\!\iint FU\rho\,dxdyd\tau \\ 
+\int_0^t \!\!\iint (\psi-\psi_x\rho)U^2\,dxdyd\tau
+2\int_0^t \!\!\iint \Bigl(\int_0^U g'(\theta)\theta \,d\theta \Bigr)  \,dxdyd\tau.
\end{multline}
Note that 
\begin{equation}\label{3.14}
\Bigl|\int_0^U g'(\theta)\theta \,d\theta \Bigr| \leq c|U|^3.
\end{equation}
Applying interpolating inequality \eqref{1.15} (here the exact value of the constant is indifferent), we obtain that
\begin{equation}\label{3.15}
\iint |U|^3\,dxdy \leq 
c\iint U^2\,dxdy \Bigl(\iint \bigl(|DU|^2 +U^2\bigr)\,dxdy\Bigr)^{1/2}
\end{equation}
Since the norm of the functions $u_h$ in the space $L_2$ is already  estimated in \eqref{3.12}, it follows from \eqref{3.13}--\eqref{3.15} that uniformly with respect to $h$
\begin{equation}\label{3.16}
\|u_h\|_{X(Q_T)} \leq c.
\end{equation}

From equation \eqref{3.8} itself, estimate \eqref{3.14} and the well-known embedding $L_1\subset H^{-2}$, it follows that uniformly with respect to $h$
\begin{equation}\label{3.17}
\|u_{ht}\|_{L_1(0,T;H^{-3})}\leq c.
\end{equation}

Estimates \eqref{3.16}, \eqref{3.17} by the standard argument provide existence of a weak solution to problem \eqref{1.1}--\eqref{1.4} $u\in L_\infty(0,T;L_2)\cap L_2(0,T;\widetilde H^1)$, as a limit of functions $u_h$ when
$h\to +0$.

Finally, since by virtue of \eqref{1.20} (here the exact value of the constant is again indifferent)
\begin{multline}\label{3.18}
\iiint_{Q_T} U^4\,dxdydt \leq c\int_0^T \|U(t,\cdot,\cdot)\|^2_{H^1} \|U(t,\cdot,\cdot)\|^2_{L_2}\,dt \\ \leq
c\|U\|^2_{L_2(0,T;H^1)}\|U\|^2_{L_\infty(0,T;L_2)} <\infty
\end{multline}
and
\begin{equation}\label{3.19}
\iiint_{Q_T} \psi^2 U^2\,dxdydt \leq \|\psi\|^2_{L_2(0,T;L_\infty)} \|U\|^2_{L_\infty(0,T;L_2)} <\infty,
\end{equation}
it follows from Lemma~\ref{L2.11} (where $f_1\equiv U^2/2 +\psi U$), that after possible modification on a set of zero measure $U\in C([0,T];L_2)$.
\end{proof}

Result on uniqueness and continuous dependence of weak solutions succeeds from the following theorem.

\begin{theorem}\label{T3.1}
For any $T>0$ and $M>0$ there exist constant $c=c(T,M,b,R,L)$, such that for any two weak solutions $u(t,x,y)$ and $\widetilde u(t,x,y)$ to problem \eqref{1.1}--\eqref{1.4}, satisfying $\|u\|_{X(Q_T)}, \|\widetilde u\|_{X(Q_T)} \leq M$, with corresponding data $u_0, \widetilde u_0\in L_2$, $\mu_0, \widetilde\mu_0 ,\nu_0, \widetilde\nu_0 \in \widetilde H^{1/3,1}(B_T)$, $\nu_1,\widetilde\nu_1 \in L_2(B_T)$ $f, \widetilde f\in L_1(0,T;L_2)$ the following inequality holds:
\begin{multline}\label{3.20}
\|u -\widetilde u\|_{X(Q_T)} \leq c\bigl( \|u_0 - \widetilde u_0\|_{L_2} +
\|\mu_0-\widetilde\mu_0\|_{H^{1/3,1}(B_T)} \\+
\|\nu_0-\widetilde\nu_0\|_{H^{1/3,1}(B_T)} +\|\nu_1-\widetilde\nu_1\|_{L_2(B_T)}+
\|f-\widetilde f\|_{L_1(0,T;L_2)}\bigr).
\end{multline} 
\end{theorem}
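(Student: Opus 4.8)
The plan is to mimic the existence proof: reduce to homogeneous boundary conditions, invoke the linear theory of Lemma~\ref{L2.11} together with the energy identity \eqref{2.47}, and close the resulting inequality by Gronwall's lemma. Since both $u,\widetilde u\in X(Q_T)$, the difference $w\equiv u-\widetilde u$ lies in $X(Q_T)$, so all integrals below are well defined. Subtracting the two weak formulations \eqref{1.8} and using $uu_x-\widetilde u\widetilde u_x=\tfrac12\bigl((u+\widetilde u)w\bigr)_x$, the function $w$ is a weak solution of the linear problem
\[
w_t+bw_x+w_{xxx}+w_{xyy}+\tfrac12\bigl((u+\widetilde u)w\bigr)_x=f-\widetilde f,
\]
with data $w_0=u_0-\widetilde u_0$, $w\big|_{x=0}=\mu_0-\widetilde\mu_0$, $w\big|_{x=R}=\nu_0-\widetilde\nu_0$, $w_x\big|_{x=R}=\nu_1-\widetilde\nu_1$, and the same boundary conditions on $(0,T)\times(0,R)$ as in \eqref{1.4}. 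The inequality \eqref{3.20} then delivers uniqueness and Lipschitz dependence at once by specializing to equal or differing data.

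First I would construct, exactly as in \eqref{3.3}--\eqref{3.4}, a boundary potential $\psi$ built from the differences $\mu_0-\widetilde\mu_0$ and $\nu_0-\widetilde\nu_0$, so that $\psi\in Y_0(Q_T)\cap L_2(0,T;W^1_\infty)$, $\psi\big|_{x=0}=\mu_0-\widetilde\mu_0$, $\psi\big|_{x=R}=\nu_0-\widetilde\nu_0$, with all norms controlled by $\|\mu_0-\widetilde\mu_0\|_{H^{1/3,1}(B_T)}+\|\nu_0-\widetilde\nu_0\|_{H^{1/3,1}(B_T)}$. Setting $W\equiv w-\psi$ and $s\equiv u+\widetilde u$ (so that $\|s\|_{X(Q_T)}\le 2M$), the function $W$ solves a problem of the type handled in Lemma~\ref{L2.11}: homogeneous Dirichlet data at $x=0,R$, with $W_x\big|_{x=R}=(\nu_1-\widetilde\nu_1)-\psi_x\big|_{x=R}\in L_2(B_T)$, forcing $f_0\equiv(f-\widetilde f)-\widetilde\psi\in L_1(0,T;L_2)$, and divergence part $f_1\equiv-\tfrac12 s(W+\psi)=-\tfrac12 sw\in L_2(Q_T)$, the membership of $f_1$ following from an $L_4\times L_4$ Hölder estimate together with \eqref{1.20}.

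I would then write the energy identity \eqref{2.47} for $W$ with $\rho(x)\equiv 1+x$. The data-dependent terms ($\iint W_0^2\rho$, $\rho(R)\iint_{B_t}(W_x\big|_{x=R})^2$, the $f_0$-term, and the $\tfrac12 s\psi$ contribution to $f_1$) are bounded routinely by the right-hand side of \eqref{3.20}, using $s\in L_\infty(0,T;L_2)$ and $\psi\in L_2(0,T;W^1_\infty)$. The main obstacle is the genuinely nonlinear term $\int_0^t\!\iint sW(W\rho)_x$ arising from the $\tfrac12 sW$ part of $f_1$. For this I would apply Hölder in the form $\|s\|_{L_4}\|W\|_{L_4}\|W_x\|_{L_2}$ (and analogously for the $W\rho'$ piece), then \eqref{1.20} to write $\|s\|_{L_4}\le c\|s\|_{H^1}^{1/2}\|s\|_{L_2}^{1/2}$ and $\|W\|_{L_4}\le c\|W\|_{H^1}^{1/2}\|W\|_{L_2}^{1/2}$, and finally Young's inequality (with exponents $4/3$ and $4$) to absorb the resulting factor $\|W\|_{H^1}^2$ — i.e. the $W_x^2$ and $W_y^2$ parts — into the dissipative term $\int_0^t\!\iint(3W_x^2+W_y^2)\rho'$ of \eqref{2.47}. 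The surviving contribution takes the form $c(M)\int_0^t\|s(\tau)\|_{H^1}^2\,\|W(\tau)\|_{L_2}^2\,d\tau$, whose weight $g(\tau)\equiv b+c(M)\|s(\tau)\|_{H^1}^2$ lies in $L_1(0,T)$ precisely because $s\in L_2(0,T;\widetilde H^1)$. Gronwall's inequality with this integrable coefficient then bounds $\|W\|_{X(Q_T)}$ by the data differences, and returning to $w=W+\psi$ while adding the estimate for $\psi$ yields \eqref{3.20}. The only delicate point is this absorption/Gronwall step; everything else reuses the linear machinery of the existence proof verbatim.
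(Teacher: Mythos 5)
Your proposal is correct and follows essentially the same route as the paper: subtract a boundary potential $\Psi$ built (by linearity, equivalently) from the differences of the boundary data, apply Lemma~\ref{L2.11} with $f_1\equiv-\tfrac12(u+\widetilde u)(U+\Psi)$, estimate the nonlinear term via H\"older and \eqref{1.20}, absorb the gradient part into the dissipation in \eqref{2.47} with $\rho\equiv1+x$, and close with Gronwall using the integrable weight coming from $u,\widetilde u\in L_2(0,T;\widetilde H^1)$. This matches the paper's \eqref{3.21}--\eqref{3.26} step for step.
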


\begin{proof}
Let the function $\psi$ is defined by formula \eqref{3.3}, the function $\widetilde\psi$ in a similar way for $\widetilde\mu_0$, $\widetilde\nu_0$ and $\Psi\equiv \psi-\widetilde\psi$. Then, in particular,
\begin{equation}\label{3.21}
\|\Psi\|_{X(Q_T)} \leq c\bigl(\|\mu_0-\widetilde\mu_0\|_{H^{1/3,1}(B_T)} +
\|\nu_0-\widetilde\nu_0\|_{H^{1/3,1}(B_T)}\bigr).
\end{equation}
Let $U_0\equiv u_0-\widetilde u_0 - \Psi\big|_{t=0}$, $F\equiv f-\widetilde f -(\Psi_t+b\Psi_x+\Psi_{xxx}+\Psi_{xyy})$, $V_1\equiv \nu_1-\widetilde\nu_1 -\Psi_x\big|_{x=R}$, then
\begin{equation}\label{3.22}
\|U_0\|_{L_2} \leq \|u_0-\widetilde u_0\|_{L_2} +
c\bigl(\|\mu_0-\widetilde\mu_0\|_{H^{1/3,1}(B_T)} + \|\nu_0-\widetilde\nu_0\|_{H^{1/3,1}(B_T)}\bigr), 
\end{equation}
\begin{multline}\label{3.23}
\|F\|_{L_1(0,T;L_2)} \leq \|f-\widetilde f\|_{L_1(0,T;L_2)} \\ +
c\bigl(\|\mu_0-\widetilde\mu_0\|_{H^{1/3,1}(B_T)} + \|\nu_0-\widetilde\nu_0\|_{H^{1/3,1}(B_T)}\bigr),
\end{multline}
\begin{equation}\label{3.24}
\|V_1\|_{L_2(B_T)} \leq \|\nu_1-\widetilde\nu_1\|_{L_2(B_T)} +
c\bigl(\|\mu_0-\widetilde\mu_0\|_{H^{1/3,1}(B_T)} + \|\nu_0-\widetilde\nu_0\|_{H^{1/3,1}(B_T)}\bigr).
\end{equation}
The function $U(t,x,y) \equiv u(t,x,y) -\widetilde u(t,x,y) -\Psi(t,x,y)$ is a weak solution to an initial-boundary value problem in $Q_T$ for an equation
$$
U_t+bU_x+U_{xxx}+U_{xyy} = F -(uu_x-\widetilde u \widetilde u_x)
$$
with initial and boundary conditions \eqref{1.4},
$$
U\big|_{t=0} =U_0,\qquad U\big|_{x=0}= U\big|_{x=R}=0, \qquad U_x\big|_{x=R}=V_1.
$$
Apply Lemma~\ref{L2.11} where $f_1\equiv -(u^2-\widetilde u^2)/2$. Note that similarly to \eqref{3.18} $f_1\in L_2(Q_T)$.
Therefore, we derive from \eqref{2.47} that for $t\in (0,T]$ and $\rho(x)\equiv (1+x)$
\begin{multline}\label{3.25}
\iint U^2\,dxdy +\int_0^t\!\! \iint (3U_x^2+U_y^2)\,dxdyd\tau \leq (1+R)\iint U_0^2\,dxdy  \\ 
+b\int_0^t\!\! \iint U^2\,dxdyd\tau + (1+R)\iint_{B_t} V_1^2\,dyd\tau + 2\int_0^t \!\!\iint FU\rho\, dxdyd\tau \\+  \int_0^t \!\!\iint (u^2-\widetilde u^2)(U\rho)_x\, dxdyd\tau
\end{multline}
Here $u^2-\widetilde u^2 = (u+\widetilde u)(U+\Psi)$ and by virtue of \eqref{1.20}
\begin{multline*}
\iint |u(U+\Psi)U_x|\,dxdy  \\ \leq c \Bigl(\iint u^4 \,dxdy \iint (U^4+\Psi^4) \,dxdy\Bigr)^{1/4} \Bigl(\iint U_x^2\,dxdy\Bigr)^{1/2} \\ \leq c_1 \|u\|^{1/2}_{H^1}\|u\|_{L_2}^{1/2}
\Bigl[\Bigl(\iint |DU|^2\,dxdy\Bigr)^{3/4} \Bigl(\iint U^2\,dxdy\Bigr)^{1/4}  +
\iint U^2\,dxdy \\+
\Bigl(\iint |DU|^2\,dxdy\Bigr)^{1/2}\|\Psi\|^{1/2}_{H^1}\|\Psi\|_{L_2}^{1/2}\Bigr]
\end{multline*}
and, therefore,
\begin{multline}\label{3.26}
\int_0^t\!\! \iint |u(U+\Psi)U_x|\,dxdyd\tau \leq
\varepsilon \int_0^t\!\! \iint |DU|^2\,dxdyd\tau + \int_0^t \|\Psi\|^2_{H^1}\,d\tau\\ + c(\varepsilon)
\int_0^t \gamma(\tau) \iint (U^2+\Psi^2)\,dxdyd\tau,
\end{multline}
where $\gamma(t)\equiv 1+\|u(t,\cdot,\cdot)\|^2_{H^1}\|u(t,\cdot,\cdot)\|^2_{L_2}\in L_1(0,T)$ and $\varepsilon>0$ can be chosen arbitrarily small. Then estimates \eqref{3.21}--\eqref{3.24}, \eqref{3.26} and inequality \eqref{3.25} provide the desired result.
\end{proof}

Finally, consider regular solutions.

\begin{lemma}\label{L3.3}
Let $g(u)\equiv u^2/2$, $\mu_0=\nu_0=\nu_1\equiv 0$, the functions $u_0$ and $f$ satisfy the hypothesis of Theorem~\ref{T1.2},  $\psi\in X^3(Q_T)$.  Then  problem \eqref{3.1}, \eqref{1.2}--\eqref{1.4} has a unique solution $u\in X^3(Q_T)$.
\end{lemma}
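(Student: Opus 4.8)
Since $X^3(Q_T)\subset X(Q_T)$, uniqueness is immediate from the energy estimate of Theorem~\ref{T3.1}, whose proof applies to \eqref{3.1} once the extra linear term $(\psi u)_x$ (with $\psi\in X^3\subset C([0,T];W^1_\infty)$) is carried along exactly as the $\psi$-terms already occurring there. So the task is existence, and the plan is to produce the solution by an approximation based on the linear theory of Section~\ref{S2}, the decisive ingredient being a global-in-time a priori bound in $X^3$.

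For the construction I would regularize as in the proof of Theorem~\ref{T1.1}, replacing $g(u)=u^2/2$ by the truncations $g_h$ introduced after \eqref{3.8}. As $g_h'$ is bounded, Lemma~\ref{L3.1} already yields a weak solution $u_h\in X(Q_T)$ of \eqref{3.1} with $g_h$ in place of $g$. Both the upgrade $u_h\in X^3(Q_T)$ and the bounds uniform in $h$ will come from the a priori estimates below; the compatibility of $u_0$ with the homogeneous boundary data required there is built into the hypothesis of Theorem~\ref{T1.2} (with $\mu_0=\nu_0=\nu_1\equiv0$ it forces $u_0|_{x=0}=u_0|_{x=R}=u_{0x}|_{x=R}=0$).

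The main obstacle is precisely the $h$-uniform bound $\|u_h\|_{X^3(Q_T)}\le c$ for the given (arbitrary) $T$: estimate \eqref{2.60} carries no smallness in time, so the quadratic nonlinearity cannot be absorbed by it over long intervals, and one must instead run the energy method directly, in the order of Lemmas~\ref{L2.11}--\ref{L2.15} but keeping the nonlinear terms, each step closing by Gronwall because the dangerous coefficient is already controlled by the preceding step. First comes the $X(Q_T)$ bound exactly as in \eqref{3.9}--\eqref{3.16}: the cancellation \eqref{3.11} removes $\iint(g(u))_xu$ and \eqref{1.15} closes the estimate, yielding in particular $u_h\in L_2(0,T;\widetilde H^1)$ uniformly, i.e. $\|u_{hx}\|_{L_2}^2\in L_1(0,T)$ uniformly. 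Differentiating \eqref{3.1} in $t$ and testing with $u_{ht}$ produces, after integration by parts, the nonlinear term $\tfrac12\iint u_{hx}u_{ht}^2$, which by \eqref{1.20} is at most $\varepsilon\|u_{ht}\|_{\widetilde H^1}^2+c(\varepsilon)\|u_{hx}\|_{L_2}^2\|u_{ht}\|_{L_2}^2$; the first summand is absorbed by the smoothing gain of the energy identity and the coefficient of the second lies in $L_1(0,T)$, so Gronwall gives $u_{ht}\in X(Q_T)$.

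The same mechanism, differentiating \eqref{3.1} once and twice in $y$ as in Lemmas~\ref{L2.12} and~\ref{L2.14}, bounds $u_{hy}$, $u_{hyy}$ and $u_{hyyy}$ in $X(Q_T)$: the top-order nonlinear contributions collapse, after integration by parts in $x$, to expressions such as $\iint u_{hx}u_{hyy}^2$, split by \eqref{1.20}, Lemma~\ref{L1.1} and \eqref{1.21} into a piece absorbed by the smoothing gain and a piece with an $L_1(0,T)$ coefficient drawn from the already established lower-order bounds, whence Gronwall closes. Recovering $u_{hxx},u_{hxxx},u_{hxxxx}$ algebraically from \eqref{3.1}, exactly as the $x$-derivatives are recovered in \eqref{2.55}--\eqref{2.59} (via Lemma~\ref{L1.2} and identities like $\iint u_{xy}^2=\iint u_{xx}u_{yy}$), then completes the uniform $X^3$ bound; throughout, the $\psi$-terms are linear in $u$ with coefficients in $C([0,T];W^1_\infty)\cap L_2(0,T;\widetilde H^4)$ and enter only as harmless lower-order contributions. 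Finally I would pass to the limit $h\to0$, the uniform $X^3$ bounds together with standard compactness being enough to identify the quadratic term and to obtain the solution $u\in X^3(Q_T)$.
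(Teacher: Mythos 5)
Your uniqueness argument and your chain of higher-order energy estimates are essentially the ones the paper uses for its a priori bound (the order of the steps differs slightly -- the paper does $u_y$ before $u_t$, then $X^2$, then $X^3$ -- but each step closes by Gronwall off the previous ones in either order, and your identification of the key terms $\iint(u-u_x\rho)u_t^2$, $\iint u_{x}u_{yy}^2$, etc.\ is right). The gap is in the existence construction. You propose to take the weak solutions $u_h\in X(Q_T)$ of the truncated problems supplied by Lemma~\ref{L3.1} and assert that ``both the upgrade $u_h\in X^3(Q_T)$ and the bounds uniform in $h$ will come from the a priori estimates below.'' A priori estimates cannot do the first of these jobs: multiplying the equation by $u_{hyyyy}\rho$, or differentiating it in $t$ and testing with $u_{ht}\rho$, is a formal manipulation that is only legitimate once $u_h$ is already known to have the corresponding regularity. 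The truncation $g_h$ caps the growth of the nonlinearity (which is what makes the $L_2$-level contraction of Lemma~\ref{L3.1} close globally), but it does not smooth the solution, so nothing in your construction ever produces an object on which the higher-order identities may be performed. This is precisely why the paper structures the proof differently: it first proves \emph{local} existence directly in $X^3(Q_{t_0})$ by a contraction argument built on the linear $X^3$ theory of Lemma~\ref{L2.15} and the bilinear estimates \eqref{3.27}--\eqref{3.36} (no truncation at all), and only then runs the energy chain as an \emph{a priori} bound \eqref{3.39} for solutions already known to lie in $X^3(Q_{T'})$, which legitimizes treating $f-uu_x-(\psi u)_x$ as an admissible forcing in the linear Lemmas~\ref{L2.12}--\ref{L2.15}; the global solution is then obtained by continuation with a step of fixed length.

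To repair your argument you would need to insert exactly such a local well-posedness step for the truncated problem in $X^3$ (at which point the truncation becomes superfluous and you have reproduced the paper's proof), or else replace $g_h$ by a regularization of the \emph{equation} (e.g.\ parabolic) for which genuinely smooth solutions exist and for which the uniform estimates survive the limit. A secondary, fixable point: even granting uniform $X^3$ bounds on $u_h$, weak-$*$ compactness only delivers $\partial_t^ju\in L_\infty(0,T;\widetilde H^{3-3j})$, and recovering the strong continuity $C([0,T];\widetilde H^{3-3j})$ required by the definition of $X^3(Q_T)$ needs an extra argument (in the paper it is automatic because the solution is produced by the linear solution operator of Lemma~\ref{L2.15} applied to a forcing in the correct class).
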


\begin{proof}
For $t_0\in(0,T]$, $v\in X^3(Q_{t_0})$ let $u=\Lambda v\in X^3(Q_{t_0})$ be a solution to a linear problem \eqref{3.2} (for $g(v)\equiv v^2/2$), \eqref{1.2}--\eqref{1.4}.

Apply Lemma~\ref{L2.15}. We have:
\begin{multline}\label{3.27}
\|vv_x+\psi v_x+\psi_x v\|_{C[0,t_0];L_2)} \leq
\|u_0u_{0x}+\psi\big|_{t=0}u_{0x}+\psi_x\big|_{t=0}u_0\|_{L_2} \\ +
\|(vv_x)_t+(\psi v)_{tx}\|_{L_1(0,t_0;L_2)}
\end{multline}
and with the use of \eqref{1.21} derive that 
\begin{gather}\label{3.28}
\|u_0u_{0x}\|_{L_2} \leq
c\|u_0\|_{L_{\infty}}\|u_{0x}\|_{L_2} \leq
c_1\|u_0\|_{\widetilde H^3}^2, \\
\label{3.29}
\|\psi\big|_{t=0}u_{0x}+\psi_x\big|_{t=0}u_0\|_{L_2} \leq
c\|\psi\big|_{t=0}\|_{H^1}\|u_0\|_{W^1_\infty} \leq c\|\psi\|_{X^3(Q_T)}
\|u_0\|_{\widetilde H^3};
\end{gather}
next,
\begin{equation}\label{3.30}
\|vv_{tx}\|_{L_1(0,t_0;L_2)} \leq 
\int_0^{t_0} \|v\|_{L_\infty}\|v_{tx}\|_{L_2}\,dt \\ \leq
ct_0^{1/2}\|v\|_{X^2(Q_{t_0})}
\|v\|_{X^3(Q_{t_0})},
\end{equation}
\begin{equation}\label{3.31}
\|v_xv_{t}\|_{L_1(0,t_0;L_2)} \leq
\int_0^{t_0} \|v_x\|_{L_4}\|v_t\|_{L_4}\,dt  \leq
ct_0^{1/2}\|v\|_{X^2(Q_{t_0})}
\|v\|_{X^3(Q_{t_0})}
\end{equation}
and similarly
\begin{equation}\label{3.32}
\|(\psi v)_{tx}\|_{L_1(0,t_0;L_2)} \leq 
ct_0^{1/2}\|\psi\|_{X^3(Q_{T})}
\|v\|_{X^3(Q_{t_0})}.
\end{equation}
Next,
\begin{equation}\label{3.33}
\|vv_{t}\|_{L_2(Q_{t_0})} \leq 
\Bigl(\int_0^{t_0} \|v\|^2_{L_\infty}\|v_{t}\|^2_{L_2}\,dt\Bigr)^{1/2}  \leq
ct_0^{1/2}\|v\|_{X^2(Q_{t_0})}
\|v\|_{X^3(Q_{t_0})},
\end{equation}
$(vv_x)_{yy} = vv_{xyy} +2v_yv_{xy}+v_xv_{yy}$, where similarly to \eqref{3.33}
\begin{equation}\label{3.34}
\|vv_{xyy}\|_{L_2(Q_{t_0})} \leq 
ct_0^{1/2}\|v\|_{X^2(Q_{t_0})}
\|v\|_{X^3(Q_{t_0})},
\end{equation}
\begin{equation}\label{3.35}
\|v_yv_{xy}\|_{L_2(Q_{t_0})} \leq 
\Bigl(\int_0^{t_0}  \|v_y\|^2_{L_4}\|v_{xy}\|^2_{L_4}\,dt \Bigr)^{1/2} \leq
ct_0^{1/2}\|v\|_{X^2(Q_{t_0})}
\|v\|_{X^3(Q_{t_0})}
\end{equation}
and similar estimate holds for $v_xv_{yy}$. Finally, similarly to \eqref{3.33}--\eqref{3.35}
\begin{equation}\label{3.36}
\|(\psi v)_t\|_{L_2(Q_{t_0})}+
\|(\psi v)_{xyy}\|_{L_2(Q_{t_0})} \\ \leq 
ct_0^{1/2}\|\psi\|_{X^3(Q_{T})}
\|v\|_{X^3(Q_{t_0})}.
\end{equation}
Moreover, the assumptions on the function $\psi$ ensure that the corresponding boundary conditions on the function $vv_x+(\psi v)_x$ are satisfied for $y=0$ and $y=L$. Therefore, the mapping $\Lambda$ exists and one can use estimate \eqref{2.60} to derive inequalities
\begin{equation}\label{3.37}
\|\Lambda v\|_{X^3(Q_{t_0})}\leq 
\widetilde c + ct_0^{1/2}\bigl(\|\psi\|_{X^3(Q_{T})}\|v\|_{X^3(Q_{t_0})}  
+\|v\|_{X^3(Q_{t_0})}^2\bigr),
\end{equation}
\begin{multline}\label{3.38}
\|\Lambda v - \Lambda\widetilde v\|_{X^3(Q_{t_0})}\leq ct_0^{1/2}
\Bigl(\|\psi\|_{X^3(Q_{T})}\|v-\widetilde v\|_{X^3(Q_{t_0})}  \\ 
+\bigl(\|v\|_{X^3(Q_{t_0})}+
\|\widetilde v\|_{X^3(Q_{t_0})}\bigr)
\|v-\widetilde v\|_{X^3(Q_{t_0})} \Bigr),
\end{multline}
where the constant $c$ depends on the parameters $T,b,R,L$ and the constant$\widetilde c$ also on the properties of functions $u_0$, $f$, $\psi$. Hence, existence of the unique solution to the considered problem in the space 
$X^3(Q_{t_0})$ on the time interval $[0,t_0]$, depending on 
$\|u_0\|_{\widetilde H^3}$, follows by the standard argument.

Now establish the following a priori estimate: if $u\in X^3(Q_{T'})$ is a solution to the considered problem for some $T'\in (0,T]$, then
\begin{equation}\label{3.39}
\|u\|_{X^3(Q_{T'})}  \leq c,
\end{equation}
where the constant $c$ depends on $T,b,R,L$ and the properties of the functions $u_0$, $f$, $\psi$ from the hypothesis of the present lemma.

It is already known, that (see \eqref{3.16})
\begin{equation}\label{3.40}
\|u\|_{X(Q_{T'})} \leq c.
\end{equation}

Apply Lemma~\ref{L2.12}, then by virtue of \eqref{2.51} for $\rho(x)\equiv 1+x$
\begin{multline}\label{3.41}
\iint u_y^2\,dxdy + \int_0^t\!\! \iint |Du_y|^2\,dxdyd\tau  \leq 
(1+R)\iint u_{0y}^2\,dxdy  \\
+ b\int_0^t\!\! \iint u_y^2 \, dxdyd\tau -
2\int_0^t\!\! \iint \bigl(f-uu_x-(\psi u)_x\bigr) u_{yy}\rho\,dxdyd\tau.
\end{multline}
Here for arbitrary $\varepsilon>0$
\begin{multline}\label{3.42}
2\int_0^t\!\!\iint uu_x u_{yy}\rho\,dxdyd\tau = \int_0^t\!\!\iint (u-u_x\rho)u_y^2\,dxdyd\tau \\ \leq
c\int_0^t \Bigl(\iint (u_x^2+u^2)\,dxdy \iint u_y^4\,dxdy\Bigr)^{1/2}\,d\tau \leq 
\varepsilon\int_0^t\!\! \iint \bigl(|Du_y|^2+u_y^2\bigr)\,dxdyd\tau  \\+
c(\varepsilon)\int_0^t \gamma(\tau)\iint u_y^2\,dxdy\,d\tau,
\end{multline}
where $\gamma\equiv\|u(t,\cdot,\cdot)\|^2_{\widetilde H^1}\in L_1(0,T')$,
\begin{multline}\label{3.43}
2\int_0^t\!\! \iint (\psi u)_x u_{yy}\rho\,dxdyd\tau  \\ \leq 
\sup\limits_{t\in [0,T]} \|\psi(t,\cdot,\cdot)\|_{W^1_\infty}
\int_0^t\!\!\Bigl(\iint u_{yy}^2\,dxdy \iint (u_x^2+u^2)\,dxdy\Bigr)^{1/2}\,d\tau\\
\leq  \varepsilon \int_0^t\!\!\iint u_{yy}^2\,dxdyd\tau +
c(\varepsilon)\|\psi\|^2_{X^3(Q_T)}\|u\|^2_{X(Q_{T'})}.
\end{multline}
Therefore, inequality \eqref{3.41} yields that
\begin{equation}\label{3.44}
\|u_y\|_{C([0,T'];L_2)} + \bigl\||Du_y|\bigr\|_{L_2(Q_{T'})} \leq c.
\end{equation}

Next, since the hypothesis of Lemma~\ref{L2.13} is fulfilled, write down the corresponding analogue of equality \eqref{2.47} for the function $u_t$ and $\rho(x)\equiv 1+x$:
\begin{multline}\label{3.45}
\iint u_t^2\,dxdy + \int_0^t \!\! \iint (3u_{tx}^2+u_y^2)\,dxdyd\tau \\ \leq
(1+R)\iint \bigl(f-bu_x-u_{xxx}-u_{xyy}-uu_x-(\psi u)_x\bigr)^2\big|_{t=0}\,dxdy +b\int_0^t \!\! \iint u_t^2\,dxdyd\tau \\ + 2\int_0^t \!\! \iint f_t u_t\rho\,dxdyd\tau  
+2\int_0^t \!\! \iint \bigl(uu_t+(\psi u)_t\bigr)(u_t\rho)_x\,dxdyd\tau.
\end{multline}
Here similarly to \eqref{3.42}, \eqref{3.43} for arbitrary $\varepsilon>0$
\begin{multline*}
2\int_0^t\!\! \iint uu_t(u_t\rho)_x\,dxdyd\tau = \int_0^t\!\! \iint (u-u_x\rho)u_t^2\,dxdyd\tau  \\\leq 
\varepsilon\int_0^t\!\! \iint \bigl(|Du_t|^2+u_t^2\bigr)\,dxdyd\tau  +
c(\varepsilon)\int_0^t \gamma(\tau)\iint u^2_t\,dxdy\,d\tau,
\end{multline*}
where $\gamma\equiv\|u(t,\cdot,\cdot)\|^2_{\widetilde H^1}\in L_1(0,T')$,
\begin{multline*}
2\int_0^t\!\! \iint \psi_t u (u_t\rho)_x\,dxdyd\tau  \\ \leq 
c\int_0^t\!\!\Bigl(\iint (u_{tx}^2+u_t^2)\,dxdy\Bigr)^{1/2} 
\Bigl(\iint \psi_t^4\,dxdy \iint u^4\,dxdy\Bigr)^{1/4}\,d\tau\\ \leq
\varepsilon \int_0^t\!\!\iint (u_{tx}^2+u_t^2)\,dxdyd\tau
+c(\varepsilon)\|\psi_t\|^2_{X(Q_T)}\|u\|^2_{X(Q_{T'})},
\end{multline*}
and 
\begin{multline*}
2\int_0^t\!\! \iint \psi u_t (u_t\rho)_x\,dxdyd\tau = \int_0^t\!\! \iint(\psi-\psi_x\rho) u_t^2\,dxdyd\tau \\ \leq
c\|\psi\|_{X^3(Q_T)} \int_0^t\!\!\iint u_t^2\,dxdyd\tau.
\end{multline*}
Consequently, it follows from \eqref{3.45}, that
\begin{equation}\label{3.46}
\|u_t\|_{X(Q_{T'})} \leq c.
\end{equation}

Now apply Lemma~\ref{L2.14}, then inequality \eqref{2.52} and estimates \eqref{3.40}, \eqref{3.44} and \eqref{3.46} yield that for any $t\leq T'$ and$\rho(x)\equiv 1+x$
\begin{multline}\label{3.47}
\|u\|^2_{X^2(Q_t)} \leq c+
c\|uu_x\|^2_{C([0,t];L_2)}+c\|(\psi u)_x\|^2_{C([0,t];L_2)} \\ +
c \sup\limits_{\tau\in (0,t]}\Bigl|\int_0^\tau \!\! \iint \bigl(uu_x+(\psi u)_x\bigr)_{yy}u_{yy}\rho\,dxdyds\Bigr|.
\end{multline}
Uniformly with respect to $t\in [0,T']$ for arbitrary $\varepsilon>0$
\begin{gather*}
\|uu_x\|^2_{L_2} \leq c \|u\|^2_{\widetilde H^1}\|u_x\|^2_{L_4} \leq
\varepsilon \bigl\| |Du_x| \bigr\|^2_{L_2} +c(\varepsilon)\bigl(\|u_t\|^6_{X(Q_{T'})}+\|u\|^6_{X(Q_{T'})}+1\bigr),\\
\|(\psi u)_x\|^2_{L_2} \leq \|\psi\|^2_{W^1_\infty} \|u\|^2_{\widetilde H^1} \leq c\bigl(\|u_t\|^2_{X(Q_{T'})}+
\|u\|^2_{X(Q_{T'})}\bigr);
\end{gather*}
then,
\begin{equation*}
\iint (uu_x)_{yy}u_{yy}\rho\,dxdy = \frac12\iint (u_x\rho-u)u^2_{yy}\,dxdy +
2\iint u_yu_{xy}u_{yy}\rho\,dxdy,
\end{equation*}
where 
\begin{multline*}
\int_0^t\!\!\iint |u_yu_{xy}u_{yy}|\,dxdyd\tau  \leq  \sup\limits_{t\in [0,T']}\iint u_y^2\,dxdy 
\int_0^t\!\!\Bigl(\iint (u_{xy}^4+u_{yy}^4)\,dxdy\Bigr)^{1/2}d\tau \\ \leq
\varepsilon\int_0^t\!\! \iint |D^3 u|^2\,dxdyd\tau +
c(\varepsilon)\bigl(\|u_t\|^2_{X(Q_{T'})}+
\|u\|^2_{X(Q_{T'})}\bigr)\int_0^t \iint |D^2 u|^2\,dxdyd\tau,
\end{multline*}
\begin{multline*}
\int_0^t\!\!\iint |u-u_x\rho|u_{yy}^2\,dxdyd\tau  \leq c\sup\limits_{t\in [0,T']}\|u\|^2_{H^1} 
\int_0^t \Bigl(\iint u_{yy}^4\,dxdy\Bigr)^{1/2}\,d\tau \\ \leq 
\varepsilon\int_0^t\!\! \iint |Du_{yy}|^2\,dxdyd\tau  +
c(\varepsilon)\bigl(\|u_t\|^2_{X(Q_{T'})}+
\|u\|^2_{X(Q_{T'})}\bigr)\int_0^t \!\!\iint u_{yy}^2\,dxdy\,d\tau;
\end{multline*}
finally, $(\psi u)_{xyy} = \psi_{xyy}u +2\psi_{xy}u_y +\psi_{yy}u_x +\psi_xu_{yy} +2\psi_yu_{xy} +\psi u_{xyy}$, where
\begin{multline*}
\int_0^t\!\! \iint |\psi_{xyy}u u_{yy}|\,dxdtd\tau  \\ \leq \sup\limits_{t\in [0,T]}\|\psi_{xyy}\|_{L_2} 
\int_0^t \Bigl(\iint u^4\,dxdy \iint u_{yy}^4\,dxdy\Bigr)^{1/4}\,d\tau \\ \leq
\varepsilon\int_0^t\!\!\iint \Bigl(|D^3 u|^2+|D^2 u|^2\bigr)\,dxdyd\tau +
c(\varepsilon)\|\psi\|^2_{X^3(Q_T)}\|u\|^2_{X(Q_{T'})},
\end{multline*}
\begin{multline*}
\int_0^t\!\! \iint |\psi_{xy}u_y u_{yy}|\,dxdtd\tau  \\ \leq
\int_0^t \!\! \Bigl(\iint \psi_{xy}^4\,dxdy\Bigr)^{1/4} \Bigl(\iint u_y^2\,dxdy\Bigr)^{1/2} \Bigl(\iint u_{yy}^4\,dxdy\Bigr)^{1/4}\,d\tau \\ \leq
\varepsilon\int_0^t\!\! \iint \Bigl(|D^3 u|^2+|D^2 u|^2\bigr)\,dxdyd\tau +
c(\varepsilon)\|\psi\|^2_{X^3(Q_T)}\|u\|^2_{X(Q_{T'})}
\end{multline*}
and similar estimate holds for the integral of $\psi_{yy}u_xu_{yy}$. The rest integrals are estimated in an obvious way. 
As a result, it follows from \eqref{3.47} that
\begin{equation}\label{3.48}
\|u\|_{X^2(Q_{T'})} \leq c.
\end{equation}

Finally, apply Lemma~\ref{L2.15} on the basis of the already obtained estimates \eqref{3.46}, \eqref{3.48}, then inequality \eqref{2.60} and estimates \eqref{3.27}--\eqref{3.36} applied to $v\equiv u$  provide similarly to \eqref{3.37} that for any $t_0\in (0,T']$
\begin{equation*}
\|u\|_{X^3(Q_{t_0})}\leq 
\widetilde c + ct_0^{1/2}\bigl(\|\psi\|_{X^3(Q_{T})}+\|u\|_{X^2(Q_{T'})}\bigr)
\|u\|_{X^3(Q_{t_0})},
\end{equation*}
whence \eqref{3.39} follows.
\end{proof}

\begin{proof}[Proof of Theorem~\ref{T1.2}]
Let $\psi\in Y^3(Q_T)\subset X^3(Q_T)$ be the solution to problem \eqref{2.1}, \eqref{1.2}--\eqref{1.4} for $f\equiv 0$ (see Lemma~\ref{L2.12}). Introduce the function $U$ by formula \eqref{3.5} and consider problem \eqref{3.6}, \eqref{3.7}, \eqref{1.4} (here $\widetilde\psi\equiv 0$, $V_1\equiv 0$). Then the functions $\psi$, $F\sim f$ and $U_0\sim u_0$ satisfy the hypothesis of Lemma~\ref{L3.3} and the result is immediate.
\end{proof}

\section{Large-time decay of small solutions}\label{S4}

\begin{proof}[Proof of Theorem~\ref{T1.3}]
Consider the solution to problem \eqref{1.1}--\eqref{1.4} $u\in X(Q_T)\ \forall T$. Note that $u^2\in L_2(Q_T)$ (see, for example, \eqref{3.18}). Apply Lemma~\ref{L2.11}, then equality \eqref{2.47} for $f_1\equiv u^2/2$, 
$\rho\equiv 1$  and equality \eqref{3.10} for $g(u)\equiv u^2/2$ yield similarly to \eqref{3.12}, that
\begin{equation}\label{4.1}
\|u(t,\cdot,\cdot)\|^2_{L_2} \leq \|u_0\|^2_{L_2}+\|\nu_1\|^2_{L_2(B_+)}\leq \epsilon_0^2\quad \forall\ t\geq 0.
\end{equation}
Next, it follows from equality \eqref{2.47} for $\rho\equiv 1+x$, that
\begin{multline}\label{4.2}
\iint u^2\rho\,dxdy + \iint_{B_t} \mu_1^2\,dyd\tau +
\int_0^t \!\!\iint (3u_x^2+u_y^2-bu^2)\,dxdyd\tau \\
= \iint u_0^2\rho\,dxdy + (1+R)\iint_{B_t} \nu_1^2\,dyd\tau+ \frac{2}3 \int_0^t\!\!\iint u^3\,dxdyd\tau.
\end{multline} 
Since $u^3\in L_1(Q_T)$ equality \eqref{4.2} provides the following inequality in a differential form:
for a.e. $t>0$
\begin{equation}\label{4.3}
\frac{d}{dt}\iint u^2\rho\,dxdy + \iint (3u_x^2+u_y^2-bu^2)\,dxdy 
\leq (1+R)\int_0^L \nu_1^2\,dy+\frac{2}3 \iint u^3\,dxdy.
\end{equation} 
Next, we show that inequality \eqref{4.3} implies the following one:
\begin{multline}\label{4.4}
\frac{d}{dt}\iint u^2\rho\,dxdy +\frac{\varkappa}{1+R}\iint u^2\rho\,dxdy \\
+\delta\iint \Bigl[1-\frac1{\varepsilon_0}\|u(t,\cdot,\cdot)\|_{L_2}\Bigr](3u_x^2+u_y^2)\,dxdy \leq 
(1+R)\int_0^L \nu_1^2\,dy.
\end{multline}
where $\delta$, $\varkappa$ and $\epsilon_0$ are from the hypothesis of the theorem. First of all note, that in all cases inequality \eqref{1.17} implies, that
\begin{equation}\label{4.5}
\iint u_x^2\,dxdy \geq \frac {\pi^2}{R^2} \iint u^2\,dxdy.
\end{equation}
Further consider different cases separately. 

In the cases b) and d) it follows from inequality \eqref{4.5}, that 
\begin{equation}\label{4.6}
(1-\delta) \iint (3u_x^2+u_y^2)\,dxdy -b\iint u^2\,dxdy \geq \frac{\varkappa}{1+R}\iint u^2\rho\,dxdy.
\end{equation}
Moreover, by virtue of \eqref{1.15} and \eqref{4.5}
\begin{multline}\label{4.7}
\frac 23 \iint u^3\,dxdy \leq \frac {4R}{3\pi} \Bigl(\iint u_x^2\,dxdy\Bigr)^{3/4} \Bigl(\iint u_y^2\,dxdy\Bigr)^{1/4}\Bigl( \iint u^2\,dxdy\Bigr)^{1/2} \\ +
\frac {4R^{3/2}}{3L^{1/2}\pi^{3/2}} \iint u_x^2\,dxdy\Bigl( \iint u^2\,dxdy\Bigr)^{1/2} \leq
\frac{\delta}{\epsilon_0}\|u(t,\cdot,\cdot)\|_{L_2}\iint (3u_x^2+u_y^2)\,dxdy,
\end{multline} 
and \eqref{4.4} follows.

In the case a) we also use an inequality
\begin{equation}\label{4.8}
\iint u_y^2\, dxdy \geq \frac {\pi^2}{L^2} \iint u^2\,dxdy
\end{equation}
and, therefore, obtain \eqref{4.6} with the corresponding $\varkappa$.
Then we can alternatively derive, that either similarly to \eqref{4.7}
\begin{multline*}
\frac 23 \iint u^3\,dxdy \leq \frac {4R}{3\pi} \Bigl(\iint u_x^2\,dxdy\Bigr)^{3/4} \Bigl(\iint u_y^2\,dxdy\Bigr)^{1/4}\Bigl( \iint u^2\,dxdy\Bigr)^{1/2} \\ \leq
\frac{4R}{3^{7/4}\pi}\|u(t,\cdot,\cdot)\|_{L_2}\iint (3u_x^2+u_y^2)\,dxdy,
\end{multline*}
or
\begin{multline}\label{4.9}
\frac 23 \iint u^3\,dxdy \leq \frac {4L}{3\pi} \Bigl(\iint u_x^2\,dxdy\Bigr)^{1/4} \Bigl(\iint u_y^2\,dxdy\Bigr)^{3/4}\Bigl( \iint u^2\,dxdy\Bigr)^{1/2} \\ \leq
\frac{4L}{3^{5/4}\pi}\|u(t,\cdot,\cdot)\|_{L_2}\iint (3u_x^2+u_y^2)\,dxdy,
\end{multline}
whence \eqref{4.4} follows.

In the case c) inequality \eqref{4.8} must be substituted by the following one:
$$
\iint u_y^2\, dxdy \geq \frac {\pi^2}{4L^2} \iint u^2\,dxdy.
$$
Similar modification must be done in \eqref{4.9} and \eqref{4.4} in this case also follows.

Inequalities \eqref{4.1} and \eqref{4.4} imply, that
$$
\frac{d}{dt}\iint u^2\rho\,dxdy +\frac{\varkappa}{1+R}\iint u^2\rho\,dxdy \leq (1+R)\int_0^L \nu_1^2\,dy,
$$
whence \eqref{1.11} easily succeeds.
\end{proof}

\section{Boundary controllability}\label{S5}

First establish the result on boundary controllability for the linear equation. 

\begin{theorem}\label{T5.1}
Let condition \eqref{1.13} be satisfied for any natural $l$, such that $\lambda_l <b$.
Let $T>0$, $f\equiv 0$, $\mu_0=\nu_0\equiv 0$. Then for any $u_0, u_T \in L_2$ there exists a function $\nu_1\in L_2(B_T)$, such that there exists a unique solution $u\in Y_0(Q_T)$ to problem \eqref{2.1}, \eqref{1.2}--\eqref{1.4}, satisfying \eqref{1.12}.
\end{theorem}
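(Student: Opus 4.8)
The plan is to prove Theorem~\ref{T5.1} by the Hilbert Uniqueness Method, reducing exact controllability to the observability inequality already established in Lemma~\ref{L2.16}. By linearity I split the sought solution as $u=Pu_0+w$, where $Pu_0$ is the free evolution (all boundary data zero) and $w$ solves problem \eqref{2.1}, \eqref{1.2}--\eqref{1.4} with $u_0\equiv 0$, $\mu_0=\nu_0\equiv 0$, $f\equiv 0$ and control $\nu_1$. Since $Pu_0\in Y_0(Q_T)\subset C([0,T];L_2)$, the final state $(Pu_0)\big|_{t=T}\in L_2$ is well defined, and the requirement $u\big|_{t=T}=u_T$ becomes $w\big|_{t=T}=u_T-(Pu_0)\big|_{t=T}=:z$. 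Thus it suffices to show that the control-to-final-state map $L\colon L_2(B_T)\to L_2$, $L\nu_1\equiv w\big|_{t=T}$, is surjective; its boundedness follows from Lemma~\ref{L2.10} with $k=0$ together with the embedding $Y_0(Q_T)\subset C([0,T];L_2)$.

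By the standard duality criterion, $L$ is surjective provided $L^*$ is bounded below, i.e. $\|L^*g\|_{L_2(B_T)}\geq c\|g\|_{L_2}$ for all $g$. To identify $L^*$, for $g\in L_2$ I let $\phi=\phi_g$ be the solution of the backward adjoint problem $\phi_t+b\phi_x+\phi_{xxx}+\phi_{xyy}=0$ with $\phi\big|_{t=T}=g$, $\phi\big|_{x=0}=\phi_x\big|_{x=0}=\phi\big|_{x=R}=0$ and the matched boundary conditions \eqref{1.4} in $y$ --- precisely the adjoint problem appearing in the uniqueness argument of Lemma~\ref{L2.10}. Multiplying the equation for $w$ by $\phi$ and integrating over $Q_T$, all boundary contributions at $x=0$ and $x=R$ that involve the ill-controlled traces $w_{xx}\big|_{x=0}$, $w_{xx}\big|_{x=R}$, $w_x\big|_{x=0}$ are annihilated exactly by the three homogeneous conditions imposed on $\phi$, while the temporal boundary term reduces to $\iint w\big|_{t=T}\,g\,dxdy$ because $w\big|_{t=0}=0$. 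What survives is the identity $\iint w\big|_{t=T}\,g\,dxdy=\iint_{B_T}\nu_1\,\phi_x\big|_{x=R}\,dydt$, that is, $L^*g=\phi_x\big|_{x=R}$.

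Next I would unwind the adjoint problem by the reflection $(t,x)\mapsto(T-t,R-x)$. Setting $\tilde\phi(s,\xi,y)\equiv\phi(T-s,R-\xi,y)$, a direct computation shows that $\tilde\phi$ solves the forward equation \eqref{2.1} with $f\equiv 0$, initial datum $\tilde g(\xi,y)\equiv g(R-\xi,y)$, and all three boundary data at the $x$-ends equal to zero (the conditions $\phi\big|_{x=R}=0$, $\phi\big|_{x=0}=0$, $\phi_x\big|_{x=0}=0$ become $\tilde\phi\big|_{\xi=0}=0$, $\tilde\phi\big|_{\xi=R}=0$, $\tilde\phi_\xi\big|_{\xi=R}=0$); hence $\tilde\phi=P\tilde g$. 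Since $\phi_x\big|_{x=R}(t,y)=-\partial_x(P\tilde g)\big|_{x=0}(T-t,y)$ and both the time reversal and the reflection in $x$ preserve $L_2$ norms, I obtain $\|L^*g\|_{L_2(B_T)}=\|\partial_x(P\tilde g)\big|_{x=0}\|_{L_2(B_T)}$ and $\|\tilde g\|_{L_2}=\|g\|_{L_2}$. The observability inequality \eqref{2.67} (which holds precisely under hypothesis \eqref{1.13}) then yields $\|g\|_{L_2}=\|\tilde g\|_{L_2}\le c\,\|L^*g\|_{L_2(B_T)}$, so $L^*$ is bounded below and $L$ is onto. Choosing $\nu_1$ with $L\nu_1=z$ produces $u=Pu_0+w\in Y_0(Q_T)$ with $u\big|_{t=0}=u_0$ and $u\big|_{t=T}=u_T$; uniqueness of $u$ for this fixed $\nu_1$ is the content of Lemma~\ref{L2.10}.

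I expect the main obstacle to be the rigorous justification of the duality identity $L^*g=\phi_x\big|_{x=R}$: the adjoint solution carries the nonzero final value $\phi\big|_{t=T}=g$, so it is not an admissible test function in the weak formulation \eqref{2.45}, and the weak solution $w$ does not a priori possess the second-order $x$-traces that formally appear in the integration by parts. Both difficulties are resolved by the special choice of boundary conditions on $\phi$ (which removes every ill-defined trace of $w$ from the boundary integrals) combined with an approximation argument --- working first with smooth data $\nu_1$, $g$ and the regular solutions furnished by Lemma~\ref{L2.10}, then passing to the limit by means of the bounds of Lemma~\ref{L2.10} and estimate \eqref{2.66}.
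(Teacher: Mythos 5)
Your proposal is correct and follows essentially the same route as the paper: the paper also reduces to the case $u_0\equiv 0$ via $u=Pu_0+P_1\nu_1$, establishes the duality identity $\iint P_{1T}\nu_1\cdot\phi_0\,dxdy=\iint_{B_T}\nu_1\cdot\Lambda\phi_0\,dydt$ with $\Lambda\phi_0=\partial_x(\widetilde P\phi_0)\big|_{x=R}$ (your $L^\ast$), transfers the backward adjoint problem to the forward one by the reflection $(t,x,y)\mapsto(T-t,R-x,y)$, and invokes the observability inequality of Lemma~\ref{L2.16}. The only cosmetic difference is that the paper makes the surjectivity argument explicit by applying Lax--Milgram to the HUM operator $A=P_{1T}\circ\Lambda=LL^\ast$ and taking the control $\nu_1=\Lambda A^{-1}(u_T-Pu_0\big|_{t=T})$, whereas you invoke the abstract criterion that $L$ is onto when $L^\ast$ is bounded below --- the same argument in different packaging.
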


\begin{proof}
Assume first that $u_0\equiv 0$. In the case $\nu_1\in L_2(B_T)$, $u_0\equiv 0$, $\mu_0=\nu_0\equiv 0$, $f\equiv 0$ denote the solution $u\in Y_0(Q_T)$ to problem \eqref{2.1}, \eqref{1.2}--\eqref{1.4} by $P_1\nu_1$. Then Lemma~\ref{L2.10} provides, that $P_1$ is the linear bounded operator from $L_2(B_T)$ to $Y_0(Q_T)$. 

Let $P_{1T}\nu_1 \equiv P_1\nu_1\big|_{t=T}$, then $P_{1T}$ is the linear bounded operator from $L_2(B_T)$ to $L_2$.

Consider also the backward problem in $Q_T$
\begin{gather}\label{5.1}
\phi_t+b\phi_x+\phi_{xxx}+\phi_{xyy}=0,\\
\label{5.2}
\phi\big|_{t=T}=\phi_0(x,y), \quad \phi\big|_{x=0}=\phi_x\big|_{x=0}=\phi\big|_{x=R}=0
\end{gather}
with corresponding boundary conditions of \eqref{1.4} type, which after change of variables $(t,x,y)\to (T-t,R-x,y)$ transforms to the corresponding problem of \eqref{2.1}, \eqref{1.2}--\eqref{1.4} type. In particular, if we denote $\phi=\widetilde P\phi_0$, then $\widetilde P$ is the linear bounded operator from $L_2$ to $Y_0(Q_T)$. Moreover estimates 
\eqref{2.66}, \eqref{2.67} yield, that for $\Lambda\phi_0 \equiv \partial_x(\widetilde P\phi_0)\big|_{x=R}$
\begin{equation}\label{5.3}
\|\Lambda\phi_0\|_{L_2(B_T)} \leq \|\phi_0\|_{L_2}
\leq c \|\Lambda\phi_0\|_{L_2(B_T)}.
\end{equation}
In the smooth case multiplying equation \eqref{5.1} by $P_1\nu_1$ and integrating over $Q_T$ one can easily derive an equality
\begin{equation}\label{5.4}
\iint P_{1T}\nu_1\cdot\phi_0\,dxdy = \iint_{B_T} \nu_1\cdot\Lambda\phi_0\,dydt.
\end{equation}
By continuity this equality can be extended to the case $\nu_1\in L_2(B_T)$, $\phi_0\in L_2$.
Let $A \equiv P_{1T}\circ\Lambda$, then according to \eqref{5.3} and the aforementioned properties of the operator $P_{1T}$ the operator $A$ is bounded in $L_2$. Moreover, \eqref{5.3} and \eqref{5.4} provide, that
$$
(A\phi_0,\phi_0) = \iint (P_{1T}\circ\Lambda)\phi_0\cdot \phi_0\,dxdy
= \iint_{B_T} (\Lambda\phi_0)^2\,dydt \geq \frac 1{c^2} \|\phi_0\|^2_{L_2}.
$$
Application of Lax--Milgram theorem implies, that $A$ is invertible and $A^{-1}=\Lambda^{-1}\circ P_{1T}^{-1}$ is bounded in $L_2$. Let
\begin{equation}\label{5.5}
\Gamma \equiv \Lambda\circ A^{-1} = P_{1T}^{-1}
\end{equation}
(linear bounded operator from $L_2$ to $L_2(B_T)$), then $\nu_1\equiv \Gamma u_T$ and $u\equiv P_1\nu_1$ provide the desired solution in the case $u_0\equiv 0$.

In the general case the solution is given by the formula
\begin{equation}\label{5.6}
\nu_1\equiv \Gamma(u_T-Pu_0\big|_{t=T}),\quad u\equiv Pu_0 +P_1\nu_1
\end{equation}
(remind that $Pu_0$ is the solution to problem \eqref{2.1}, \eqref{1.2}--\eqref{1.4} for $\mu_0=\nu_0=\nu_1\equiv 0$, $f\equiv 0$).
\end{proof}

Now we can prove Theorem~\ref{T1.4}.

\begin{proof}[Proof of Theorem~\ref{T1.4}]
Consider first linear problem \eqref{2.1}, \eqref{1.2}--\eqref{1.4}. Let $u_0\equiv 0$, $\mu_0=\nu_0=\nu_1\equiv 0$, $f\equiv f_{1x}$, $f_1\in L_2(Q_T)$. Let $P_2f_1\in X(Q_T)$ be the solution to this problem, existing by virtue of Lemma~\ref{L2.11}. In particular, estimate \eqref{2.46} yields, that $P_2$ is the linear bounded operator from $L_2(Q_T)$ to $X(Q_T)$.

Obviously, a solution $\nu_1\in L_2(B_T)$, $u\in X(Q_T)$ to the controllability problem
\begin{gather*}
u_t+bu_x+u_{xxx}+u_{xyy}=f_{1x},\quad f_1\in L_2(Q_T),\\
u\big|_{t=0}=u_0\in L_2,\quad u\big|_{t=T}=u_T\in L_2, \quad 
u\big|_{x=0}=u\big|_{x=R}=0,\quad u_x\big|_{x=R}=\nu_1
\end{gather*}
is given by the formula
\begin{equation}\label{5.7}
\nu_1\equiv \Gamma\bigl(u_T-Pu_0\big|_{t=T}-P_2f_1\big|_{t=T}\bigr), \quad
u\equiv Pu_0 +P_1\nu_1 +P_2f_1.
\end{equation}

The solution to the original problem is constructed as a fixed point of the map
\begin{equation}\label{5.8}
u=\Theta v\equiv Pu_0 + (P_1\circ\Gamma)\bigl(u_T-Pu_0\big|_{t=T}+P_2(v^2/2)\big|_{t=T}\bigr) -P_2(v^2/2),
\end{equation}
defined on $X(Q_T)$. Similarly to \eqref{3.18}
\begin{gather*}
\|v^2\|_{L_2(Q_T)} \leq c\|v\|^2_{X(Q_T)}, \\
\|v^2-\widetilde v^2\|_{L_2(Q_T)} \leq c\bigl(\|v\|_{X(Q_T)}+\|\widetilde v\|_{X(Q_T)}\bigr)
\|v-\widetilde v\|_{X(Q_T)}.
\end{gather*}
Therefore,
\begin{gather*}
\|\Theta v\|_{X(Q_T)} \leq c\bigl(\|u_0\|_{L_2} + \|u_T\|_{L_2} + \|v\|^2_{X(Q_T)}\bigr),\\
\|\Theta v -\Theta \widetilde v\|_{X(Q_T)} \leq 
c\bigl(\|v\|_{X(Q_T)}+\|\widetilde v\|_{X(Q_T)}\bigr)
\|v-\widetilde v\|_{X(Q_T)}
\end{gather*}
and the standard contraction argument provides the desired result.
\end{proof}


\begin{thebibliography}{99}

\bibitem{BF13} E.~S.~Baykova and A.~V.~Faminskii, {\it On initial-boundary-value problems in a strip for the generalized two-dimensional Zakharov--Kuznetsov equation}, Adv. Differential Equ. {\bf 18} (2013), 663-686.

\bibitem{BL} H.~A.~Biagioni and F.~Linares, \textit{Well-posedness for the modified Zakharov--Kuznetsov equation}, Progr. Nonlinear Differential Equ. Appl. \textbf{54} (2003), 181--189.

\bibitem{BJM} E.~Bustamante, J.~Jimenez and J.~Mejia, \textit{The Zakharov--Kuznetsov equation in weighted Sobolev spaces}, J. Math. Anal. Appl. \textbf{433} (2016), 149--175.

\bibitem{DL} G.~G.~Doronin and N.~A.~Larkin, {\it Stabilization of regular solutions for the Zakharov--Kuznetsov equation posed on bounded rectangles and on a strip}, Proc. Edinburgh Math. Soc. {\bf 58} (2015), 661--682.

\bibitem{F89} A.~V.~Faminskii, {\it The Cauchy problem for quasilinear equations of odd order}, Mat. Sb. {\bf 180} (1989), 1183--1210. English transl. in Math. USSR-Sb. {\bf 68} (1991), 31--59.

\bibitem{F95} A.~V.~Faminski, {\it The Cauchy problem for the Zakharov--Kuznetsov equation}, Differ. Uravn. {\bf 31} (1995), 1070--1081. English transl. in Differential Equ. {\bf 31} (1995), 1002--1012.

\bibitem{F02} A .~V.~Faminskii, {\it On the mixed problem for quasilinear equations of the third order}, J. Math. Sci. {\bf 110} (2002), 2476--2507.

\bibitem{F07-1} A.~V.~Faminskii, {\it On the nonlocal well-posedness of a mixed problem for the Zakharov--Kuznetsov equation}, J. Math. Sci. {\bf 147} (2007), 6524--6537. 

\bibitem{F07-2} A.~V.~Faminskii, {\it Global well-posedness of two initial-boundary-value problems for the Korteweg--de~Vries equation}, Differential Integral Equ. {\bf 20} (2007), 601--642.

\bibitem{F08} A.~V.~Faminskii, {\it Well-posed initial-boundary value problems for the Zakharov--Kuznetsov equation}, Electronic J. Differential Equ. No.~127 (2008), 1--23.

\bibitem{FB08} A.~V.~Faminskii and I.~Yu.~Bashlykova, {\it Weak solutions to one initial-boundary value problem with three boundary conditions for quasilinear equations of the third order}, Ukrainian Math. Bull. {\bf 5} (2008), 83--98.

\bibitem{F12} A.~V.~Faminskii, {\it Weak solutions to initial-boundary-value problems for quasilinear evolution equations of an odd order}, Adv. Differential Equ. {\bf 17} (2012), 421--470.

\bibitem{F15-1} A.~V.~Faminskii, \textit{An initial-boundary value problem in a strip for two-dimensional Zakharov--Kuznetsov--Burgers equation}, Nonlinear Analysis \textbf{116} (2015), 132--144.

\bibitem{F15-2} A.~V.~Faminskii, {\it An initial-boundary value problem in a strip for two-dimensional equations of Zakharov--Kuznetsov type}, Contemp. Math. {\bf 653} (2015), 137--162. 

\bibitem{F17} A.~V.~Faminski, {\it Initial-boundary value problems in a half-strip for two-dimensional Zakharov--Kuznetsov equation}, arXiv: 1703.05660v1 [math.AP] 16 Mar 2017. 

\bibitem{FLP} L.~G.~Farah, F.~Linares and A.~Pastor, {\it A note on the 2D generalized Zakharov--Kuznetsov equation: local, global and scattering results}, J. Differential Equ. {\bf 253} (2012), 2558--2571.

\bibitem{FP} G.~Fonseca and M.~Panch\'on, \textit{Well-posedness for the two dimensional generalized Zakharov--Kuznetsov equation in anisotropic weighted Sobolev spaces}, J. Math. Anal. Appl. \textbf{443} (2016), 566--584.

\bibitem{G} A.~Gr\"unrock, \textit{On the generalized Zakharov--Kuznetsov equation at critical regularity}, arXiv: 1509.09146v1 [math.AP] 30 Sep 2015. 

\bibitem{GH} A.~Gr\"unrock and S.~Herr, \textit{The Fourier restriction norm method for the Zakharov--Kuznetsov equation}, Discrete Cont. Dyn. Syst. (A) \textbf{34} (2014), 2061--2068.

\bibitem{H-K} D.~Han-Kwan, \textit{From Vlasov--Poisson to Korteweg--de~Vries and Zakharov--Kuznetsov}, Comm. Math. Phys. \textbf{324} (2013), 961--993.

\bibitem{LSU} O.~A.~Ladyzhenskaya, V.~A.~Solonnikov and N.~N.~Uraltseva, {\it Linear and quasilinear equations of parabolic type}, Trans. of Math. Monogr. {\bf 23}, American Math. Soc.,  Providence, R.I., 1968.

\bibitem{LLS} D.~Lannes, F.~Linares and J.-C.~Saut, {\it The Cauchy problem for the Euler-Poisson system and derivation of the Zakharov--Kuznetsov equation}, Progress Nonlinear Differential Equ. Appl. {\bf 84} (2013), 183--215.

\bibitem{L13} N.~A.~Larkin, {\it Exponential decay of the $H^1$-norm for the 2D Zakharov--Kuznetsov equation}, J. Math. Anal. Appl. {\bf 405} (2013), 326--335.

\bibitem{L15} N.~A.~Larkin, \textit{The 2D Zakharov--Kuznetsov--Burgers equation with variable dissipation on a strip}, Electronic J. Differential Equ. (2015), no.~60, 1--20.

\bibitem{L16} N.~A.~Larkin, \textit{The 2D Zakharov--Kuznetsov--Burgers equation on a strip}, Bol. Soc. Parana Mat. (3) \textbf{34} (2016), 151--172.

\bibitem{LT} N.~A.~Larkin and E.~Tronco, {\it Regular solutions of the 2D Zakharov--Kuznetsov equation on a half-strip}, J. Differential Equ. {\bf 254} (2013), 81--101.

\bibitem{LP09} F.~Linares and A.~Pastor, {\it Well-posedness for the two-dimensional modified Zakharov--Kuznetsov equation}, SIAM J. Math. Anal. {\bf 41} (2009), 1323--1339.

\bibitem{LP11} F.~Linares and A.~Pastor, {\it Well-posedness for the 2D modified Zakharov--Kuznetsov equation}, J. Funct. Anal., {\bf 260} (2011), 1060--1085.

\bibitem{LPS} F.~Linares, A.~Pastor and J.-C.~Saut, {\it Well-posedness for the Zakharov--Kuznetsov equation in a cylinder and on the background of a KdV soliton}, Comm. Partial Differential Equ., {\bf 35} (2010), 1674--1689.

\bibitem{LM} J.-L.~ Lions and E.~ Magenes, \textit{Probl\`emes aux limites non homog\`enes
et applications}, Dunod, Paris, 1968.

\bibitem{MP} L.~Molinet and D.~Pilod, \textit{Bilinear Strichartz estimates for the Zakharov--Kuznetsov equation and applications}, Ann. Inst. H.~Poincare (C) Analyse Non Lin\'eaire \textbf{32} (2015), 347--371.

\bibitem{RV} F.~Ribaud and S.~Vento, {\it A note on the Cauchy problem for the 2D generalized Zakharov--Kuznetsov equation}, C. R. Acad. Sci. Paris {\bf 350} (2012), 499--503.

\bibitem{R} L.~Rosier, {\it Exact boundary controllability for the Korteweg--de~Vries equation on a bounded domain}, ESAIM: Control, Optimization Calculus Variations {\bf 2} (1997), 33--55.

\bibitem{S} J.-C.~Saut, {\it Sur quelques generalizations de l'equation de Korteweg--de~Vries}, J. Math. Pures Appl., {\bf 58} (1979), 21--61.

\bibitem{ST} J.-C.~Saut and R.~Temam, {\it An initial boundary value problem for the Zakharov--Kuznetsov equation}, Adv. Differential Equ., {\bf 15} (2010), 1001--1031.

\bibitem{STW} J.-C.~Saut, R.~Temam and C.~Wang, {\it An initial and boundary-value problem for the Zakharov--Kuznetsov equation in a bounded domain}, J. Math. Phys. {\bf 53} (2012), 115612.

\bibitem{ZK} V.E.~Zakharov and E.A.~Kuznetsov, {\it On three-dimensional solutions}, Zhurnal Eksp. Teoret. Fiz., {\bf 66} (1974), 594--597. English transl. in Soviet Phys. JETP, {\bf 39} (1974), 285--288.

\end{thebibliography}
\end{document}